\documentclass[11pt]{amsart}

\usepackage[margin=1.15in]{geometry}
\usepackage{amsmath,amssymb,amsthm,mathtools,bm,mathrsfs}
\usepackage{booktabs,array,longtable}
\usepackage{enumitem}
\usepackage{microtype}
\usepackage[hidelinks]{hyperref}
\usepackage{xurl}
\usepackage{orcidlink}

\allowdisplaybreaks[2]
\numberwithin{equation}{section}

\newtheorem{theorem}{Theorem}[section]
\newtheorem{lemma}[theorem]{Lemma}
\newtheorem{proposition}[theorem]{Proposition}
\newtheorem{corollary}[theorem]{Corollary}
\theoremstyle{definition}
\newtheorem{definition}[theorem]{Definition}
\theoremstyle{remark}
\newtheorem{remark}[theorem]{Remark}

\newcommand{\Ric}{\operatorname{Ric}}
\newcommand{\Kill}{\mathcal B}
\newcommand{\Bmix}{\mathsf{B}}
\newcommand{\Isom}{\operatorname{Isom}}
\newcommand{\Inn}{\operatorname{Inn}}
\newcommand{\Ad}{\operatorname{Ad}}
\newcommand{\tr}{\operatorname{tr}}
\newcommand{\cof}{\operatorname{cof}}
\newcommand{\Sym}{\operatorname{Sym}}
\newcommand{\rank}{\operatorname{rank}}
\newcommand{\diag}{\operatorname{diag}}

\newcommand{\Frob}{\mathrm F}
\newcommand{\Kop}{\mathscr K}
\newcommand{\Lop}{\mathscr L}
\newcommand{\Ccal}{\mathcal C}
\newcommand{\Hc}{\mathscr H}
\newcommand{\Th}{\vartheta}
\newcommand{\ip}[2]{\left\langle #1,#2\right\rangle}
\newcommand{\norm}[1]{\left\lVert #1\right\rVert}
\newcommand{\sym}{\operatorname{sym}}

\title[Left-invariant Einstein metrics on $S^3\times S^3$]
{A complete classification of left-invariant Einstein metrics on $S^3\times S^3$}

\author{Sixuan Gu}
\address{Institute of Mathematical Sciences, The Chinese University of Hong Kong, Hong Kong, China}
\email{sixuangu@link.cuhk.edu.cn}

\author[Wei Qi]{Wei Qi\,\orcidlink{0009-0004-5794-4907}}
\address{Department of Physics, The Ohio State University, Columbus, OH 43210, USA}
\email{qi.673@osu.edu}

\subjclass[2020]{Primary 53C25; Secondary 53C30}
\keywords{Einstein metric, left-invariant metric, compact Lie group,
  homogeneous geometry, symmetry enhancement}
\date{}

\begin{document}

\begin{abstract}
We complete the classification of compact simply connected
homogeneous Einstein manifolds in dimension six by resolving
the remaining cases for left-invariant Einstein metrics on
$G=\mathrm{SU}(2)\times\mathrm{SU}(2)\cong S^3\times S^3$.
Previous work leaves two cases for the isotropy group $K$,
namely $K=\{e\}$ and $K\cong\mathbb Z_2$.
We first rule out $K=\{e\}$. Then we show that any $\mathbb Z_2$ symmetry generated by an
inner involution $\sigma$ with $\tr\sigma=-2$ necessarily
extends to a $\mathbb Z_2\times\mathbb Z_2$ symmetry.
Together with the previously known classification results,
these theorems imply that every left-invariant Einstein metric
on $G$ is, up to homothety and isometry, either the standard
product metric $g_{\rm can}$ or the Jensen nearly K\"ahler
metric $g_{\rm NK}$.
\end{abstract}

\maketitle
\setcounter{tocdepth}{1}
{\small\tableofcontents}

\section{Introduction and main results}

The classification of compact simply connected homogeneous
Einstein manifolds in dimension six has been reduced to the
classification of left-invariant Einstein metrics on
$\mathrm{SU}(2)\times\mathrm{SU}(2)\cong S^3\times S^3$;
see \cite{NR99,NR,BCHL}.
In this paper we resolve the remaining cases,
thereby completing the classification.

\subsection{Geometric conventions}

Let $G=\mathrm{SU}(2)\times\mathrm{SU}(2)$ and
$\mathfrak g=\mathfrak{su}(2)\oplus\mathfrak{su}(2)$.
A left-invariant metric is determined by an inner product on
$\mathfrak g$, and it is Einstein when $\Ric_g=\lambda g$.
We classify metrics up to homothety and isometry: $g_1$ and $g_2$
are equivalent when $g_2=c\varphi^*g_1$ for some $c>0$ and
some diffeomorphism $\varphi:G\to G$.

Write $\Kill$ for the Killing form and equip each factor with
$\langle\cdot,\cdot\rangle_0=-\tfrac12\Kill$.
Fix a Lie-algebra isometry $\iota$ from the first factor to the second.
The standard product metric $g_{\rm can}$ and a representative of the
Jensen metric $g_{\rm NK}$ are specified at the identity by
\begin{align*}
(g_{\rm can})_e\bigl((X,Y),(X',Y')\bigr)
&=\langle X,X'\rangle_0+\langle Y,Y'\rangle_0,\\
(g_{\rm NK})_e\bigl((X,Y),(X',Y')\bigr)
&=2\langle X,X'\rangle_0+2\langle Y,Y'\rangle_0
-\langle\iota X,Y'\rangle_0-\langle Y,\iota X'\rangle_0.
\end{align*}
The second metric is the homogeneous nearly K\"ahler metric
\cite{BCHL, Jensen}. Its isometry class does not depend on $\iota$,
since any two choices differ by an inner automorphism.

The symmetry group used throughout the proof is the \emph{inner isotropy}
\begin{equation}\label{eq:inner-isotropy-definition}
K(g):=\Isom(G,g)\cap\Inn(G),
\end{equation}
where $\Inn(G)$ consists of the conjugations $x\mapsto axa^{-1}$.
Every such map fixes the identity. Its differential identifies
$\Inn(G)$ with $\Ad(G)\cong SO(3)\times SO(3)$, and $K(g)$
with the stabilizer of $g_e$ in this compact group. In particular,
$K(g)$ is a compact Lie group. It is the inner-automorphism part of
$\Isom(G,g)_e$, not necessarily the full isotropy group.

For $\sigma\in\Ad(G)$, $\tr\sigma$ denotes its trace on the
six-dimensional Lie algebra $\mathfrak g$. An \emph{inner involution}
is a nonidentity element satisfying $\sigma^2=I$.
A nonidentity involution in $SO(3)$ is a rotation by $\pi$, of trace
$-1$. Thus the nonidentity involutions in $SO(3)\times SO(3)$ have
trace $2$ or $-2$, according as one or both factors are nonidentity.
In the trace $-2$ case, the $+1$ and $-1$ eigenspaces have dimensions
two and four, respectively. Conjugating by $\Ad(G)$ corresponds to
an inner change of the oriented bases in the two factors.

\subsection{Previous results and the two remaining cases}

Previous work reduces the classification of left-invariant Einstein metrics on
\(\mathrm{SU}(2)\times\mathrm{SU}(2)\) to two residual cases. If the inner-isometry isotropy $K(g)$
has positive dimension, then \(K(g)\) contains a circle subgroup, and \cite[Theorem~1]{BCHL}, summarizing the result of \cite{NR}, implies that \(g\) is, up to homothety and isometry, either the standard product metric \(g_{\rm can}\) or the Jensen nearly K\"ahler metric \(g_{\rm NK}\). Belgun--Cort\'es--Haupt--Lindemann \cite{BCHL} proved the same conclusion when \(K(g)\) contains a nontrivial finite subgroup of \(\Ad(G)\) that is not isomorphic to \(\mathbb Z_2\); see \cite[Theorem~2]{BCHL}.

It therefore remains to understand the cases \(K(g)\cong\mathbb Z_2\) and \(K(g)=\{e\}\). For \(K(g)\cong\mathbb Z_2\), BCHL distinguish the two conjugacy classes of inner involutions by their traces. The trace \(2\) case is classified in \cite[Proposition~7]{BCHL}. In the trace \(-2\) case, they reduce the Einstein equation to an explicit polynomial system, but leave the final classification open; see \cite[Section~3.2]{BCHL}.
For trivial inner isotropy,
$
K(g)=\{e\},
$
we prove that this case cannot occur for a left-invariant Einstein metric. Together with the resolution of the trace $-2$ $\mathbb Z_2$ case, this completes the classification.

\subsection{Main theorems}

The first result supplies the missing symmetry automatically.
\begin{theorem}[Automatic inner symmetry]\label{thm:auto}
Every left-invariant Einstein metric on
$G=\mathrm{SU}(2)\times\mathrm{SU}(2)$ is preserved by a nonidentity inner automorphism.  Equivalently,
\[
\Ric_g=\lambda g
\qquad\Longrightarrow\qquad
K(g)\neq\{e\}.
\]
\end{theorem}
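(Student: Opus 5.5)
We use the standard parametrization of left-invariant metrics up to $\Ad(G)$. Write $\mathfrak g=\mathfrak{su}(2)\oplus\mathfrak{su}(2)$, with orthonormal bases $\{e_i\}$, $\{f_i\}$ for $\langle\cdot,\cdot\rangle_0$ satisfying $[e_i,e_j]=2\epsilon_{ijk}e_k$, and similarly for the $f_i$. A metric is then a positive $6\times6$ matrix
\[
g=\begin{pmatrix}A&C\\ C^{T}&B\end{pmatrix},
\]
with $A,B\in\Sym^+(3)$ and $C\in\mathbb R^{3\times3}$. The group $SO(3)\times SO(3)$ acts by $(P,Q)\cdot(A,B,C)=(PAP^T,QBQ^T,PCQ^T)$. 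We may diagonalize $A$, and then, with a little more care, normalize the remaining data. Since $K(g)$ is the stabilizer of $g$ under this action, the theorem is equivalent to the following: every Einstein $g$ has a nontrivial stabilizer element. The key observation is that the diagonal $\mathbb Z_2\times\mathbb Z_2\subset SO(3)$, consisting of sign changes $\diag(\pm1,\pm1,\pm1)$ of determinant one, acts on the entries. So a metric with $A,B$ diagonal and $C$ diagonal (or with a suitable zero pattern) is fixed by some $(P,Q)$ with $P,Q$ diagonal sign matrices. Our goal is therefore to show that the Einstein equations force enough entries of $g$ to vanish, in some normalized frame, that such a sign pair exists.

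The plan has four steps. First, write the Ricci tensor of the left-invariant metric with the Milnor/Besse formula
\[
\Ric(X,X)=-\tfrac12\sum|[X,X_i]|^2-\tfrac12\Kill(X,X)+\tfrac14\sum\langle[X_i,X_j],X\rangle^2,
\]
valid for unimodular groups. Express it as a polynomial map in $(A,B,C)$, using $g^{-1}$ via cofactors, so that the Einstein condition becomes polynomial. Second, use the invariance of this system under $SO(3)\times SO(3)$ to fix a slice. Make $A$ diagonal. Then use the residual freedom in $Q$ to bring $B^{-1/2}C^TA^{-1/2}$, i.e.\ the "mixing" operator, to singular value form. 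Equivalently, choose a $g$-orthonormal adapted frame in which the cross term is diagonal. In this slice the unknowns are the diagonal entries together with off-diagonal entries of one remaining symmetric matrix.

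Third, extract from the Einstein equations the off-diagonal components $\Ric(e_i,e_j)$ for $i\neq j$. Each of these is a product of an off-diagonal unknown and a factor built from differences of eigenvalues. Use this to show that in the generic case the off-diagonal unknowns must vanish, which produces a symmetry. In the degenerate cases, where eigenvalues coincide, the stabilizer is already positive dimensional or contains an involution directly. The structure here is analogous to Milnor's diagonalization for $SU(2)$, where $\Ric$ and $g$ are simultaneously diagonalized. We would try to prove an analogue: the Einstein condition forces the three "trace-free symmetric" pieces to commute, for instance $[A,CBC^T]=0$ after normalization, and commuting symmetric matrices can be diagonalized simultaneously.

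The main obstacle is the third step. For $S^3\times S^3$ the off-diagonal Einstein equations do not simply factor. This is exactly why BCHL could not finish. One likely needs a global argument instead. Consider the functional $g\mapsto \mathrm{scal}(g)$ on unit-volume metrics; Einstein metrics are its critical points. Combine this with an equivariant argument: the involution $g\mapsto$ ("transpose-swap") or a moment-map type identity $\sum_i[\Ric,\,\cdot\,]=0$ might force $C$ to be normal with respect to $A,B$. As a fallback, we would carry out a Gröbner basis or resultant computation on the slice. This computation would show that every real positive solution lies in the locus fixed by a sign pair $(P,Q)\neq(I,I)$; by Theorem~\ref{thm:auto}'s formulation, such a pair is precisely a nonidentity element of $K(g)$.
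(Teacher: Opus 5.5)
\textbf{The decisive Step~3 is not carried out, and the mechanism you propose for it does not exist.} The off-diagonal Einstein equations do not factor as (off-diagonal unknown)$\times$(eigenvalue difference). Work in graph coordinates $(P,Q,M)$ with $M=\diag(m_1,m_2,m_3)$, normalized so that $\rho_P=1$ and $\zeta=\rho_Q$. Put $a_i=m_i^2$, $b_i=1+a_i$, $S_i=a_j+a_k$, $\Delta_i=S_i+4$, $x_i=P_{jk}$, $y_i=Q_{jk}$. Then the off-diagonal equations read
\[
-b_i\Delta_ix_i+m_iS_iy_i=x_jx_k-p_ix_i,
\qquad
m_iS_ix_i-\Delta_iy_i=\zeta\,(y_jy_k-q_iy_i).
\]
They couple $x_i$ to $y_i$ and have quadratic right-hand sides. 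Nothing forces vanishing edge by edge, as you concede at the end. What remains in your plan are intentions, not arguments: a moment-map identity that ``might'' force normality, a commutator $[A,CBC^T]=0$ you ``would try'' to prove, and a Gr\"obner computation on the real positive locus.

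Two smaller claims also fail. First, your slice is over-gauged. After the first $SO(3)$ diagonalizes $A$, only a finite group is left on that side. The second $SO(3)$ alone cannot bring $B^{-1/2}C^TA^{-1/2}$ to singular-value form, so your slice has $12$ parameters while the orbit space has dimension $21-6=15$. The paper diagonalizes only $M$ and keeps $P,Q$ fully symmetric, which matters because the vanishing pattern of the $x_i,y_i$ is exactly what gets analysed. Second, coincident eigenvalues do not by themselves give a stabilizer element. When $\ell=\tr(M^TM\Lop(P))=0$ forces $P=pI$, the paper still needs the off-diagonal $P$-equations $\tfrac12m_i(m_j^2+m_k^2)Q_{jk}=0$ to make $Q$ diagonal.

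\textbf{The missing idea is to argue by contradiction through strict spectral inequalities rather than by solving the system.} Assuming $K(g)=\{e\}$, the paper first disposes of $M=0$, of $\rank M\in\{1,2\}$, and of $\ell=0$. The rank cases use the radial identity $\ell+2\alpha-\beta=0$, obtained by pairing the $M$-equation with $M$, and its consequences.

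In the full-rank, $\ell>0$ region, Cauchy--Schwarz upgrades the radial identity to $\alpha<-\ell$. The cofactor identity $\rho_Q^{-1}Q\,\Kop(\rho_QQ)=M\Lop(P)M^T+\alpha I-RR^T$ then gives $\Kop(\rho_QQ)\prec0$, hence $\rho_QQ\prec4I$. Applying this bound to the factor-interchanged metric gives $\Bmix^{-1/2}P\Bmix^{-1/2}\prec4I$; this is the only place a swap actually enters.

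With no common invariant coordinate axis, the supports of $(x_i)$ and $(y_i)$ are of type $(3,3)$, $(2,3)$, $(3,2)$ or $(2,2)$. Write a fully mixed $H\prec4I$ as $D_H+\varepsilon vv^T$; then $(\cof H)_{jk}=-d_iH_{jk}$ and at least two $d_i<4$. This converts the two bounds into constraints on the edge ratios $\Theta_i=y_i/(m_ix_i)$. In type $(3,3)$, the $Q$-bound puts $\Theta_i\in(0,1)$ on at least two edges, while the $P$-bound forces $\Theta_i>1+1/a_i$ on at least two edges, which is impossible. The other types fail by similar sign arguments.

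None of these ingredients appears in your plan: the radial and cofactor identities, the threshold $4$, the factor interchange used as a bound-transfer device, and the rank-one decomposition. Without something of comparable strength, your Step~3 remains open.
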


The second result is a symmetry enhancement: an assumed trace $-2$ involution is forced to lie in a Klein four subgroup.

\begin{theorem}[Symmetry enhancement for a trace $-2$ involution]\label{thm:z2upgrade}
Let $g$ be a left-invariant Einstein metric on $G$.  Suppose that $g$ is preserved by an inner involution
\[
\sigma\in\Ad(G),\qquad \sigma^2=I,\qquad \sigma\neq I,
\qquad \tr\sigma=-2.
\]
Then the subgroup of $\Ad(G)$ preserving $g$ contains a Klein four subgroup that contains $\sigma$:
\[
\langle\sigma,\tau\rangle\cong\mathbb Z_2\times\mathbb Z_2
\]
for some inner involution $\tau$ commuting with $\sigma$.
\end{theorem}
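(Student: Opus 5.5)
We would first normalize $\sigma$ and then compute. Up to conjugation by $\Ad(G)$ we may take $\sigma=(\Ad_{a},\Ad_{b})$ with both factors the rotation by $\pi$ about the first basis vector. Choose $\langle\cdot,\cdot\rangle_0$-orthonormal bases $e_1,e_2,e_3$ of the first factor and $f_1,f_2,f_3$ of the second, with $[e_i,e_j]=e_k$ and $[f_i,f_j]=f_k$ up to a fixed constant, cyclically. Then
\[
\mathfrak g^{+}=\operatorname{span}\{e_1,f_1\},\qquad \mathfrak g^{-}=\operatorname{span}\{e_2,e_3,f_2,f_3\}.
\]
A $\sigma$-invariant inner product is block diagonal, $g=g_+\oplus g_-$, with $g_+$ a $2\times2$ and $g_-$ a $4\times4$ positive matrix.

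Next we use the centralizer of $\sigma$ in $\Ad(G)$, namely $(O(2)\times O(2))\cap(SO(3)\times SO(3))$. Its identity component is the torus $T^2$ of rotations about $e_1$ and $f_1$. Acting by $T^2$, which preserves the form of $\sigma$, we normalize $g_-$ as far as possible. The torus acts on $\mathfrak g^-\cong\mathbb C_{(\theta)}\oplus\mathbb C_{(\phi)}$ with independent weights. Hence $g_-$ decomposes into the $\mathbb C$-Hermitian part and the "quadratic'' (weight $2\theta$, $2\phi$, $\theta\pm\phi$) parts, and $T^2$ can be used to kill the phases of two of these components. This reproduces BCHL's explicit polynomial system in \cite[Section~3.2]{BCHL}, which we take as the starting point.

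The candidate $\tau$'s commuting with $\sigma$ are the involutions $(R_1,R_2)$ with each $R_i\in\{I,\ \pi\text{-rotation about }e_1,\ \pi\text{-rotation about an axis}\perp e_1\}$, modulo $T^2$. The most natural target is $\tau=$ rotation by $\pi$ about $e_2$ in the first factor and about $f_2$ in the second. In the normalized basis, $\tau$ preserves $g$ exactly when certain off-diagonal entries vanish, namely those pairing $e_2$-type vectors with $e_3$-type vectors and $f_1$ with $e_1$ signs appropriately. The theorem then reduces to the following algebraic statement: every solution of the $\sigma$-invariant Einstein system satisfies a set of "phase-vanishing'' equations, after possibly a further $T^2$ rotation or a change to a different Klein-four candidate. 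Concretely, we will write the Ricci tensor via the standard formula for left-invariant metrics,
\[
\Ric(X,X)=-\tfrac12\sum|[X,E_i]|^2-\tfrac12\Kill(X,X)+\tfrac14\sum\langle[E_i,E_j],X\rangle^2,
\]
in a $g$-orthonormal frame adapted to the block structure. The Einstein condition then splits into the $++$, $--$ and $+-$ blocks. The $+-$ block vanishes automatically by $\sigma$-equivariance, so the content lies in the $++$ (3 equations) and $--$ (10 equations) blocks.

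The main obstacle is the elimination step: showing that the ideal generated by the Einstein equations, together with positivity, forces the $\tau$-breaking coordinates to vanish. We would attempt this in three steps.
\begin{enumerate}
\item[(a)] Exploit the antisymmetric part of the $--$ block under the $\mathbb Z_2$ that swaps $e_2\leftrightarrow e_3$ (up to sign) to derive an identity of the form $Q\cdot s=0$, where $s$ collects the symmetry-breaking parameters and $Q$ is a sum of squares weighted by positive metric entries.
\item[(b)] Show that $Q>0$ on the positive cone, perhaps by a trace or divergence identity. A natural candidate is to pair $\Ric-\lambda g$ against the derivative of $g$ along the $T^2$-orbit, which vanishes by invariance of scalar curvature; this gives a first-variation identity along the torus directions.
\item[(c)] If (b) fails in some region, fall back on a Gr\"obner basis or resultant computation for the residual system, together with real-root and positivity checks, to exclude the residual branches.
\end{enumerate}
We expect (b) to be the crux. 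Along (c), we would also check whether the surviving branches instead admit $\tau$ of trace $2$ (a $\pi$-rotation in only one factor); this would still give a Klein four group containing $\sigma$.
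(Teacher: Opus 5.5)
Your normalization of $\sigma$, the block structure, the passage to the BCHL normal form and the list of candidate involutions are fine. The gap is that the proposal contains no argument at the one point where a proof is needed.

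In BCHL's coordinates the five off-diagonal equations decouple into $r_1Y=uZ$, $s_1Z=uY$, $r_2W=v\Th$, $s_2\Th=vW$, plus the $X$-equation. So everything hinges on ruling out the branch $(YZ)(W\Th)(B-C)(E-F)\neq0$. The paper does this with a specific chain of estimates:
\begin{enumerate}
\item comparing $u+v=X(B-C)(E-F)$ with the strict bounds $\sqrt{r_1s_1}>|X|(BE+CF)$ and $\sqrt{r_2s_2}>|X|(BF+CE)$ forces $uv<0$, and the $X$-equation then forces $X\neq0$;
\item exact formulas for $R_0,V_0$, together with $S_D>0$, give $2A<B+C<4A$ and $X^2>(B+C-2A)/(4A-B-C)$;
\item a polynomial square identity gives $(B-C)^2<6A(B+C-2A)$;
\item only then does $(B-C)(BS_B-CS_C)$ become a sum of strictly negative terms, contradicting $BS_B=CS_C$.
\end{enumerate}
Your step (a), a $2\leftrightarrow3$ antisymmetric combination, superficially points the same way. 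But there is no reason for the coefficient you hope is a positive ``sum of squares'' to be sign-definite on the positive cone. In the paper its sign is obtained only after the other Einstein equations have confined the solution to the region above, and your plan produces none of those inequalities.

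Step (b) cannot supply the missing positivity. The $T^2$-orbit of $g$ consists of isometric metrics of the same volume. Hence $\langle\Ric_g,\dot g\rangle_g=-dS_g[\dot g]=0$ and $\tr_g\dot g=0$ hold for \emph{every} left-invariant metric, Einstein or not. Pairing $\Ric-\lambda g$ with orbit directions is therefore a Noether identity and carries no information; it is exactly why the two orbit directions can be gauged away in the normal form. Step (c) is not an argument either: BCHL already had this polynomial system and left precisely its real positive solution set open.

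Finally, the target ``the $\tau$-breaking coordinates vanish'' for the natural $\tau=(R_2,R_2)$ is too strong. On the branch $Y=Z=0$ with $W\Th\neq0$, which is not excluded a priori, $(R_2,R_2)$ is not a symmetry. The extra involution there is $(R_3,R_2)$, which becomes visible only after a quarter-turn in the $E_2E_3$-plane. On the branches $B=C$ or $E=F$ you must first derive $Y\Th+WZ=0$, respectively $YW+\Th Z=0$, from the paired equations using $u+v=0$. You then rotate to diagonalize the $2\times2$ mixing block. This degenerate-case analysis is also missing from your plan.
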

A \texttt{Lean 4} formalization for Theorem~\ref{thm:auto} and Theorem~\ref{thm:z2upgrade} is available, using \texttt{mathlib} \cite{lean4,mathlib}, at \url{https://github.com/wqirocks/S3xS3-LI-Einstein}. 

Combining these two statements with the cases already classified in the literature gives the final result.

\begin{theorem}[Complete classification]\label{thm:classification}
Every left-invariant Einstein metric on
$S^3\times S^3\cong\mathrm{SU}(2)\times\mathrm{SU}(2)$ is, up to homothety and isometry, either the standard product metric $g_{\rm can}$ or the Jensen metric $g_{\rm NK}$.
\end{theorem}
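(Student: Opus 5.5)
The classification follows from Theorems~\ref{thm:auto} and~\ref{thm:z2upgrade} together with the results of \cite{NR,BCHL}, by a case analysis on the inner isotropy $K(g)$ of a given left-invariant Einstein metric $g$. By Theorem~\ref{thm:auto}, $K(g)\neq\{e\}$. Since $K(g)$ is a compact Lie group, either $\dim K(g)>0$ or $K(g)$ is a nontrivial finite subgroup of $\Ad(G)\cong SO(3)\times SO(3)$.

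\emph{Positive dimension.} If $\dim K(g)>0$, the identity component of $K(g)$ is a nontrivial compact connected Lie group. It therefore contains a maximal torus of positive dimension, and hence a circle subgroup. The result of \cite{NR}, as summarized in \cite[Theorem~1]{BCHL}, then gives that $g$ is homothetic and isometric to $g_{\rm can}$ or $g_{\rm NK}$.

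\emph{Finite $K(g)$.} Suppose first that $K(g)\not\cong\mathbb Z_2$. Then $K(g)$ is itself a nontrivial finite subgroup of $\Ad(G)$ not isomorphic to $\mathbb Z_2$, and \cite[Theorem~2]{BCHL} applies directly.

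It remains to treat $K(g)\cong\mathbb Z_2=\{I,\sigma\}$. Here $\sigma$ is an inner involution, so $\tr\sigma\in\{2,-2\}$ by the trace discussion in the introduction.
\begin{itemize}
\item If $\tr\sigma=2$, \cite[Proposition~7]{BCHL} gives the conclusion.
\item If $\tr\sigma=-2$, Theorem~\ref{thm:z2upgrade} produces an inner involution $\tau$ commuting with $\sigma$ such that $\langle\sigma,\tau\rangle\cong\mathbb Z_2\times\mathbb Z_2$ preserves $g$. Then $\langle\sigma,\tau\rangle\subseteq K(g)$, which has order $2$, a contradiction. So this subcase does not occur.
\end{itemize}

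Every case therefore yields $g_{\rm can}$ or $g_{\rm NK}$ up to homothety and isometry. Conversely, both metrics are Einstein: the first is bi-invariant, and the second by \cite{Jensen}.

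The only step needing care is bookkeeping: checking that the cited results are stated for the same notion of symmetry used here. Specifically, ``$K(g)$ contains a circle'' or ``a finite subgroup of $\Ad(G)$'' in \cite{BCHL} must refer to inner isometries of $g$ fixing $e$, which is exactly $K(g)$ as defined in \eqref{eq:inner-isotropy-definition}. Likewise, their equivalence relation, homothety composed with diffeomorphism, must match ours. Once these conventions are aligned, the case split above is exhaustive and the proof is purely combinatorial.
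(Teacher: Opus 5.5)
Your proof is correct and follows the paper's argument: the same case split on $K(g)$ into positive-dimensional, finite not isomorphic to $\mathbb Z_2$, and $\mathbb Z_2$ with trace $2$ or $-2$, using the same results of \cite{BCHL} together with Theorems~\ref{thm:auto} and~\ref{thm:z2upgrade}. The only difference is cosmetic and occurs in the trace $-2$ subcase: you observe that the Klein four group lies in $K(g)$, contradicting $|K(g)|=2$, while the paper expresses the same fact by saying that $g$ then falls under \cite[Theorem~2]{BCHL}.
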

Together with the classification results of
Nikonorov--Rodionov \cite{NR99,NR},
Theorem~\ref{thm:classification} completes the classification
of compact simply connected homogeneous Einstein manifolds
in dimension six.

\subsection{Outline of the proof}

Sections~\ref{sec:linear}--\ref{sec:einstein-crit} establish global
coordinates and the scalar-curvature equations. The radial cofactor
identity gives the first spectral bound in Section~\ref{sec:first-bound};
a factor interchange gives the second in Section~\ref{sec:swap}.
After the rank reductions, the off-diagonal Einstein equations restrict
the possible supports to four types. Their exclusion proves
Theorem~\ref{thm:auto} in Section~\ref{sec:auto-proof}.
Sections~\ref{sec:z2-setup}--\ref{sec:z2-upgrade} prove
Theorem~\ref{thm:z2upgrade}: the paired equations determine the signs,
a square identity bounds the anisotropy, and a diagonal
Einstein difference excludes the fully mixed anisotropic case.
The remaining parameter cases yield the required Klein four subgroup.
Section~\ref{sec:synthesis} combines these results with the previously
classified cases. The appendices relate both coordinate descriptions
to the polynomial systems in \cite{BCHL}.

\section{Linear-algebraic preliminaries}\label{sec:linear}

Write $M_n(\mathbb R)$ for the real $n\times n$ matrices,
$\Sym_n$ for the symmetric matrices, and $\Sym_n^+$ for their
positive-definite cone. For symmetric $A,B$, write $A\succeq B$
when $A-B$ is positive semidefinite and $A\succ B$ when it is
positive definite; reversed inequalities have the analogous meaning.
In particular, $H\prec4I$ means that all eigenvalues of $H$ are
strictly less than $4$, and $H>0$ means $H\in\Sym_n^+$.

The Frobenius inner product and norm on matrices are
\[
\ip{X}{Y}=\tr(X^TY),\qquad \norm{X}_{\Frob}^2=\ip{X}{X}.
\]
Whenever a gradient of a scalar-valued matrix function is written without further qualification, it is taken with respect to this inner product.  If $F$ is differentiable, $DF_A[H]$ denotes its derivative at $A$ in the direction $H$.  For $H\in\Sym_n^+$, $H^{1/2}$ denotes the unique positive-definite square root of $H$.
Write
\[
\mathfrak g=\mathfrak{su}(2)\oplus\mathfrak{su}(2).
\]
On each factor we use the background inner product
\[
\ip{X}{Y}_0=-\frac12\Kill(X,Y),
\]
where $\Kill$ is the Killing form.  Choose oriented $\ip{\cdot}{\cdot}_0$-orthonormal bases
\[
(E_1,E_2,E_3),\qquad(F_1,F_2,F_3)
\]
satisfying
\[
[E_i,E_j]=\varepsilon_{ijk}E_k,
\qquad [F_i,F_j]=\varepsilon_{ijk}F_k,
\qquad [E_i,F_j]=0.
\]
Here $\varepsilon_{ijk}$ is the alternating symbol with $\varepsilon_{123}=1$.  The same orientation determines the usual cross product on each copy of $\mathbb R^3$ after identifying $E_i$ or $F_i$ with the standard basis.  Later, ``$(i,j,k)$ cyclic'' always means one of $(1,2,3)$, $(2,3,1)$, or $(3,1,2)$.

For $A\in M_3(\mathbb R)$, define its cofactor matrix by
\[
(\cof A)_{ij}=(-1)^{i+j}\det A[\widehat i\mid\widehat j],
\]
where $A[\widehat i\mid\widehat j]$ is obtained by deleting row $i$
and column $j$. This convention gives
\[
(\cof A)A^T=A^T(\cof A)=(\det A)I.
\]
For invertible $A$, $\cof A=(\det A)A^{-T}$; the signed-minor
definition applies also when $A$ is singular. With respect to the
Frobenius inner product, $\cof A$ is the gradient of the determinant:
\[
D(\det)_A[H]=\ip{\cof A}{H}.
\]

\begin{lemma}[Differential identities for the cofactor]\label{lem:cof-basic}
For all $A,H\in M_3(\mathbb R)$:
\begin{enumerate}[label=\textup{(\roman*)}]
\item $D(\det)_A[H]=\ip{\cof A}{H}$;
\item $\Ccal_A:=D(\cof)_A$ is self-adjoint for the Frobenius inner product;
\item
$
\Ccal_A(H)A^T=\ip{\cof A}{H}I-(\cof A)H^T;
$
\item $\Ccal_A(A)=2\cof A$.
\end{enumerate}
\end{lemma}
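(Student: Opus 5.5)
Part (i) is the statement $D(\det)_A[H]=\ip{\cof A}{H}$ recorded just before the lemma. I will prove it directly by Laplace expansion. Expanding $\det A$ along row $i$ gives
\[
\det A=\sum_j a_{ij}(\cof A)_{ij},
\]
and the cofactors $(\cof A)_{ij}$ do not involve the entries of row $i$. Hence $\partial\det A/\partial a_{ij}=(\cof A)_{ij}$. Summing against $h_{ij}$ yields $D(\det)_A[H]=\sum_{ij}(\cof A)_{ij}h_{ij}=\ip{\cof A}{H}$. This holds for all $A$, singular or not, because both sides are polynomial identities.

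For (ii), I will use (i) to write $\cof=\nabla\det$, the Frobenius gradient of the polynomial function $\det$. Then $\Ccal_A=D(\nabla\det)_A$ is the Hessian of $\det$ at $A$, namely the linear map
\[
H\mapsto \nabla_H\nabla\det .
\]
Its bilinear form is $\ip{\Ccal_A(H)}{K}=D^2(\det)_A[H,K]$. Since $\det$ is smooth (indeed polynomial), mixed partial derivatives commute, so this form is symmetric in $H,K$. That symmetry is exactly self-adjointness of $\Ccal_A$.

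For (iii), I will differentiate the identity $(\cof A)A^T=(\det A)I$, which holds for every $A$, in the direction $H$. The product rule gives
\[
\Ccal_A(H)A^T+(\cof A)H^T=D(\det)_A[H]\,I=\ip{\cof A}{H}I,
\]
and rearranging gives (iii).

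For (iv), note that every entry of $\cof A$ is a $2\times2$ minor, so $\cof$ is homogeneous of degree $2$ in $A$. Euler's identity then gives
\[
\Ccal_A(A)=\frac{d}{dt}\Big|_{t=1}\cof(tA)=\frac{d}{dt}\Big|_{t=1}t^2\cof A=2\cof A.
\]
As a cross-check for invertible $A$, (iii) with $H=A$ gives $\Ccal_A(A)A^T=3\det A\,I-\det A\,I=2\det A\,I$, which agrees with $2(\cof A)A^T$.

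No step is a genuine obstacle. The only care needed is to check that the signed-minor convention matches $(\cof A)A^T=(\det A)I$ rather than its transpose. That convention fixes the index placement in (i) and in the Laplace expansion.
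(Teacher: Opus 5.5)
Your proof is correct and follows the paper's argument essentially step for step: (i) via the gradient of $\det$, Hessian symmetry for (ii), differentiating $(\cof A)A^T=(\det A)I$ for (iii), and degree-two homogeneity of $\cof$ for (iv). The only addition is that you verify (i) explicitly by row Laplace expansion, which the paper cites as standard, and that expansion is consistent with the paper's signed-minor convention.
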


\begin{proof}
The first identity is the standard differential formula for the determinant.  Since $\cof A=\nabla(\det)(A)$, the operator $\Ccal_A$ is the Hessian of the polynomial $\det$ and is therefore self-adjoint.  Differentiating
\[
(\cof A)A^T=(\det A)I
\]
in the direction $H$ gives (iii).  Finally, $\cof(tA)=t^2\cof A$, and differentiating at $t=1$ gives (iv).
\end{proof}

\begin{lemma}[Cross-product covariance]\label{lem:cross-cof}
For every $T\in M_3(\mathbb R)$ and $u,v\in\mathbb R^3$,
\[
(Tu)\times(Tv)=(\cof T)(u\times v).
\]
\end{lemma}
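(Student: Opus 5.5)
The identity $(Tu)\times(Tv)=(\cof T)(u\times v)$ is bilinear in $(u,v)$ on both sides and polynomial in the entries of $T$. My plan is to prove it first for invertible $T$ by a triple-product argument, and then extend to all $T$ by density.

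For invertible $T$, fix $u,v$ and test both sides against an arbitrary vector $Tw$. Since $T$ is invertible, such vectors exhaust $\mathbb R^3$. The scalar triple product identity and the multiplicativity of the determinant give
\[
\bigl((Tu)\times(Tv)\bigr)\cdot(Tw)=\det[Tu\;Tv\;Tw]=(\det T)\det[u\;v\;w]=(\det T)\,(u\times v)\cdot w .
\]
On the other side, the relation $A^T(\cof A)=(\det A)I$ from the cofactor convention gives
\[
\bigl((\cof T)(u\times v)\bigr)\cdot(Tw)=(u\times v)\cdot\bigl((\cof T)^T T w\bigr).
\]
Transposing $T^T(\cof T)=(\det T)I$ yields $(\cof T)^TT=(\det T)I$, so the right-hand side equals $(\det T)(u\times v)\cdot w$. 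The two expressions agree for every $w$, hence the identity holds when $\det T\neq0$.

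For general $T$, both sides are polynomial, hence continuous, functions of the entries of $T$ for fixed $u,v$. Invertible matrices are dense in $M_3(\mathbb R)$, so the identity extends to all $T$. This is why the paper insists on the signed-minor definition of $\cof$: it makes $\cof$ a polynomial map, defined even at singular $T$.

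The only point needing care is the transpose bookkeeping. One must confirm that the stated convention really gives $(\cof T)^T T=(\det T)I$, which follows by transposing the given identity $T^T\cof T=(\det T)I$, rather than some other placement of the transpose. Once that is checked, everything else is routine.
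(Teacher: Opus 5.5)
Your proposal is correct and is essentially the paper's own proof. Like the paper, you pair both sides with $Tw$ for invertible $T$, using the triple-product determinant formula on one side and $(\cof T)^TT=(\det T)I$ on the other, and then extend to singular $T$ by polynomial continuity and density of invertible matrices. Your transpose bookkeeping also checks out against the stated convention.
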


\begin{proof}
For invertible $T$, pair both sides with $Tw$ and use the determinant formula for the scalar triple product together with $(\cof T)^TT=(\det T)I$.  Singular $T$ follow by polynomial continuity.
\end{proof}

\begin{lemma}[Inverse of the positive cofactor map]\label{lem:cof-inverse}
If $C\in\Sym_3^+$, then there is a unique $P\in\Sym_3^+$ with $C=\cof P$, namely
\[
P=\sqrt{\det C}\,C^{-1}.
\]
In particular $\det(\cof P)=(\det P)^2$.
\end{lemma}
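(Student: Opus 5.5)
The statement to prove is Lemma~\ref{lem:cof-inverse}: every positive-definite $C$ has exactly one positive-definite cofactor preimage. I will prove existence by direct verification, uniqueness by inverting the relation $\cof P=(\det P)P^{-1}$, and then read off the determinant identity.

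\emph{Existence.} For $C\in\Sym_3^+$, set $P=\sqrt{\det C}\,C^{-1}$. Then $P$ is symmetric and positive definite, since $C^{-1}$ is and $\det C>0$. In dimension three,
\[
\det P=(\det C)^{3/2}\det C^{-1}=(\det C)^{1/2}.
\]
Because $P$ is invertible, $\cof P=(\det P)P^{-T}=(\det P)P^{-1}$. Substituting gives
\[
\cof P=(\det C)^{1/2}\cdot(\det C)^{-1/2}C=C,
\]
as required.

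\emph{Uniqueness.} Suppose $P\in\Sym_3^+$ satisfies $\cof P=C$. Since $P$ is invertible and symmetric, $C=(\det P)P^{-1}$. Taking determinants in dimension three gives
\[
\det C=(\det P)^3(\det P)^{-1}=(\det P)^2.
\]
Both determinants are positive, so $\det P=\sqrt{\det C}$. Solving $C=(\det P)P^{-1}$ for $P$ then yields
\[
P=(\det P)\,C^{-1}=\sqrt{\det C}\,C^{-1},
\]
which is the formula in the statement.

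\emph{The final identity.} The computation $\det C=(\det P)^2$ in the uniqueness step, with $C=\cof P$, is exactly the claim $\det(\cof P)=(\det P)^2$. More generally it holds for any invertible $P$, since $\cof P=(\det P)P^{-T}$.

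No step here is a real obstacle. The only points needing care are the exponent bookkeeping that is special to $n=3$, and taking the positive square root, which is justified by positivity of $\det P$.
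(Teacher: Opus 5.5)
Your proof is correct and follows the same route as the paper: both rest on $\cof P=(\det P)P^{-1}$ for positive-definite $P$. You write out the existence check, the determinant computation $\det C=(\det P)^2$, and the uniqueness inversion, which the paper compresses into ``the stated inverse follows immediately.''
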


\begin{proof}
For $P>0$, $\cof P=(\det P)P^{-1}$.  The stated inverse follows immediately.
\end{proof}

\section{A global matrix parametrization of left-invariant metrics}\label{sec:graph}

Represent $g_e$ relative to the background inner product by
\[
g_e(X,Y)=\ip{LX}{Y}_0,\qquad L\in\Sym_6^+.
\]
Thus the space of left-invariant metrics is $\Sym_6^+$.
Write $\mathbf E=(E_1,E_2,E_3)$ and $\mathbf F=(F_1,F_2,F_3)$
as row vectors of basis elements. Multiplication on the right by a
matrix denotes the corresponding linear combinations.

\begin{proposition}[Graph parametrization]\label{prop:graph}
Every $L\in\Sym_6^+$ is represented uniquely by
\[
(P,Q,M)\in\Sym_3^+\times\Sym_3^+\times M_3(\mathbb R)
\]
as follows.  Let
\[
D=(\cof P)^{1/2},\qquad E=(\cof Q)^{1/2},
\]
and define
\[
A(P,Q,M)=
\begin{pmatrix}
D&0\\
MD&E
\end{pmatrix}.
\]
Then
\[
L=(A^TA)^{-1}.
\]
Moreover the frame
\[
(\mathbf X,\mathbf Y)=(\mathbf E,\mathbf F)A^T
\]
is $g$-orthonormal.  The resulting map
\[
(P,Q,M)\longmapsto L
\]
is a smooth diffeomorphism from $\Sym_3^+\times\Sym_3^+\times M_3(\mathbb R)$ onto $\Sym_6^+$.
\end{proposition}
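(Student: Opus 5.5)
The plan is to reduce the statement to the block Cholesky-type factorization of the positive-definite matrix $N:=L^{-1}\in\Sym_6^+$. First, if $A$ is any invertible $6\times6$ matrix, then $L=(A^TA)^{-1}$ is equivalent to $A^TA=N$. The orthonormality of $(\mathbf X,\mathbf Y)=(\mathbf E,\mathbf F)A^T$ follows directly from this. The Gram matrix of this frame with respect to $g$ is $A L A^T$ in the background coordinates. Using $L=(A^TA)^{-1}=A^{-1}A^{-T}$, this equals $AA^{-1}A^{-T}A^T=I$. So the substantive content is existence, uniqueness, and smoothness of $(P,Q,M)$ with $A(P,Q,M)^TA(P,Q,M)=N$.

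Next I would expand the product blockwise. With $A$ block lower triangular and $D,E$ symmetric,
\[
A^TA=\begin{pmatrix} D^2+DM^TMD & DM^TE\\ EMD & E^2\end{pmatrix}.
\]
Writing $N=\begin{pmatrix}N_{11}&N_{12}\\ N_{21}&N_{22}\end{pmatrix}$, the equations are $E^2=N_{22}$, $EMD=N_{21}$, and $D^2+DM^TMD=N_{11}$. These determine the unknowns successively.

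The first equation gives $\cof Q=E^2=N_{22}$. By Lemma~\ref{lem:cof-inverse}, this has the unique solution $Q=\sqrt{\det N_{22}}\,N_{22}^{-1}\in\Sym_3^+$, and then $E=N_{22}^{1/2}$. The second equation gives $MD=E^{-1}N_{21}$. Substituting into the third yields
\[
D^2=N_{11}-N_{12}N_{22}^{-1}N_{21},
\]
which is the Schur complement. It is positive definite because $N>0$. Hence $\cof P=D^2$ is this Schur complement, $P$ is uniquely determined by Lemma~\ref{lem:cof-inverse}, and $D$ is its positive square root. Finally $M=E^{-1}N_{21}D^{-1}$ is uniquely determined.

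Conversely, these formulas produce $(P,Q,M)$ in the stated domain with $A^TA=N$. This gives a bijection, with explicit inverse
\[
N\mapsto\bigl(\sqrt{\det S}\,S^{-1},\ \sqrt{\det N_{22}}\,N_{22}^{-1},\ N_{22}^{-1/2}N_{21}S^{-1/2}\bigr),
\qquad S=N_{11}-N_{12}N_{22}^{-1}N_{21}.
\]

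For smoothness I will use three facts. The maps $P\mapsto\cof P$, matrix inversion on invertible matrices, $\det$, $\sqrt{\cdot}$ on $(0,\infty)$, and the positive square root on $\Sym_3^+$ are all smooth. The last of these is smooth by the inverse function theorem, because $X\mapsto X^2$ has invertible derivative $H\mapsto XH+HX$ on $\Sym^+$. Consequently both the forward map $(P,Q,M)\mapsto (A^TA)^{-1}$ and the explicit inverse above are smooth, so the map is a diffeomorphism.

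The main point needing care is the positivity of the Schur complement, which is what makes $D$, and hence $P$, well defined. I would verify it by the congruence
\[
N=\begin{pmatrix}I&N_{12}N_{22}^{-1}\\0&I\end{pmatrix}\begin{pmatrix}S&0\\0&N_{22}\end{pmatrix}\begin{pmatrix}I&0\\N_{22}^{-1}N_{21}&I\end{pmatrix},
\]
whose outer factors are transposes of each other. Since $N\succ0$, the block-diagonal middle factor is positive definite, so $S\succ0$.
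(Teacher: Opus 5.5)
Your proposal is correct and follows essentially the same route as the paper. You invert $L$, solve the block equations for $A^TA=L^{-1}$ in turn ($E=N_{22}^{1/2}$, $D^2$ equal to the Schur complement, $M=E^{-1}N_{21}D^{-1}$), recover $P,Q$ via Lemma~\ref{lem:cof-inverse}, and read off orthonormality from $ALA^T=I$. The only differences are that you derive uniqueness explicitly from the block expansion, and that you prove the Schur-complement positivity (by the congruence factorization) and the smoothness of the positive square root (by the inverse function theorem), both of which the paper simply cites as standard.
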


\begin{proof}
Put $K=L^{-1}$ and write
\[
K=\begin{pmatrix}K_{11}&K_{12}\\K_{21}&K_{22}\end{pmatrix}.
\]
Set
\[
E=K_{22}^{1/2},\qquad W=E^{-1}K_{21}.
\]
The matrix
\[
K_{11}-K_{12}K_{22}^{-1}K_{21}=K_{11}-W^TW
\]
is the Schur complement of the positive-definite block $K_{22}$ in $K$.  A standard block-matrix criterion for positive definiteness shows that this Schur complement is positive definite.  Hence its positive square root
\[
D=(K_{11}-W^TW)^{1/2}
\]
is defined and
\[
A:=\begin{pmatrix}D&0\\W&E\end{pmatrix}
\]
satisfies $A^TA=K$.  Lemma~\ref{lem:cof-inverse} gives unique $P,Q>0$ with $D^2=\cof P$ and $E^2=\cof Q$, and then $M=WD^{-1}$.  This reconstruction is unique.  All operations used in both directions---block extraction, inversion, Schur complementation, and the positive-definite square root---are smooth on their respective positive-definite domains.  Hence the parametrization and its inverse are smooth.  Finally $L=A^{-1}A^{-T}$ implies $ALA^T=I$, so the displayed frame is orthonormal.
\end{proof}

\begin{definition}[Graph coordinates and graph triple]\label{def:graph-triple}
For a left-invariant metric $g$, the unique triple $(P,Q,M)$ supplied by Proposition~\ref{prop:graph} is called the \emph{graph triple} of $g$, and $(P,Q,M)$ are called its \emph{graph coordinates}.  This terminology refers only to the block lower-triangular orthonormal-frame matrix
\[
A(P,Q,M)=\begin{pmatrix}D&0\\MD&E\end{pmatrix}.
\]
The matrix $M$ records the component of the second group of orthonormal frame vectors in the first factor and is called the \emph{mixing matrix}.
\end{definition}

\begin{proposition}[Volume]\label{prop:volume}
Let the background left-invariant metric be the one induced by $\ip{\cdot}{\cdot}_0$ on the two factors.  The ratio of the Riemannian volume density of $g$ to that background volume density (equivalently, the ratio of total volumes, since both are left-invariant) is
\[
V(g):=\frac{\operatorname{Vol}(G,g)}{\operatorname{Vol}(G,g_{\rm can})}
=\sqrt{\det L}=(\det P\det Q)^{-1}.
\]
In particular, the volume is independent of $M$.
\end{proposition}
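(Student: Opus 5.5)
The plan is to compute $\det L$ directly from the factorization in Proposition~\ref{prop:graph} and then apply the standard formula for the volume of a left-invariant metric. Since $g_e(X,Y)=\ip{LX}{Y}_0$ and the background basis $(\mathbf E,\mathbf F)$ is $\ip{\cdot}{\cdot}_0$-orthonormal, the Gram matrix of $g_e$ in that basis is $L$. The Riemannian volume density of $g$ relative to the background density is therefore the constant $\sqrt{\det L}$ at every point, because both densities are left-invariant. Integrating over the compact group $G$ gives
\[
\operatorname{Vol}(G,g)=\sqrt{\det L}\,\operatorname{Vol}(G,g_{\rm can}),
\]
where $g_{\rm can}$ is exactly the background metric with $L=I$. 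This proves the first equality.

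For the second equality, use $L=(A^TA)^{-1}$. This gives
\[
\det L=(\det A)^{-2},\qquad\text{so}\qquad \sqrt{\det L}=|\det A|^{-1}.
\]
The matrix $A$ is block lower triangular, so
\[
\det A=\det D\cdot\det E .
\]
Here $D=(\cof P)^{1/2}$ and $E=(\cof Q)^{1/2}$ are positive definite. By Lemma~\ref{lem:cof-inverse},
\[
\det D=\sqrt{\det\cof P}=\sqrt{(\det P)^2}=\det P,
\]
where the last step uses $P>0$. The same argument gives $\det E=\det Q$. Hence
\[
\sqrt{\det L}=(\det P\det Q)^{-1}.
\]
The mixing matrix $M$ appears only in the off-diagonal block of $A$, so it does not enter $\det A$. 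The volume is therefore independent of $M$.

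No step is a real obstacle. The only point needing care is the justification that the volume density ratio equals $\sqrt{\det L}$. This holds because $(\mathbf E,\mathbf F)$ is background-orthonormal, so $L$ is the Gram matrix of $g_e$, and the ratio is constant by left-invariance.
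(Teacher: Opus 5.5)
Your proposal is correct and follows the paper's proof: $\sqrt{\det L}=|\det A|^{-1}$, the block-triangular structure of $A$ gives $\det A=\det D\det E$, and Lemma~\ref{lem:cof-inverse} gives $\det D=\det P$ and $\det E=\det Q$. You also justify why the volume-density ratio equals $\sqrt{\det L}$, which the paper leaves implicit; that is a harmless addition.
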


\begin{proof}
Since $L=(A^TA)^{-1}$,
\[
\sqrt{\det L}=|\det A|^{-1}=(\det D\det E)^{-1}.
\]
Because $D^2=\cof P$ and $E^2=\cof Q$, Lemma~\ref{lem:cof-inverse} yields $\det D=\det P$ and $\det E=\det Q$.
\end{proof}

Under the bases fixed above, the adjoint group is identified with
\[
\Ad(G)=SO(3)\times SO(3).
\]
For $(U,V)$ in this group, the induced change of oriented orthonormal bases acts on graph coordinates by
\[
(P,Q,M)\longmapsto(UPU^T,VQV^T,VMU^T).
\]
This is the inner-automorphism action on left-invariant metrics, up to the convention of writing the pullback using the inverse group element.  In particular, applying this action replaces the metric by one obtained from it by an inner automorphism and therefore does not change any isometry-invariant statement.

\begin{proposition}[Inner-automorphism action and signed SVD]\label{prop:equivariance}
For $T=\diag(U,V)$,
\[
A(UPU^T,VQV^T,VMU^T)=TA(P,Q,M)T^T.
\]
Consequently scalar curvature and volume are invariant under the action above.  Moreover every $M$ can be placed, by this $SO(3)\times SO(3)$ action, in signed singular-value form
\[
M=\diag(m_1,m_2,m_3),\qquad m_i\in\mathbb R.
\]
Here ``singular-value form'' refers to the usual singular-value decomposition $M=V^T\Sigma U$ with orthogonal $U,V$ and diagonal $\Sigma\ge0$; the adjective ``signed'' records the modification needed when both changes of basis are required to preserve orientation.
If $\rank M=1$ or $2$, all nonzero $m_i$ may be chosen positive.
\end{proposition}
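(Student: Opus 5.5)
The plan is to verify the conjugation identity by direct block multiplication, deduce the two invariance statements from it, and obtain the normal form from the singular value decomposition followed by a sign correction.

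\textbf{Step 1: the identity.} With $T=\diag(U,V)$ and $U,V\in SO(3)$, we compute
\[
TA(P,Q,M)T^T=\begin{pmatrix}UDU^T&0\\ VMDU^T&VEV^T\end{pmatrix}.
\]
We check that this is $A(UPU^T,VQV^T,VMU^T)$ entry by entry. Lemma~\ref{lem:cross-cof} gives $\cof(UPU^T)=\cof(U)\cof(P)\cof(U^T)$, because cofactor is multiplicative; for $A,B$ invertible this follows from $\cof A=(\det A)A^{-T}$, and in general from continuity. For $U\in SO(3)$ we have $\cof U=U$. Hence $\cof(UPU^T)=U(\cof P)U^T$. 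Uniqueness of the positive square root then gives $(\cof(UPU^T))^{1/2}=UDU^T$, and likewise for $E$. The lower-left block is $(VMU^T)(UDU^T)=VMDU^T$, as required.

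\textbf{Step 2: invariance.} The identity gives
\[
L'=(TA^TT^T\,TAT^T)^{-1}=TLT^T.
\]
This is the metric transported by the orthogonal Lie algebra automorphism $T$, so $g$ and its image differ by an isometry, namely the inner automorphism. The bracket relations are preserved because $T$ maps the oriented orthonormal bases to oriented orthonormal bases with the same structure constants $\varepsilon_{ijk}$. Scalar curvature and volume are therefore unchanged. Volume invariance can also be read off Proposition~\ref{prop:volume}, since $\det(UPU^T)=\det P$.

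\textbf{Step 3: signed SVD.} Take an SVD $M=V_0^T\Sigma U_0$ with $U_0,V_0\in O(3)$ and $\Sigma\ge0$ diagonal. If $\det U_0=-1$, replace $U_0$ by $SU_0$ with $S=\diag(1,1,-1)$, and replace $\Sigma$ by $\Sigma S$; do the same independently for $V_0$, acting on the left. The resulting $M=V^T\Sigma' U$ has $U,V\in SO(3)$ and $\Sigma'$ diagonal with possibly signed entries. Acting by $(U,V)$ sends $M$ to $VMU^T=\Sigma'$.

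If $\rank M\le2$, some singular value vanishes. Order the entries so that $m_3=0$, which can be done with permutation matrices of determinant one adjusted by signs. The sign flips can then be placed on the zero entry: use $S=\diag(1,1,-1)$ positioned at the zero singular value, and note that $S\Sigma=\Sigma$ there. With this choice all nonzero $m_i$ are positive.

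\textbf{Main obstacle.} The only subtle point is the cofactor equivariance together with square-root uniqueness. Both reduce to $\cof U=U$ for $U\in SO(3)$, which is where orientation preservation is essential.
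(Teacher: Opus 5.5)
Your proof is correct and follows the paper's route: the paper also obtains the identity from the positive square root commuting with orthogonal conjugation (leaving implicit the cofactor equivariance you verify), and it obtains the normal form from an $O(3)\times O(3)$ singular-value decomposition with the orientation sign absorbed into one diagonal entry, taken to be the zero entry when $\rank M<3$. One small correction to your closing remark: $\cof(UPU^T)=U(\cof P)U^T$ holds for every $U\in O(3)$, because $\cof U=(\det U)U$ and the two factors of $\det U$ cancel, so orientation preservation is needed only to make $T$ an inner Lie-algebra automorphism (your Step 2) and in the signed-SVD normalization.
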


\begin{proof}
The positive square root commutes with orthogonal conjugation, which gives the matrix identity.  For the last statement begin with an ordinary $O(3)\times O(3)$ singular-value decomposition and absorb any orientation-reversing sign into one diagonal singular value.  If a singular value is zero, the sign can be absorbed in the zero direction.
\end{proof}

\begin{definition}[Signed-SVD gauge]\label{def:signed-svd-gauge}
A graph triple $(P,Q,M)$ is said to be in \emph{signed-SVD gauge} if the
$SO(3)\times SO(3)$ action above has been used to choose a representative
for which
\[
M=\diag(m_1,m_2,m_3),\qquad m_i\in\mathbb R.
\]
Here ``gauge'' means only a choice of oriented orthonormal bases in the two
factors.  The same change of bases sends $P$ to $UPU^T$ and $Q$ to
$VQV^T$, so it does not impose an additional geometric condition on the
metric.  Equivalently, we replace the metric by its pullback under an inner
automorphism; this conjugates $K(g)$ and therefore preserves the distinction
between trivial and nontrivial inner isotropy.

The numbers $|m_i|$ are the ordinary singular values of $M$.  The signs are
allowed because the two changes of basis are required to lie in $SO(3)$ rather
than merely in $O(3)$.  If $M$ is invertible, the product
$m_1m_2m_3$ has the same sign as $\det M$.  If $\rank M<3$, a zero singular
direction absorbs the orientation sign, so every nonzero $m_i$ may be chosen
positive, as stated in Proposition~\ref{prop:equivariance}.
\end{definition}

From now on, whenever equations involving vanishing off-diagonal entries or the factor interchange are used,
we choose signed-SVD gauge in the sense of
Definition~\ref{def:signed-svd-gauge}.

\section{Scalar curvature in graph coordinates}\label{sec:scalar}

The scalar curvature of a left-invariant metric is constant on $G$, because left translations act transitively by isometries.  We denote this constant by $S(P,Q,M)$ when the metric has graph triple $(P,Q,M)$.  Introduce the two polynomial expressions
\[
\Phi(H)=\frac12(\tr H)^2-\tr(H^2),
\qquad
\Lop(H)=\tr(H^2)I-H^2-2\cof H,
\]
and use the abbreviations
\[
C=\cof M,\qquad R=CP-QM.
\]

\begin{theorem}\label{thm:scalar}
For the left-invariant metric determined by $(P,Q,M)$,
\begin{equation}\label{eq:scalar}
S(P,Q,M)=
\Phi(P)+\Phi(Q)
-\frac12\tr\bigl(M^TM\Lop(P)\bigr)
-\frac12\norm{CP-QM}_{\Frob}^{2}.
\end{equation}
\end{theorem}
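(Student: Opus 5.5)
We compute the scalar curvature from the structure constants in the $g$-orthonormal frame $(\mathbf X,\mathbf Y)=(\mathbf E,\mathbf F)A^T$ of Proposition~\ref{prop:graph}. For a left-invariant metric on a unimodular Lie group, with orthonormal basis $\{e_a\}$ and structure constants $c_{ab}^{\ c}=g([e_a,e_b],e_c)$, one has
\[
S=-\frac14\sum_{a,b,c}(c_{ab}^{\ c})^2-\frac12\sum_{a,b,c}c_{ab}^{\ c}c_{ac}^{\ b}.
\]
The second sum equals $-\frac12\tr(\Kill)$ restricted appropriately, that is, $-\frac12\sum_a \Kill(e_a,e_a)$. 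So the plan is to compute the bracket of the orthonormal frame and then evaluate these two expressions in terms of $(P,Q,M)$.

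\textbf{Step 1: the brackets.} Write $X_i=\sum_j D_{ij}E_j+\sum_j (MD)_{ij}F_j$ and $Y_i=\sum_j E_{ij}F_j$. Here $D,E$ are symmetric, and I use $E$ both for the matrix and for the basis only when unambiguous. Using Lemma~\ref{lem:cross-cof}, brackets in each factor transform by the cofactor:
\[
[\mathbf E u,\mathbf E v]=\mathbf E(u\times v),\qquad [\mathbf Eu,\mathbf Ev]\text{ pulled through }D\text{ gives }\cof D.
\]
Since $\cof D=\cof((\cof P)^{1/2})=(\det P)^{1/2}\cdot(\det P)^{-1}\ldots$, a short computation with Lemma~\ref{lem:cof-inverse} shows $\cof D\,D^{-1}$-type products reduce to $P$ itself. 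This is exactly why the parametrization uses $D^2=\cof P$: expressed back in the frame, $[X_i,X_j]$ has $\mathbf X$-components given by $P$ and $\mathbf Y$-components given by $CP-QM=R$. Likewise $[Y_i,Y_j]$ has components given by $Q$ in the $\mathbf Y$ direction, and $[X_i,Y_j]$ involves $Q$ and $M$. To get there, I re-express $\mathbf E$ and $\mathbf F$ in terms of $(\mathbf X,\mathbf Y)$ via $A^{-T}$, whose off-diagonal block is $-E^{-1}MD\cdot D^{-1}$-type.

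\textbf{Step 2: the two sums.} The Killing-trace term depends only on the adjoint representation and the metric, giving $\tr$ of $\cof P$, $\cof Q$, plus mixing terms. Then I square the structure constants blockwise. The pure $\mathbf X\mathbf X\to\mathbf X$ and $\mathbf Y\mathbf Y\to\mathbf Y$ blocks, combined with the Killing part, assemble to $\Phi(P)+\Phi(Q)$. This is the standard three-dimensional formula for a left-invariant metric on $SU(2)$ with structure matrix $P$. The $\mathbf X\mathbf X\to\mathbf Y$ block gives $-\frac12\|R\|_{\Frob}^2$. The mixed $\mathbf X\mathbf Y$ blocks, together with the remaining Killing contributions, give $-\frac12\tr(M^TM\Lop(P))$, where the terms $\tr(P^2)I-P^2$ and $-2\cof P$ appear from the squared-norm and the cross terms respectively.

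\textbf{Main obstacle.} The hard part is the bookkeeping of the mixed terms: verifying that everything involving $M$ outside $R$ collapses into the single matrix $\Lop(P)$. I would first verify the formula at $M=0$, where it reduces to the product formula. I would then check the quadratic-in-$M$ terms by using Proposition~\ref{prop:equivariance} to take $M$ diagonal and polarizing. The identity $\Ccal_P(H)P=\ip{\cof P}{H}I-(\cof P)H$ from Lemma~\ref{lem:cof-basic}(iii) is the tool for recognizing the $\cof P$ contributions. As a sanity check, I would also confirm both sides on $g_{\rm can}$ and $g_{\rm NK}$.
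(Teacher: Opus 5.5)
Your overall route is the paper's: the unimodular formula in the graph orthonormal frame, Lemma~\ref{lem:cross-cof}, and $D^{-1}\cof D=P$, $E^{-1}\cof E=Q$. The gap is that Step~1 starts from the wrong frame, and the bracket bookkeeping built on it is false.

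The frame $(\mathbf X,\mathbf Y)=(\mathbf E,\mathbf F)A^T$ is read off from the \emph{rows} of $A$. Thus $X_i=\mathbf E De_i$ lies entirely in the first factor, and the mixing sits in $Y_i=\mathbf E DM^Te_i+\mathbf F Ee_i$. You placed the mixing in $X_i$ (toward $\mathbf F$) and made $Y_i$ pure, which is an orthonormal frame for a different metric. For instance, with $P=I$ and $Q=qI$, your frame gives $[X_i,X_j]=\mathbf X w+q^{-1}\mathbf Y(C-M)^Tw$, where $w=e_i\times e_j$. Its $\mathbf Y$-part is not $R^Tw=(C-qM)^Tw$, and the resulting scalar curvature contains $-\frac1{2q^2}\norm{C-M}_{\Frob}^2$ in place of $-\frac12\norm{C-qM}_{\Frob}^2$. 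So executing your Step~1 cannot give \eqref{eq:scalar}.

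In the correct frame, use $\mathbf E=\mathbf XD^{-1}$, $\mathbf F=(\mathbf Y-\mathbf XM^T)E^{-1}$ and the multiplicativity $\cof(DM^T)=(\cof D)C^T$. This gives
\[
[X_i,X_j]=\mathbf XPw,\qquad
[X_i,Y_j]=\mathbf XP(e_i\times M^Te_j),\qquad
[Y_i,Y_j]=\mathbf XR^Tw+\mathbf YQw .
\]
So $[X_i,X_j]$ carries no $R$ at all; $R$ comes from $[Y_i,Y_j]$; and the mixed brackets involve $P$ and $M$ but not $Q$. The three bracket sums are:
\begin{itemize}
\item $2\tr(P^2)$;
\item $\tr\bigl(M^TM(\tr(P^2)I-P^2)\bigr)$, counted twice over ordered pairs;
\item $2\norm{R}_{\Frob}^2+2\tr(Q^2)$.
\end{itemize}
The Killing term is $\norm{A}_{\Frob}^2=\tr(\cof P)+\tr(M^TM\cof P)+\tr(\cof Q)$. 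The $-2\cof P$ inside $\Lop(P)$ is exactly the Killing contribution $\norm{MD}_{\Frob}^2$, not a bracket term. Lemma~\ref{lem:cof-basic}(iii) plays no role here.

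Your fallback in the ``main obstacle'' paragraph would not close the gap either. The right side of \eqref{eq:scalar} is not quadratic in $M$: $\norm{CP-QM}_{\Frob}^2$ contains $\norm{CP}_{\Frob}^2$, which is quartic in $M$, and $-2\ip{CP}{QM}$, which is cubic. Checking $M=0$ and the quadratic terms therefore leaves these unverified. Polarization is a device for quadratic forms and cannot pass from diagonal to general $M$ here. Equivariance (Proposition~\ref{prop:equivariance}) does reduce to diagonal $M$ with arbitrary $P,Q$, or, as in the paper, to diagonal $P,Q$ with arbitrary $M$. Either way you still have to carry out the full computation above. Spot checks at $g_{\rm can}$ and $g_{\rm NK}$ are not a substitute.
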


\begin{proof}
By Proposition~\ref{prop:equivariance}, the $SO(3)\times SO(3)$ action independently conjugates $P$ and $Q$, while replacing $M$ by $VMU^T$.  We may therefore choose $U$ and $V$ that diagonalize $P$ and $Q$, respectively; no simultaneous diagonalization of $M$ is being assumed.  Both sides of \eqref{eq:scalar} are invariant under this action, so it suffices to compute with
\[
P=\diag(p_1,p_2,p_3),\qquad Q=\diag(q_1,q_2,q_3),
\]
with $M$ still arbitrary.
For each cyclic permutation $(i,j,k)$ of $(1,2,3)$, let
\[
d_i=\sqrt{p_jp_k},\qquad e_i=\sqrt{q_jq_k}.
\]
so $D=\diag(d_i)$ and $E=\diag(e_i)$.  The graph orthonormal frame is
\[
X_i=d_iE_i,
\qquad
Y_i=\sum_{r=1}^3m_{ir}d_rE_r+e_iF_i.
\]
A Lie group is called \emph{unimodular} if $\tr(\operatorname{ad}_X)=0$ for every element $X$ of its Lie algebra, where $\operatorname{ad}_X(Y)=[X,Y]$.  Every compact Lie group is unimodular, so this applies to $G$.  For a unimodular Lie group and an orthonormal frame $(Z_a)$, the standard scalar-curvature formula (see, for example, \cite{Milnor}) is
\begin{equation}\label{eq:unimod-scalar}
S=-\frac14\sum_{a,b}\norm{[Z_a,Z_b]}_g^2
-\frac12\sum_a \Kill(Z_a,Z_a).
\end{equation}
Since $\Kill=-2\ip{\cdot}{\cdot}_0$, the Killing-form term in \eqref{eq:unimod-scalar} is
\[
\tr(\cof P)+\tr(M^TM\cof P)+\tr(\cof Q).
\]
Furthermore
\[
A^{-T}=\begin{pmatrix}D^{-1}&-M^TE^{-1}\\0&E^{-1}\end{pmatrix}.
\]
Using Lemma~\ref{lem:cross-cof} and $D^{-1}\cof D=P$, $E^{-1}\cof E=Q$, one obtains
\begin{align*}
\sum_{i,j}\norm{[X_i,X_j]}_g^2&=2\tr(P^2),\\
\sum_{i,j}\norm{[X_i,Y_j]}_g^2
&=\tr\Bigl(M^TM\bigl(\tr(P^2)I-P^2\bigr)\Bigr),\\
\sum_{i,j}\norm{[Y_i,Y_j]}_g^2
&=2\norm{R}_{\Frob}^2+2\tr(Q^2).
\end{align*}
Substitution into \eqref{eq:unimod-scalar} gives
\begin{align*}
S={}&\tr(\cof P)-\frac12\tr(P^2)
+\tr(\cof Q)-\frac12\tr(Q^2)\\
&+\tr(M^TM\cof P)
-\frac12\tr\Bigl(M^TM(\tr(P^2)I-P^2)\Bigr)
-\frac12\norm{R}_{\Frob}^2.
\end{align*}
Using
\[
\tr(\cof P)=\frac12\bigl((\tr P)^2-\tr(P^2)\bigr)
\]
and the analogous identity for $Q$ yields \eqref{eq:scalar}.
\end{proof}

\begin{remark}
After diagonalizing $P,Q$, formula \eqref{eq:scalar} expands term-by-term to the scalar-curvature polynomial used in \cite{BCHL}; see Appendix~\ref{app:dictionary}.
\end{remark}

\section{Einstein critical equations}\label{sec:einstein-crit}

We now explain precisely why the Einstein equation gives a finite-dimensional critical-point system.  For a compact Riemannian manifold, the first variation of the Einstein--Hilbert functional (see, for example, \cite{Besse})
\[
g\longmapsto\int_G \operatorname{Scal}_g\,dV_g
\]
shows that an Einstein metric is stationary under volume-preserving metric variations, meaning that the first variation vanishes for every such variation.  We only need the necessary consequence obtained by restricting those variations to left-invariant metrics.  Because a left-invariant metric has constant scalar curvature, the Einstein--Hilbert functional on a fixed-volume family differs from the scalar curvature $S$ only by a constant factor.  Therefore every left-invariant Einstein metric is a critical point of $S$ on the corresponding set $\{V=\text{constant}\}$.  We do not need the converse in the argument below.

By Proposition~\ref{prop:volume},
\[
V(P,Q,M)=(\det P\det Q)^{-1}.
\]
For symmetric variations $H$ of $P$ and $K$ of $Q$,
\[
DV[H,K,0]
=-V\tr(P^{-1}H)-V\tr(Q^{-1}K),
\qquad D_MV=0.
\]
The gradients $\nabla_PS$, $\nabla_QS$, and $\nabla_MS$ are taken with respect to the Frobenius inner product in the corresponding matrix spaces.  The level set of $V$ is regular, since, for example, the variation $H=P$, $K=0$ gives $DV[P,0,0]=-3V\neq0$.  The Lagrange-multiplier rule therefore gives a scalar $\kappa$ such that
\begin{equation}\label{eq:EL}
\nabla_PS=\kappa P^{-1},
\qquad
\nabla_QS=\kappa Q^{-1},
\qquad
\nabla_MS=0.
\end{equation}
The homogeneity $S(tP,tQ,M)=t^2S(P,Q,M)$ gives
\[
2S=\ip{\nabla_PS}{P}+\ip{\nabla_QS}{Q}=6\kappa,
\]
so
\begin{equation}\label{eq:kappa}
\kappa=\frac{S}{3}=2\lambda.
\end{equation}
The Einstein constant is positive.  Recall that a \emph{Killing field} is a vector field whose local flow consists of isometries.  For an Einstein metric, the standard Bochner identity for a Killing field $X$ gives
\[
\int_G\norm{\nabla X}^2=\lambda\int_G\norm{X}^2.
\]
Thus $\lambda<0$ is impossible.  If $\lambda=0$, every Killing field generated by the transitive left-translation action is parallel.  These fields span every tangent space, so the curvature tensor vanishes.  This would make the compact simply connected manifold $S^3\times S^3$ flat, which is impossible.  Therefore $\lambda>0$ and $\kappa>0$.

Set
\begin{equation}\label{eq:rho}
\rho_P=\frac{2\kappa}{\det P},
\qquad
\rho_Q=\frac{2\kappa}{\det Q}.
\end{equation}

\begin{proposition}[Complete gradient formulas]\label{prop:gradients}
Let $N=M^TM$ and $\sym Z=\tfrac12(Z+Z^T)$.  Then
\begin{align}
\nabla_PS={}&(\tr P)I-2P-(\tr N)P+\frac12(NP+PN)
+\Ccal_P(N)-\sym(C^TR),\label{eq:Pgrad}\\
\nabla_QS={}&(\tr Q)I-2Q+\frac12(RM^T+MR^T),\label{eq:Qgrad}\\
\nabla_MS={}&-M\Lop(P)-\Ccal_M(RP)+QR.\label{eq:Mgrad}
\end{align}
\end{proposition}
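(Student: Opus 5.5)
Since \eqref{eq:scalar} writes $S$ as a sum of four polynomial pieces, I will compute the Frobenius gradient of each piece separately in $P$, $Q$ and $M$. Throughout I use $C=\cof M$, $R=CP-QM$, $N=M^TM$, and the identities of Lemma~\ref{lem:cof-basic}. The plan is to treat the easy terms first, then $\Lop$, and finally the quadratic term $-\frac12\norm{R}_\Frob^2$, which couples all three variables.

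First, I differentiate $\Phi$. Since $D\Phi_H[K]=(\tr H)\tr K-2\tr(HK)$, we get $\nabla\Phi(H)=(\tr H)I-2H$. This supplies the leading terms of \eqref{eq:Pgrad} and \eqref{eq:Qgrad}.

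Next, I handle $-\frac12\tr(N\Lop(P))$. Its $P$-derivative in direction $H$ is
\[
-\tfrac12\bigl[2\tr(PH)\tr N-\tr(N(PH+HP))-2\tr(N\,\Ccal_P(H))\bigr].
\]
Because $\Ccal_P$ is self-adjoint by Lemma~\ref{lem:cof-basic}(ii), the gradient is $-(\tr N)P+\frac12(NP+PN)+\Ccal_P(N)$. Here $\Ccal_P(N)$ is symmetric, since $P$ and $N$ are symmetric and $\cof$ commutes with transposition. This matches the corresponding terms of \eqref{eq:Pgrad}. In $M$, the derivative of $N$ is $H^TM+M^TH$, so this term contributes $-M\Lop(P)$, using the symmetry of $\Lop(P)$.

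Then I treat $-\frac12\norm{R}^2$, with $DR=\ip{R}{DR}$ applied to each variable in turn. The variations are:
\begin{itemize}
\item[(a)] In $P$: $DR[H]=CH$, giving $-\sym(C^TR)$ after symmetrizing, because $H$ ranges over symmetric matrices.
\item[(b)] In $Q$: $DR[K]=-KM$, giving $+\sym(RM^T)=\frac12(RM^T+MR^T)$.
\item[(c)] In $M$: $DR[H]=\Ccal_M(H)P-QH$. Then $\ip{R}{\Ccal_M(H)P}=\ip{RP}{\Ccal_M(H)}=\ip{\Ccal_M(RP)}{H}$ by self-adjointness, and $\ip{R}{QH}=\ip{QR}{H}$. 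This yields $-\Ccal_M(RP)+QR$.
\end{itemize}

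Summing all contributions gives \eqref{eq:Pgrad}--\eqref{eq:Mgrad}. The only delicate points are bookkeeping ones. Gradients in $P$ and $Q$ must be projected to $\Sym_3$, which is why $\sym$ appears. One must also confirm that $\Ccal_P(N)$ needs no symmetrization. The main obstacle is the $M$-gradient of $\norm{R}^2$, because $\cof M$ is quadratic in $M$. The self-adjointness of the Hessian $\Ccal_M$ is exactly what turns this term into the compact expression $\Ccal_M(RP)$.
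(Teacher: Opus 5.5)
Your proposal is correct and follows the paper's proof essentially line by line. Like the paper, you differentiate $\Phi$, $-\tfrac12\tr(N\Lop(P))$ and $-\tfrac12\norm{R}_{\Frob}^2$ separately, and use the self-adjointness of $\Ccal_P$ and $\Ccal_M$ from Lemma~\ref{lem:cof-basic} to move the Hessian of $\det$ onto $N$ and onto $RP$. You are slightly more explicit than the paper, which leaves implicit both the $-M\Lop(P)$ contribution to $\nabla_MS$ and the symmetry of $\Ccal_P(N)$. One notational slip: your ``$DR=\ip{R}{DR}$'' should read $D(\tfrac12\norm{R}_{\Frob}^2)=\ip{R}{DR}$.
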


\begin{proof}
For a symmetric variation $H$,
\[
D\Phi_P[H]=\ip{(\tr P)I-2P}{H},
\]
and
\[
D\Lop_P[H]
=2\tr(PH)I-(PH+HP)-2\Ccal_P(H).
\]
Using self-adjointness of $\Ccal_P$ gives
\begin{align*}
-\frac12\tr\bigl(ND\Lop_P[H]\bigr)
={}&-(\tr N)\tr(PH)
+\frac12\tr\bigl(N(PH+HP)\bigr)
+\ip{\Ccal_P(N)}{H}.
\end{align*}
Since $D_PR[H]=CH$, differentiating the last squared-norm term with respect to $P$ gives
$-\ip{R}{CH}=\ip{-\sym(C^TR)}{H}$, proving \eqref{eq:Pgrad}.  Similarly $D_QR[K]=-KM$ gives \eqref{eq:Qgrad}.  Finally, for an arbitrary variation $H$ of $M$,
\[
D_MR[H]=\Ccal_M(H)P-QH,
\]
so
\begin{align*}
-\ip{R}{D_MR[H]}
&=-\ip{RP}{\Ccal_M(H)}+\ip{QR}{H}\\
&=\ip{-\Ccal_M(RP)+QR}{H},
\end{align*}
which yields \eqref{eq:Mgrad}.
\end{proof}

If $t>0$, the coordinate change $(P,Q,M)\mapsto(tP,tQ,M)$ multiplies the matrix $A$ of Proposition~\ref{prop:graph} by $t$ and therefore replaces the metric by the homothetic metric $t^{-2}g$.  Under this homothety one has $\kappa\mapsto t^2\kappa$ and
\[
\rho_P\mapsto\rho_P/t,
\qquad
\rho_Q\mapsto\rho_Q/t.
\]
Thus, whenever $\rho_P>0$, we may choose the overall scale of the metric so that
\begin{equation}\label{eq:rhoPnorm}
\rho_P=1.
\end{equation}

\section{A radial inequality and the first eigenvalue bound}\label{sec:first-bound}

In this and the next several sections, a \emph{strict spectral upper bound} of the form $H\prec4I$ is simply a matrix-order statement in the sense fixed in Section~\ref{sec:linear}; it says that every eigenvalue of the symmetric matrix $H$ is $<4$.

\begin{lemma}[Spectrum of $\Lop(P)$]\label{lem:L-spectrum}
If the eigenvalues of $P>0$ are $p_1,p_2,p_3$, then in an eigenbasis
\[
\Lop(P)=\diag\bigl((p_2-p_3)^2,(p_3-p_1)^2,(p_1-p_2)^2\bigr).
\]
Hence $\Lop(P)\succeq0$, with equality when $P$ is scalar, and
$\rank\Lop(P)\in\{0,2,3\}$.
\end{lemma}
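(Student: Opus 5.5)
Both sides of the claimed identity are invariant under orthogonal conjugation of $P$, so we work directly in an eigenbasis. We use the definition
\[
\Lop(H)=\tr(H^2)I-H^2-2\cof H.
\]
The compatibility with conjugation is that $\cof(UPU^T)=U(\cof P)U^T$ for $U\in SO(3)$. This follows from $\cof A=(\det A)A^{-T}$ for invertible $A$, and from polynomial continuity in general. Hence $\Lop(UPU^T)=U\Lop(P)U^T$, and it suffices to compute $\Lop(P)$ for $P=\diag(p_1,p_2,p_3)$.

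For diagonal $P$, every term in $\Lop(P)$ is diagonal. Indeed $\cof P=\diag(p_2p_3,\,p_3p_1,\,p_1p_2)$, straight from the signed-minor definition, and $P^2=\diag(p_i^2)$. Take $(i,j,k)$ cyclic. The $i$-th diagonal entry is
\[
(p_1^2+p_2^2+p_3^2)-p_i^2-2p_jp_k=p_j^2+p_k^2-2p_jp_k=(p_j-p_k)^2.
\]
This gives the displayed formula $\Lop(P)=\diag\bigl((p_2-p_3)^2,(p_3-p_1)^2,(p_1-p_2)^2\bigr)$. Positive definiteness of $P$ plays no role in this computation; it only guarantees real eigenvalues via symmetry.

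The consequences are then read off from the eigenvalues. All entries are squares, so $\Lop(P)\succeq0$. If $P$ is scalar, all differences vanish and $\Lop(P)=0$; conversely, $\Lop(P)=0$ forces $p_1=p_2=p_3$.

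For the rank, count the distinct values among the $p_i$:
\begin{itemize}
\item all three equal gives rank $0$;
\item exactly two equal, say $p_1=p_2\neq p_3$, makes exactly one entry vanish, giving rank $2$;
\item all distinct makes every entry nonzero, giving rank $3$.
\end{itemize}
Rank $1$ would require two of the differences to vanish while the third does not, which is impossible. So $\rank\Lop(P)\in\{0,2,3\}$.

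There is no real obstacle. The only point needing care is justifying the conjugation-equivariance of $\cof$ for $U\in SO(3)$, so that the reduction to the eigenbasis is legitimate.
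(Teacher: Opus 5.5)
Your proof is correct and follows essentially the same route as the paper: reduce to an eigenbasis using the orthogonal equivariance of $P\mapsto P^2$, $\cof P$ and $\tr(P^2)I$, read off the diagonal entries $(p_j-p_k)^2$, and count the vanishing entries to obtain $\rank\Lop(P)\in\{0,2,3\}$. Your equivariance check for $\cof$ in fact holds for every orthogonal $U$, not only for $U\in SO(3)$, although restricting to $SO(3)$ is harmless because the diagonalizing matrix can always be chosen there.
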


\begin{proof}
Orthogonally diagonalize $P$.  The operations $P\mapsto P^2$, $P\mapsto\cof P$, and $P\mapsto\tr(P^2)I$ are equivariant under orthogonal conjugation, so it is enough to substitute $P=\diag(p_1,p_2,p_3)$ into the definition of $\Lop$.  This gives the displayed diagonal matrix.  Its entries are nonnegative.  They all vanish exactly when $p_1=p_2=p_3$; if exactly two eigenvalues agree, precisely one displayed entry vanishes, while if the eigenvalues are pairwise distinct none vanishes.  Thus the only possible ranks are $0,2,3$.
\end{proof}

Define the nonnegative scalar
\begin{equation}\label{eq:ell}
\ell=\tr(M^TM\Lop(P))
=\norm{M\Lop(P)^{1/2}}_{\Frob}^2\ge0.
\end{equation}
The \emph{zero-$\ell$ locus} is the set $\{\ell=0\}$.
We call
\begin{equation}\label{eq:full-rank-positive-ell-region}
\det M\neq0,\qquad \ell>0
\end{equation}
the \emph{full-rank, positive-$\ell$ region}.

Pair the $M$-equation with $M$, corresponding to the radial variation $M\mapsto(1+t)M$. Set
\[
A_0=CP,
\qquad B_0=QM,
\qquad R=A_0-B_0,
\]
\[
x=\norm{A_0}_{\Frob},
\qquad y=\norm{B_0}_{\Frob},
\qquad c=\ip{A_0}{B_0},
\]
and
\[
\alpha=\ip{R}{A_0}=x^2-c,
\qquad
\beta=\ip{R}{B_0}=c-y^2.
\]

\begin{proposition}[Radial identity and strict estimate]\label{prop:radial}
If $\nabla_MS=0$, then
\begin{equation}\label{eq:radial}
\ell+2\alpha-\beta=0.
\end{equation}
If in addition $\rank M\ge2$ and $\ell>0$, then
\begin{equation}\label{eq:alpha-strict}
\alpha<-\ell.
\end{equation}
\end{proposition}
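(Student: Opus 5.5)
The plan is to pair the $M$-gradient formula \eqref{eq:Mgrad} of Proposition~\ref{prop:gradients} with $M$ itself. This is exactly the radial variation $M\mapsto(1+t)M$. I will evaluate the three resulting terms separately using Lemma~\ref{lem:cof-basic}, and then read off the strict estimate from a short algebraic rearrangement.

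\emph{Step 1: the radial identity.} Since $\nabla_MS=0$, we have $0=\ip{\nabla_MS}{M}$, and I expand the right-hand side term by term.
\begin{itemize}
\item[$\cdot$] The first term is $-\ip{M\Lop(P)}{M}=-\tr(M^TM\Lop(P))=-\ell$, by the definition \eqref{eq:ell}.
\item[$\cdot$] For the second term, self-adjointness of $\Ccal_M$ (Lemma~\ref{lem:cof-basic}(ii)) and $\Ccal_M(M)=2\cof M=2C$ (Lemma~\ref{lem:cof-basic}(iv)) give
\[
-\ip{\Ccal_M(RP)}{M}=-\ip{RP}{\Ccal_M(M)}=-2\ip{RP}{C}.
\]
Because $P$ is symmetric, $\ip{RP}{C}=\tr(PR^TC)=\ip{R}{CP}=\ip{R}{A_0}=\alpha$. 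So this term equals $-2\alpha$.
\item[$\cdot$] For the last term, symmetry of $Q$ gives $\ip{QR}{M}=\ip{R}{QM}=\ip{R}{B_0}=\beta$.
\end{itemize}
Summing, $-\ell-2\alpha+\beta=0$, which is \eqref{eq:radial}.

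\emph{Step 2: the strict estimate.} From the definitions of $\alpha$ and $\beta$,
\[
\alpha-\beta=x^2-2c+y^2=\norm{A_0-B_0}_{\Frob}^2=\norm{R}_{\Frob}^2 .
\]
Substituting $\beta=\ell+2\alpha$ from \eqref{eq:radial} yields $-\alpha-\ell=\norm{R}_{\Frob}^2$, that is,
\[
\alpha=-\ell-\norm{R}_{\Frob}^2 .
\]
Hence $\alpha<-\ell$ holds exactly when $R\neq0$. Suppose instead that $R=0$. Then $\alpha=\ip{R}{A_0}=0$ and $\beta=\ip{R}{B_0}=0$, so \eqref{eq:radial} forces $\ell=0$, contradicting $\ell>0$. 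Therefore $R\neq0$ and \eqref{eq:alpha-strict} follows.

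The hypothesis $\rank M\ge2$ does not appear to be needed in this argument. It guarantees $C=\cof M\neq0$, so $A_0$ is not trivially zero, which is presumably why it is recorded for later use. The only delicate point I expect is the bookkeeping in Step 1: one must move $\Ccal_M$ across the inner product by self-adjointness and move $P$ and $Q$ across by their symmetry. Everything else is elementary.
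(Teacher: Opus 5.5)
Your proposal is correct. Step 1 is exactly the paper's pairing of \eqref{eq:Mgrad} with $M$. Step 2 takes a different and cleaner route.

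\textbf{The paper's route.} It rewrites the radial identity as $3c=2x^2+y^2+\ell$ and applies Cauchy--Schwarz, $c\le xy$, to get $\ell\le(y-x)(2x-y)$. From this it deduces $x<y<2x$, hence $y^2-x^2>2\ell$, and finally $\alpha=(x^2-y^2-\ell)/3<-\ell$. It invokes $\rank M\ge2$ along the way to ensure $x>0$.

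\textbf{Your route.} You use the exact expansion $\alpha-\beta=\norm{A_0-B_0}_{\Frob}^2=\norm{R}_{\Frob}^2$. Combined with the radial identity, this gives the closed formula $\alpha=-\ell-\norm{R}_{\Frob}^2$. Equivalently, $y^2-x^2-2\ell=3\norm{R}_{\Frob}^2$, so the paper's inequality chain is this identity in disguise.

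\textbf{Comparison.} Your version is shorter and sharper: it shows $\alpha\le-\ell$ always, with equality iff $R=0$, and $R=0$ is excluded by $\ell>0$. As you observe, it also makes $\rank M\ge2$ unnecessary. That hypothesis is not essential in the paper's argument either, since $\ell>0$ together with $\ell\le(y-x)(2x-y)$ already forces $x<y<2x$, hence $x>0$. The only extra output of the paper's route is the bracketing $x<y<2x$, which is never used later; Corollary~\ref{cor:K-negative} needs only $\alpha<-\ell$.
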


\begin{proof}
Pair \eqref{eq:Mgrad} with $M$.  Lemma~\ref{lem:cof-basic} gives
\[
\ip{\Ccal_M(RP)}{M}
=\ip{RP}{\Ccal_M(M)}
=2\ip{RP}{C}=2\alpha,
\]
while $\ip{QR}{M}=\beta$.  This proves \eqref{eq:radial}.  Equivalently,
\begin{equation}\label{eq:cidentity}
3c=2x^2+y^2+\ell.
\end{equation}
If $\rank M\ge2$, then $C=\cof M\neq0$ and hence $x>0$.  Cauchy--Schwarz yields
\[
2x^2+y^2+\ell=3c\le3xy,
\]
so
\begin{equation}\label{eq:xyproduct}
\ell\le(y-x)(2x-y).
\end{equation}
Since $\ell>0$, this forces $x<y<2x$.  Moreover $x+y>2(2x-y)$, and therefore
\[
y^2-x^2=(y-x)(x+y)>2(y-x)(2x-y)\ge2\ell.
\]
Using \eqref{eq:cidentity},
\[
\alpha=x^2-c=\frac{x^2-y^2-\ell}{3}<-\ell.
\]
\end{proof}

Define the polynomial matrix map
\begin{equation}\label{eq:Kop}
\Kop(H)=\cof H-2\tr(H)I+4H.
\end{equation}
For symmetric $H$, the three-dimensional Cayley--Hamilton identity gives
\[
\cof H=H^2-(\tr H)H+\frac12\bigl((\tr H)^2-\tr(H^2)\bigr)I.
\]
Thus $\Kop(H)$ is a matrix polynomial in $H$ and hence commutes with $H$.

\begin{proposition}[Cofactor identity]\label{prop:master}
At a graph triple satisfying the Einstein equations \eqref{eq:EL}, let
\[
\widehat Q=\rho_QQ.
\]
Then
\begin{equation}\label{eq:KQ}
\Kop(\widehat Q)=\rho_Q(RM^T+MR^T),
\end{equation}
and
\begin{equation}\label{eq:master}
\frac1{\rho_Q}Q\Kop(\widehat Q)
=M\Lop(P)M^T+\alpha I-RR^T.
\end{equation}
\end{proposition}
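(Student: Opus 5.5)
The plan is to derive \eqref{eq:KQ} from the $Q$-equation in \eqref{eq:EL} by a rescaling, and then to obtain \eqref{eq:master} by multiplying \eqref{eq:KQ} on the left by $Q$ and eliminating $QR$ with the $M$-equation together with Lemma~\ref{lem:cof-basic}\,(iii).

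For \eqref{eq:KQ}, rewrite the right-hand side of the $Q$-equation with $\kappa Q^{-1}=\kappa\,\cof Q/\det Q=\tfrac{\rho_Q}{2}\cof Q$, using \eqref{eq:rho}. By \eqref{eq:Qgrad}, the $Q$-equation then reads
\[
\rho_Q\cof Q=2(\tr Q)I-4Q+(RM^T+MR^T).
\]
Multiply by $\rho_Q$ and use homogeneity: $\cof(\rho_QQ)=\rho_Q^2\cof Q$, $\tr\widehat Q=\rho_Q\tr Q$ and $4\widehat Q=4\rho_QQ$. This gives exactly
\[
\Kop(\widehat Q)=\rho_Q^2\cof Q-2\rho_Q(\tr Q)I+4\rho_QQ=\rho_Q(RM^T+MR^T),
\]
which is \eqref{eq:KQ}.

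For \eqref{eq:master}, left-multiply \eqref{eq:KQ} by $\rho_Q^{-1}Q$. The left side becomes $\rho_Q^{-1}Q\Kop(\widehat Q)$, and the right side becomes $(QR)M^T+QMR^T$. By \eqref{eq:Mgrad}, the equation $\nabla_MS=0$ gives
\[
QR=M\Lop(P)+\Ccal_M(RP),
\]
so
\[
QRM^T=M\Lop(P)M^T+\Ccal_M(RP)M^T.
\]
Apply Lemma~\ref{lem:cof-basic}\,(iii) with $A=M$ and $H=RP$:
\[
\Ccal_M(RP)M^T=\ip{C}{RP}I-C(RP)^T=\ip{CP}{R}I-CPR^T=\alpha I-A_0R^T,
\]
using $P=P^T$ and $\alpha=\ip{R}{A_0}$.

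Finally $QMR^T=B_0R^T$, so the two $R^T$ terms combine to $-(A_0-B_0)R^T=-RR^T$. This yields \eqref{eq:master}.

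As a consistency check, the right side of \eqref{eq:master} is symmetric. The left side is symmetric as well, because $\Kop(\widehat Q)$ is a polynomial in $Q$ and so commutes with $Q$.

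There is no real obstacle here. The only points needing care are the trace-pairing identity $\ip{C}{RP}=\ip{CP}{R}$ and checking that the sign convention of Lemma~\ref{lem:cof-basic}\,(iii) produces $-A_0R^T$, so that it cancels correctly against $+B_0R^T$.
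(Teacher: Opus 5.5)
Your proposal is correct and follows the paper's proof essentially step for step. You obtain \eqref{eq:KQ} by writing $2\kappa Q^{-1}=\rho_Q\cof Q$ in the $Q$-equation and rescaling by $\rho_Q$, and you obtain \eqref{eq:master} from the $M$-equation via Lemma~\ref{lem:cof-basic}\,(iii) and the cancellation $-CPR^T+QMR^T=-RR^T$. The only cosmetic difference is the order: you left-multiply \eqref{eq:KQ} by $\rho_Q^{-1}Q$ first, whereas the paper first derives $Q(RM^T+MR^T)=M\Lop(P)M^T+\alpha I-RR^T$ and then combines it with \eqref{eq:KQ}.
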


\begin{proof}
From \eqref{eq:Qgrad} and the Einstein equation,
\[
2\tr(Q)I-4Q+RM^T+MR^T=2\kappa Q^{-1}.
\]
Since $2\kappa Q^{-1}=\rho_Q\cof Q$, multiplying by $\rho_Q$ gives \eqref{eq:KQ}.  On the other hand, \eqref{eq:Mgrad} gives
\[
QR=M\Lop(P)+\Ccal_M(RP).
\]
Right-multiplying by $M^T$ and using Lemma~\ref{lem:cof-basic},
\begin{align*}
QRM^T
&=M\Lop(P)M^T+\Ccal_M(RP)M^T\\
&=M\Lop(P)M^T+\ip{C}{RP}I-CPR^T\\
&=M\Lop(P)M^T+\alpha I-CPR^T.
\end{align*}
Adding $QMR^T$ and using $QM-CP=-R$ gives
\[
Q(RM^T+MR^T)=M\Lop(P)M^T+\alpha I-RR^T.
\]
Combining with \eqref{eq:KQ} proves \eqref{eq:master}.
\end{proof}

\begin{corollary}\label{cor:K-negative}
At a graph triple satisfying the Einstein equations \eqref{eq:EL}, set
$\widehat Q=\rho_QQ$.  If $\rank M\ge2$ and $\ell>0$, then
\[
\Kop(\widehat Q)\prec0.
\]
\end{corollary}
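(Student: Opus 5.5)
The strategy is to read off the sign of $\Kop(\widehat Q)$ from the cofactor identity \eqref{eq:master}, after conjugating by $Q^{1/2}$. Write $\Kop=\Kop(\widehat Q)$. Since $\Kop$ is a matrix polynomial in $\widehat Q$, and hence in $Q$, it commutes with $Q$. Therefore $Q\Kop=Q^{1/2}\Kop Q^{1/2}$ is symmetric, and it is congruent to $\Kop$. By Sylvester's law of inertia it suffices to prove
\[
\frac1{\rho_Q}Q\Kop\prec0,
\]
using $\rho_Q>0$, which follows from $\kappa>0$ and $\det Q>0$. By \eqref{eq:master}, this reduces to the matrix inequality
\[
M\Lop(P)M^T+\alpha I-RR^T\prec0.
\]

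We bound the two positive-semidefinite pieces separately.

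\begin{enumerate}[label=\textup{(\roman*)}]
\item By Lemma~\ref{lem:L-spectrum}, $M\Lop(P)M^T=(M\Lop(P)^{1/2})(M\Lop(P)^{1/2})^T\succeq0$. Its largest eigenvalue is at most its trace, which is $\norm{M\Lop(P)^{1/2}}_{\Frob}^2=\ell$. Hence $M\Lop(P)M^T\preceq\ell I$.
\item Trivially $-RR^T\preceq0$.
\end{enumerate}

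Adding these gives
\[
M\Lop(P)M^T+\alpha I-RR^T\preceq(\ell+\alpha)I.
\]
Under the hypotheses $\rank M\ge2$ and $\ell>0$ (and $\nabla_MS=0$, which is part of \eqref{eq:EL}), Proposition~\ref{prop:radial} gives $\alpha<-\ell$, so $\ell+\alpha<0$. The right-hand side is therefore negative definite, hence so is $Q\Kop$, and by congruence so is $\Kop(\widehat Q)$.

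The only delicate point is the passage from $Q\Kop\prec0$ back to $\Kop\prec0$. This is why we use that $\Kop(\widehat Q)$ commutes with $Q$, so that $Q\Kop$ is a genuine symmetric congruence of $\Kop$. Everything else is immediate from the strict radial estimate \eqref{eq:alpha-strict}.
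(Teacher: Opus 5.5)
Your proposal is correct and follows essentially the same route as the paper: bound $M\Lop(P)M^T$ by $\ell I$ via its trace, drop $-RR^T$, invoke $\alpha<-\ell$ from Proposition~\ref{prop:radial}, and transfer negativity from $Q\Kop(\widehat Q)$ back to $\Kop(\widehat Q)$ using commutativity with $Q$. The only cosmetic difference is that you use the congruence $Q^{1/2}\Kop(\widehat Q)Q^{1/2}$ where the paper uses simultaneous diagonalization.
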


\begin{proof}
Put $H_0=M\Lop(P)M^T\succeq0$.  Since $\tr H_0=\ell$, one has $\lambda_{\max}(H_0)\le\ell$.  Proposition~\ref{prop:radial} gives $H_0+\alpha I\prec0$, and subtracting $RR^T\succeq0$ preserves strict negativity.  Thus $Q\Kop(\widehat Q)\prec0$.  The matrices $Q$ and $\Kop(\widehat Q)$ commute, because $\widehat Q$ is a positive scalar multiple of $Q$ and $\Kop$ is polynomial in its argument.  Since $Q\succ0$, simultaneous diagonalization then shows that $Q\Kop(\widehat Q)\prec0$ is equivalent to $\Kop(\widehat Q)\prec0$.
\end{proof}

\begin{lemma}[The threshold $4$]\label{lem:four}
If $H\in\Sym_3^+$ and $\Kop(H)\prec0$, then
\[
H\prec4I.
\]
\end{lemma}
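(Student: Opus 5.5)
Since $\Kop$ is equivariant under orthogonal conjugation, I would diagonalize $H=\diag(h_1,h_2,h_3)$ with all $h_i>0$ and reduce the claim to three scalar inequalities. For $(i,j,k)$ a permutation of $(1,2,3)$, the $i$-th diagonal entry of $\Kop(H)$ is
\[
h_jh_k-2(h_1+h_2+h_3)+4h_i=(h_j-2)(h_k-2)+2h_i-4 .
\]
So the hypothesis $\Kop(H)\prec0$ says exactly that
\[
(h_j-2)(h_k-2)<4-2h_i\qquad\text{for each }i .
\]

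I would argue by contradiction. Suppose some eigenvalue is at least $4$; relabel so that $h_1\ge4$.

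\emph{Step 1.} The inequality for $i=1$ gives $(h_2-2)(h_3-2)<4-2h_1\le-4$. Hence $h_2-2$ and $h_3-2$ have strictly opposite signs. After swapping the labels $2$ and $3$ if needed, $h_2>2>h_3$, and $(h_2-2)(2-h_3)>4$.

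\emph{Step 2.} Positivity enters here. Since $h_3>0$, we have $0<2-h_3<2$, which forces $h_2-2>2$, that is, $h_2>4$.

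\emph{Step 3.} Now apply the inequality for $i=3$, namely $(h_1-2)(h_2-2)<4-2h_3$. Its left side is at least $2\cdot2=4$. Its right side is strictly less than $4$, because $h_3>0$. This is a contradiction, so every eigenvalue satisfies $h_i<4$, i.e.\ $H\prec4I$.

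The only delicate point is the choice of which diagonal entry to use after Step 1. The entry indexed by the largest eigenvalue only shows that the other two eigenvalues straddle $2$. One must then return to the entry indexed by the small eigenvalue $h_3$, using $h_3>0$ twice: once to get $h_2>4$ and once to make the right side drop below $4$. Beyond this there is no real obstacle; the argument is elementary once the factorization $(h_j-2)(h_k-2)+2h_i-4$ is written down.
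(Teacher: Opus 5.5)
Your proof is correct. It uses the same reduction as the paper: diagonalize $H$, read off the eigenvalues $h_jh_k-2h_j-2h_k+2h_i$ of $\Kop(H)$, and derive a contradiction from an eigenvalue $\ge4$. The bookkeeping differs, though. The paper orders the eigenvalues $0<r\le s\le t$ and uses only the eigenvalue $k_t$ of $\Kop(H)$ along the top eigendirection. It splits on whether $s\ge2$, where $k_t=r(s-2)+2(t-s)>0$, or $s<2$, where $k_t\ge(2-r)(2-s)+4>0$. You assume only that some eigenvalue is $\ge4$, without ordering, and so you need a second entry, the one indexed by the smallest eigenvalue $h_3$. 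Both arguments are equally elementary. The paper's is slightly shorter and shows a little more: the eigenvalue of $\Kop(H)$ along the top eigenvector is already positive.

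For the same reason your closing remark is inaccurate as stated. It is an arbitrary eigenvalue $h_1\ge4$ whose entry alone only forces $h_2,h_3$ to straddle $2$. For example, $(h_1,h_2,h_3)=(4,10,0.1)$ makes the first entry equal to $-11.2$. If $h_1$ is taken to be the largest eigenvalue, then in your Step~1 configuration $h_2>2>h_3$ you get $(h_2-2)(h_3-2)+2h_1-4\ge(h_2-2)(h_3-2)+2h_2-4=h_3(h_2-2)>0$. So the largest-eigenvalue entry suffices by itself, which is exactly what the paper exploits.
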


\begin{proof}
Let $0<r\le s\le t$ be the eigenvalues of $H$.  The eigenvalue of $\Kop(H)$ in the $t$-eigendirection is
\[
k_t=rs-2r-2s+2t.
\]
Assume $t\ge4$.  If $s\ge2$, then
\[
k_t=r(s-2)+2(t-s)>0.
\]
If $s<2$, then $r\le s<2$ and
\[
k_t\ge rs-2r-2s+8=(2-r)(2-s)+4>0.
\]
Both contradict $\Kop(H)\prec0$.
\end{proof}

\begin{corollary}[First strict spectral upper bound]\label{cor:first-bound}
At a graph triple satisfying the Einstein equations \eqref{eq:EL}, if $\rank M\ge2$ and $\ell>0$, then
\begin{equation}\label{eq:first-bound}
\rho_QQ\prec4I.
\end{equation}
\end{corollary}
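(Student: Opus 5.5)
This follows by chaining Corollary~\ref{cor:K-negative} and Lemma~\ref{lem:four}. Suppose the graph triple $(P,Q,M)$ satisfies the Einstein equations \eqref{eq:EL}, with $\rank M\ge2$ and $\ell>0$. Corollary~\ref{cor:K-negative} then gives
\[
\Kop(\widehat Q)\prec0,\qquad \widehat Q=\rho_QQ .
\]

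To apply Lemma~\ref{lem:four} we must check that $\widehat Q\in\Sym_3^+$. By Section~\ref{sec:einstein-crit} the Einstein constant is positive, so $\kappa>0$. Since $Q\succ0$ we also have $\det Q>0$. Definition \eqref{eq:rho} then gives $\rho_Q=2\kappa/\det Q>0$, and hence $\widehat Q$ is a positive multiple of a positive-definite matrix, so it is positive definite.

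Lemma~\ref{lem:four}, applied with $H=\widehat Q$, yields $\widehat Q\prec4I$, which is exactly \eqref{eq:first-bound}.

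Nothing here is an obstacle. The only point needing care is the positivity of $\rho_Q$, which depends on the earlier Bochner argument giving $\lambda>0$. The real work was already done in Proposition~\ref{prop:radial}, namely the strict estimate $\alpha<-\ell$.
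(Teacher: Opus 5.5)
Your proposal is correct and follows the paper's proof: the paper likewise applies Corollary~\ref{cor:K-negative} to get $\Kop(\rho_QQ)\prec0$ and then Lemma~\ref{lem:four}, using that $\rho_QQ$ is positive definite. You spell out why $\rho_Q=2\kappa/\det Q>0$ (from $\kappa>0$ and $Q\succ0$), which the paper leaves implicit.
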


\begin{proof}
Corollary~\ref{cor:K-negative} gives $\Kop(\rho_QQ)\prec0$, and Lemma~\ref{lem:four} applies because $\rho_QQ$ is positive definite.
\end{proof}

\section{Lower-rank cases and the zero-\texorpdfstring{$\ell$}{ell} locus}\label{sec:low-rank}

\begin{lemma}\label{lem:rank0}
If $M=0$, then $K(g)\neq\{e\}$.
\end{lemma}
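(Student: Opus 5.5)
If $M=0$, then $A(P,Q,0)=\diag(D,E)$. Hence $L=\diag(D^{-2},E^{-2})$ is block diagonal, and the metric is a product of left-invariant metrics on the two $\mathrm{SU}(2)$ factors. The plan is to write down an explicit nonidentity element of $\Ad(G)=SO(3)\times SO(3)$ that fixes such a block-diagonal $L$. No Einstein equation is needed.

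By Proposition~\ref{prop:equivariance}, the $SO(3)\times SO(3)$ action sends $(P,Q,0)$ to $(UPU^T,VQV^T,0)$. An element $(U,V)$ therefore preserves $g$ exactly when $UPU^T=P$ and $VQV^T=Q$. The reason is that the graph parametrization is a bijection (Proposition~\ref{prop:graph}), and $A$ transforms by $T A T^T$, so the metric is unchanged precisely when the graph triple is.

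Diagonalize $P$ by some $U_0\in SO(3)$, so that $P=U_0\diag(p_1,p_2,p_3)U_0^T$. Take
\[
U=U_0\diag(1,-1,-1)U_0^T\in SO(3).
\]
This $U$ is a rotation by $\pi$ about the first eigenvector of $P$, and it commutes with $P$. Now put $(U,V)=(U,I)$. It is a nonidentity element of $\Ad(G)$, it is an involution of trace $2$, and it fixes $(P,Q,0)$.

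The only points to check are that this element really lies in $\Ad(G)$ and acts as an isometry. Both follow from the identification of $\Inn(G)$ with $\Ad(G)\cong SO(3)\times SO(3)$ and from the fact that the action on graph coordinates is induced by inner automorphisms. Hence $K(g)\neq\{e\}$. In fact $K(g)$ contains the Klein four group generated by $(U,I)$ and $(I,V)$, with $V$ chosen in the same way for $Q$. I do not expect any real obstacle in this argument.
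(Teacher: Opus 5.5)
Your proposal is correct and is essentially the paper's proof: you take $U$ to be the rotation by $\pi$ about an eigenvector of $P$, set $V=I$, and observe that $(U,V)$ fixes $(P,Q,0)$, so Proposition~\ref{prop:equivariance} yields a nonidentity inner isometry. Your additional remarks, that this involution has trace $2$ and that $(U,I)$ and $(I,V)$ generate a Klein four subgroup of $K(g)$, are also correct but are not needed for the lemma.
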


\begin{proof}
Choose a unit eigenvector $u$ of the symmetric matrix $P$.  Its orthogonal complement $u^\perp$ is $P$-invariant, so the rotation $U\in SO(3)$ by $\pi$ about the axis $\mathbb Ru$ commutes with $P$.  Put $V=I$.  Then $UPU^T=P$, $VQV^T=Q$, and $VMU^T=M=0$.  Proposition~\ref{prop:equivariance} therefore gives a nontrivial inner isometry.
\end{proof}

\begin{lemma}\label{lem:rank1}
There is no left-invariant Einstein metric $g$ whose graph triple $(P,Q,M)$ satisfies $\rank M=1$.
\end{lemma}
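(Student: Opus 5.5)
The plan is to get a contradiction from the radial identity of Proposition~\ref{prop:radial} alone. No gauge fixing or case analysis should be needed. The point is that a rank-one $M$ has vanishing cofactor, and then the radial identity forces a sum of two nonnegative quantities, one of them strictly positive, to vanish.

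First, if $\rank M=1$ then every $2\times2$ minor of $M$ vanishes, so $C=\cof M=0$. In the notation of Section~\ref{sec:first-bound} this gives $A_0=CP=0$ and $R=-QM=-B_0$. Consequently
\[
x=0,\qquad c=\ip{A_0}{B_0}=0,\qquad \alpha=x^2-c=0,\qquad \beta=c-y^2=-y^2,
\]
where $y=\norm{QM}_{\Frob}$.

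Second, any left-invariant Einstein metric is a critical point of $S$ on the volume level set, as shown in Section~\ref{sec:einstein-crit}. In particular $\nabla_MS=0$ by \eqref{eq:EL}. The first part of Proposition~\ref{prop:radial}, identity \eqref{eq:radial}, carries no rank hypothesis. Its derivation only pairs \eqref{eq:Mgrad} with $M$ and uses Lemma~\ref{lem:cof-basic}, in particular $\Ccal_M(M)=2\cof M$, which here is $0$. So \eqref{eq:radial} applies and reads
\[
\ell+\norm{QM}_{\Frob}^2=0.
\]

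Third, by \eqref{eq:ell} we have $\ell\ge0$. Moreover $Q\succ0$ is invertible and $M\neq0$, so $QM\neq0$ and $\norm{QM}_{\Frob}^2>0$. Then $\ell+\norm{QM}_{\Frob}^2>0$, which contradicts the displayed identity, so no Einstein metric has $\rank M=1$.

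The only step needing care is the second one. I must check that the pairing computation in the proof of Proposition~\ref{prop:radial} does not silently use $\rank M\ge2$. Only the strict estimate \eqref{eq:alpha-strict} does, since it requires $x>0$. The identity \eqref{eq:radial} itself is purely algebraic and holds for every $M$.
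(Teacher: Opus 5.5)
Your proposal is correct and follows the paper's proof essentially verbatim: $\cof M=0$ gives $\alpha=0$ and $\beta=-\norm{QM}_{\Frob}^2$, and the radial identity \eqref{eq:radial} then forces $\ell+\norm{QM}_{\Frob}^2=0$, which is impossible since $\ell\ge0$ and $QM\neq0$. Your check that \eqref{eq:radial} needs no rank hypothesis (only the strict estimate \eqref{eq:alpha-strict} does) is accurate.
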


\begin{proof}
If $\rank M=1$, then $C=\cof M=0$ and $R=-QM$.
Consequently $\alpha=0$ and $\beta=-\norm{QM}_{\Frob}^2$.
The radial identity in Proposition~\ref{prop:radial} becomes
\[
0=\ell+2\alpha-\beta=\ell+\norm{QM}_{\Frob}^2.
\]
This is impossible: $\ell\ge0$, while $QM\neq0$ because $Q$ is
invertible and $M\neq0$.
\end{proof}

\begin{lemma}\label{lem:rank2}
There is no left-invariant Einstein metric $g$ whose graph triple $(P,Q,M)$ satisfies $\rank M=2$.
\end{lemma}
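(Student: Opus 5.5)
The plan is to work in signed-SVD gauge (Definition~\ref{def:signed-svd-gauge}). Since $\rank M=2$, Proposition~\ref{prop:equivariance} lets us take
\[
M=\diag(m_1,m_2,0),\qquad m_1,m_2>0 .
\]
Then
\[
C=\cof M=\diag(0,0,m_1m_2),
\]
which has rank one. Hence $A_0=CP=m_1m_2\,e_3(Pe_3)^T$ is supported on the third row, and $B_0=QM$ is supported on the first two columns. The goal is to combine the radial identity~\eqref{eq:radial} with further scalar consequences of $\nabla_MS=0$, and to show that together they force a nonnegative quantity to be negative, or a positive one to vanish, as in Lemma~\ref{lem:rank1}.

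\textbf{Kernel variation.} The first extra consequence comes from the variation of $M$ in its missing singular direction, $H=e_3e_3^T$. Because the third row of $M$ vanishes, $M^TH=0$ and $H^TM=0$, so the $\Lop(P)$ term of $S$ contributes nothing to first order. From $\cof\diag(a,b,c)=\diag(bc,ac,ab)$ one gets
\[
\Ccal_M(H)=\diag(m_2,m_1,0).
\]
Pairing \eqref{eq:Mgrad} with $H$ then gives
\[
\ip{R}{\diag(m_2,m_1,0)P}=\ip{R}{Qe_3e_3^T}=(QR)_{33}. \tag{$\ast$}
\]
I would expand both sides using the block supports of $A_0$ and $B_0$. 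The left side only sees the first two rows of $R$, where $R=-QM$, and the right side sees column~3 of $R$, where $R$ equals $A_0$ up to $QM$-terms that vanish. This should turn $(\ast)$ into a relation between $x^2$, $c$, and explicit entries of $P$ and $Q$.

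\textbf{Case $\ell=0$.} Here $M\Lop(P)^{1/2}=0$, so $\Lop(P)$ annihilates $e_1$ and $e_2$. By Lemma~\ref{lem:L-spectrum}, $\rank\Lop(P)\in\{0,2,3\}$, so $\Lop(P)=0$ and $P=pI$ is scalar. Then $A_0=pm_1m_2e_3e_3^T$ and $c=\ip{A_0}{QM}=0$, because $QM$ has zero third column. Equation~\eqref{eq:cidentity} then reads $0=2x^2+y^2$, which contradicts $x>0$.

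\textbf{Case $\ell>0$.} This is the case I expect to be the main obstacle. I would first use the support structure to compute $c$ directly:
\[
c=m_1m_2\sum_{j\le2}(Pe_3)_j\,(QM)_{3j}=m_1m_2\bigl(m_1p_{31}q_{31}+m_2p_{32}q_{32}\bigr).
\]
So $c$ only involves the off-diagonal couplings $p_{3j},q_{3j}$. Cauchy--Schwarz restricted to the third row then gives
\[
c\le x\,\norm{e_3^TQM}<xy
\]
(the inequality is strict unless $QM$ is supported on its third row), and this is sharper than the bound used in Proposition~\ref{prop:radial}. Next I would feed in the third-column and third-row entries of the $M$-equation, which are the $(\ast)$-type identities for $H=e_3e_j^T$ and $H=e_je_3^T$. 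The aim is to show that the $\Lop(P)$ contribution $\ell$ must be dominated by terms that \eqref{eq:cidentity} forces to have the wrong sign. If this direct route stalls, the fallback is to use Corollary~\ref{cor:first-bound} ($\rho_QQ\prec4I$) together with \eqref{eq:KQ} restricted to the $e_3$ direction, to bound $q_{33}$ and reach a contradiction with $(\ast)$.
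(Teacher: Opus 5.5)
Your proposal leaves the case $\ell>0$ unproved, and neither of the two routes you sketch closes it. The sharpened Cauchy--Schwarz bound $c\le x\,y_3$, with $y_3=\norm{e_3^TQM}$, turns \eqref{eq:cidentity} into
\[
(2x-y_3)(x-y_3)+\bigl(y^2-y_3^2\bigr)+\ell\le0 .
\]
This is satisfiable when $x<y_3<2x$. So the radial identity alone does not produce the ``wrong sign'' you are looking for, and you give no concrete way to use the extra entries of the $M$-equation.

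Your fallback goes through Corollary~\ref{cor:first-bound}, but $\rho_QQ\prec4I$ has already discarded the information you need. With $\widehat Q=\rho_QQ$, the $(3,3)$ entry of \eqref{eq:KQ} reads
\[
\widehat Q_{11}\widehat Q_{22}-\widehat Q_{12}^2-2\widehat Q_{11}-2\widehat Q_{22}+2\widehat Q_{33}=0 .
\]
This is compatible with $\widehat Q\prec4I$, e.g.\ $\widehat Q=\diag(1,1,\tfrac32)$. You also leave open how $(\ast)$ would then yield a contradiction.

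The missing step is to use Corollary~\ref{cor:K-negative} directly, i.e.\ the negative-definiteness statement before Lemma~\ref{lem:four} weakens it to $\rho_QQ\prec4I$. For $\rank M\ge2$ and $\ell>0$ it gives $\Kop(\widehat Q)\prec0$. On the other hand, \eqref{eq:KQ} with $v=e_3\in\ker M^T$ gives
\[
e_3^T\Kop(\widehat Q)e_3=2\rho_Q\,(e_3^TR)(M^Te_3)=0 ,
\]
an immediate contradiction. This is how the paper excludes $\ell>0$.

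Your $\ell=0$ case, by contrast, is correct and takes a different route from the paper. You observe that $P=pI$ makes $A_0=pm_1m_2e_3e_3^T$ orthogonal to $QM$, since the third column of $QM$ vanishes. Hence $c=0$, and \eqref{eq:cidentity} collapses to $2x^2+y^2=0$, exactly as in Lemma~\ref{lem:rank1}. The paper instead computes $(\nabla_MS)_{11}=-m_1\bigl(m_2^2p^2+Q_{11}^2+Q_{12}^2+Q_{13}^2\bigr)<0$. Your version is shorter and reuses the radial identity. The kernel variation $(\ast)$ is correct but plays no role in either case.
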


\begin{proof}
If $\ell>0$, Corollary~\ref{cor:K-negative} gives $\Kop(\widehat Q)\prec0$.  For $0\neq v\in\ker M^T$, equation \eqref{eq:KQ} gives
\[
v^T\Kop(\widehat Q)v=0,
\]
a contradiction.  Thus a hypothetical left-invariant Einstein metric with $\rank M=2$ must have $\ell=0$.  Since
\[
M\Lop(P)^{1/2}=0,
\]
we have $\operatorname{im}\Lop(P)^{1/2}\subseteq\ker M$, hence $\rank\Lop(P)\le1$.  Lemma~\ref{lem:L-spectrum} forces $\Lop(P)=0$, so $P=pI$.

Now put
\[
M=\diag(m_1,m_2,0),\qquad m_1,m_2>0.
\]
Then
\[
C=\diag(0,0,m_1m_2),\qquad R=pC-QM.
\]
Since $\Lop(P)=0$,
\[
(\nabla_MS)_{11}=-\Ccal_M(pR)_{11}+(QR)_{11}.
\]
Here
\[
\Ccal_M(pR)_{11}=m_1m_2^2p^2,
\qquad
(QR)_{11}=-m_1(Q^2)_{11},
\]
so
\[
(\nabla_MS)_{11}
=-m_1\bigl(m_2^2p^2+Q_{11}^2+Q_{12}^2+Q_{13}^2\bigr)<0,
\]
again contradicting \eqref{eq:EL}.
\end{proof}

\begin{lemma}\label{lem:full-ell0}
Let $g$ be a left-invariant Einstein metric with graph triple $(P,Q,M)$.  If $M$ is invertible and $\ell=0$, then $K(g)\neq\{e\}$.
\end{lemma}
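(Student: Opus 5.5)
Since $M$ is invertible, the vanishing $\ell=\norm{M\Lop(P)^{1/2}}_{\Frob}^2=0$ forces $\Lop(P)^{1/2}=0$. By Lemma~\ref{lem:L-spectrum} this gives $P=pI$ with $p>0$. The stabilizer of $P$ under the $SO(3)\times SO(3)$ action is then the whole first factor. So a nontrivial inner isometry is a pair $(U,V)$, not both identity, with
\[
VQV^T=Q,\qquad VMU^T=M.
\]
I will work in signed-SVD gauge, $M=\diag(m_1,m_2,m_3)$ with all $m_i\neq0$; this does not disturb $P=pI$. The natural candidates are $U=V=\diag(1,-1,-1)$ and its cyclic analogues. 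These commute with any diagonal $M$, so it suffices to show that $Q$ commutes with one of them. That is, for some index $i$ the entries $Q_{ij}$ and $Q_{ik}$ should vanish.

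To get this I would write out the off-diagonal components of the Einstein system with $P=pI$ and $\Lop(P)=0$. Set $c_i=m_jm_k$, so $C=\diag(c_1,c_2,c_3)$, and $R_{ij}=pc_i\delta_{ij}-Q_{ij}m_j$. For diagonal $M$ and $i\neq j$, the derivative of the cofactor map has the form $\Ccal_M(H)_{ij}=-m_kH_{ji}$, where $k$ is the third index. The $M$-equation $QR=p\,\Ccal_M(R)$ then gives, for $i\neq j$,
\[
(QR)_{ij}=p\,m_kQ_{ji}m_i .
\]
Its left side is linear in the first-order off-diagonal entries of $Q$, with coefficient built from $pc_j$ and the diagonal of $Q$, plus terms quadratic in the off-diagonal entries. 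I would pair this with the $(j,i)$ equation and subtract, or add, so that the quadratic contributions $\sum_r Q_{ir}Q_{rj}m_j$ appear symmetrically. The Einstein $Q$-equation \eqref{eq:KQ} supplies the same off-diagonal entries of $RM^T+MR^T$. The aim is an identity of the form
\[
(m_i^2-m_j^2)\,Q_{ij}\cdot(\text{positive quantity})=0,
\]
or, failing that, $Q_{ij}\cdot(\text{positive})=0$ directly.

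From such an identity the cases are as follows. If the $|m_i|$ are pairwise distinct, $Q$ is diagonal and $\diag(1,-1,-1)$ works. If, say, $m_1^2=m_2^2\neq m_3^2$, then $Q_{13}=Q_{23}=0$, so $Q$ is block diagonal and $\diag(-1,-1,1)$ commutes with both $Q$ and $M$. If all $m_i^2$ are equal, then $M=m\,O$ with $O$ orthogonal. In that case I take $V$ to be the rotation by $\pi$ about an eigenvector of $Q$ and set $U=M^TVM^{-T}=O^TVO\in SO(3)$; this is nontrivial. The signs of the $m_i$ need care, since signed SVD may leave $\det M<0$. In the equal-modulus case $O$ may be orientation reversing, but $O^TVO$ is still a rotation by $\pi$.

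The main obstacle is the algebra in the middle step: extracting from the off-diagonal $M$- and $Q$-equations a factorization with a manifestly sign-definite cofactor. I would first try the antisymmetric combination $(ij)-(ji)$ of the $M$-equation, which isolates $Q_{ij}$ times an expression in $p$, the $m$'s and the $Q_{ii}$. I would then use the diagonal entries of \eqref{eq:KQ} to control the sign of that expression, possibly together with the radial identity \eqref{eq:cidentity}, which here reads $3c=2x^2+y^2$.
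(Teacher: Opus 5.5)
\textbf{The gap.} Your reduction to $P=pI$, the passage to signed-SVD gauge, and the plan to finish with a simultaneous coordinate $\pi$-rotation all match the paper. The gap is the middle step, which you explicitly leave open. You never derive an identity that forces the off-diagonal entries of $Q$ to vanish. Moreover, the form you hope for, $(m_i^2-m_j^2)Q_{ij}\cdot(\text{positive})=0$, does not arise.

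If you finish your own computation, the $p$-terms cancel identically. Since $c_j=m_im_k$, the $(i,j)$ entry of $QR=p\,\Ccal_M(R)$ reads $p\,m_im_kQ_{ij}-m_j(Q^2)_{ij}=p\,m_im_kQ_{ij}$. Because $m_j\neq0$, this says $(Q^2)_{ij}=0$. Written out, it is $(Q_{ii}+Q_{jj})Q_{ij}+Q_{ik}Q_{kj}=0$. The cross term $Q_{ik}Q_{kj}$ means no single entry yields $Q_{ij}\cdot(\text{positive})=0$, and the $(ij)\pm(ji)$ combinations merely reproduce the same equation. Your route can be rescued, but only by an observation the proposal does not contain: $Q^2$ is diagonal, so $Q=(Q^2)^{1/2}$ is diagonal by uniqueness of the positive square root.

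\textbf{How the paper closes this step.} The paper uses the $P$-equation, which you never invoke. At $P=pI$ the right-hand side $\kappa P^{-1}=(\kappa/p)I$ is diagonal. Every term of \eqref{eq:Pgrad} except $-\sym(C^TR)$ is also diagonal; for instance $\Ccal_{pI}(N)=p\bigl((\tr N)I-N\bigr)$ with $N=M^TM$ diagonal. Using $R=pC-QM$, one finds $(\nabla_PS)_{jk}=\tfrac12m_i(m_j^2+m_k^2)Q_{jk}$ for cyclic $(i,j,k)$. Hence $Q_{jk}=0$, since $m_i\neq0$.

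Either way $Q$ is diagonal outright. Your case split on the moduli $|m_i|$ is therefore unnecessary; this includes the equal-modulus construction $U=O^TVO$, which is correct but superfluous. The $Q$-equation and the radial identity are also not needed.
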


\begin{proof}
Since $M^TM\succ0$, $\Lop(P)\succeq0$, and
$\tr(M^TM\Lop(P))=0$, we have $\Lop(P)=0$, hence $P=pI$.
Choose signed-SVD gauge $M=\diag(m_1,m_2,m_3)$, where every $m_i$
is nonzero. The off-diagonal entries of \eqref{eq:Pgrad}, evaluated
at $P=pI$, are
\[
(\nabla_PS)_{jk}=\frac12m_i(m_j^2+m_k^2)Q_{jk},
\qquad (i,j,k)\text{ cyclic}.
\]
Indeed, all terms in \eqref{eq:Pgrad} except
$-\sym(C^TR)$ are diagonal at $P=pI$; using
$C=\cof M$ and $R=pC-QM$ gives the displayed formula.
The right-hand side $\kappa P^{-1}$ of the Einstein equation is
diagonal, so $Q_{jk}=0$ for every $j\neq k$.
Thus $P,Q,M$ are simultaneously diagonal. A simultaneous coordinate
$\pi$-rotation in the two factors fixes all three matrices and yields
a nonidentity inner isometry by Proposition~\ref{prop:equivariance}.
\end{proof}

Consequently, the hypothesis $K(g)=\{e\}$ forces $\rank M=3$ and $\ell>0$.

\section{Interchanging the factors and the second strict spectral upper bound}\label{sec:swap}

On the full-rank, positive-$\ell$ region impose the normalization \eqref{eq:rhoPnorm}.  In signed-SVD gauge $M=M^T$.  Put
\[
\Bmix=I+M^2,
\qquad
d=\sqrt{\det \Bmix}.
\]
Let $\tau:G\to G$ interchange the two factors and set $g^\vee=\tau^*g$.  In the reference basis this interchange is represented by
\[
J=\begin{pmatrix}0&I\\I&0\end{pmatrix}.
\]
We will use the polar decomposition with the positive factor on the right: for an invertible real matrix $T$, this is the factorization $T=R^TH$ with $R$ orthogonal and $H=(T^TT)^{1/2}$ positive definite.

\begin{proposition}[Factor-interchange formulas]\label{prop:swap}
Set
\[
D_0=\Bmix^{-1/2}E,
\qquad
E_0=\Bmix^{1/2}D,
\]
and take polar decompositions with the positive factors on the right
\[
D_0=R_1^TH_1,
\qquad
E_0=R_2^TH_2,
\qquad
H_1,H_2\in\Sym_3^+.
\]
In the present situation the determinants are positive, so $R_1,R_2\in SO(3)$.  The unique graph triple of $g^\vee$ given by Proposition~\ref{prop:graph} is
\begin{equation}\label{eq:swap}
P^\vee=d^{-1}Q^{1/2}\Bmix Q^{1/2},
\qquad
Q^\vee=dP^{1/2}\Bmix^{-1}P^{1/2},
\qquad
M^\vee=R_2MR_1^T.
\end{equation}
Moreover
\begin{equation}\label{eq:swapdet}
\det P^\vee=d^{-1}\det Q,
\qquad
\det Q^\vee=d\det P.
\end{equation}
\end{proposition}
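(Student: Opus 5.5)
The plan is to compute the inverse metric matrix of $g^\vee$ directly and then run the reconstruction from the proof of Proposition~\ref{prop:graph} on it. Since $\tau$ is a Lie group automorphism whose differential in the reference basis is $J$, the Gram matrix of $g^\vee_e$ is $L^\vee=JLJ$. Hence $K^\vee:=(L^\vee)^{-1}=JA^TAJ$. Writing $K=A^TA$ in blocks, and using that $D,E$ are symmetric and that $M=M^T$ in signed-SVD gauge, I expect
\[
K=\begin{pmatrix}D\Bmix D&DME\\ EMD&E^2\end{pmatrix},
\qquad
K^\vee=\begin{pmatrix}E^2&EMD\\ DME&D\Bmix D\end{pmatrix}.
\]

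Next I follow the recipe of Proposition~\ref{prop:graph} for $K^\vee$.
\begin{enumerate}[label=\textup{(\arabic*)}]
\item Set $E^\vee=(K^\vee_{22})^{1/2}=(D\Bmix D)^{1/2}=(E_0^TE_0)^{1/2}=H_2$.
\item The Schur complement is
\[
E^2-EMD(D\Bmix D)^{-1}DME=E\bigl(I-M\Bmix^{-1}M\bigr)E=E\Bmix^{-1}E .
\]
The identity $I-M(I+M^2)^{-1}M=(I+M^2)^{-1}$ holds because $M$ and $\Bmix$ commute. Thus $D^\vee=(D_0^TD_0)^{1/2}=H_1$.
\item Lemma~\ref{lem:cof-inverse} gives $P^\vee=\det(D^\vee)\,(D^\vee)^{-2}$. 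Here $\det H_1=\det E/d=\det Q/d$ and $(D^\vee)^{-2}=E^{-1}\Bmix E^{-1}$. Since $E^2=\cof Q=(\det Q)Q^{-1}$, we have $E^{-1}=(\det Q)^{-1/2}Q^{1/2}$, which yields $P^\vee=d^{-1}Q^{1/2}\Bmix Q^{1/2}$.
\item In the same way, $Q^\vee=\det(H_2)\,D^{-1}\Bmix^{-1}D^{-1}$ with $\det H_2=d\det P$ and $D^{-1}=(\det P)^{-1/2}P^{1/2}$. This gives $Q^\vee=dP^{1/2}\Bmix^{-1}P^{1/2}$.
\item The determinant identities \eqref{eq:swapdet} follow by taking determinants of these two formulas.
\item Positivity of $\det D_0$ and $\det E_0$ shows that $R_1,R_2\in SO(3)$.
\end{enumerate}

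For the mixing matrix, the reconstruction gives $M^\vee=W^\vee(D^\vee)^{-1}$ with $W^\vee=(E^\vee)^{-1}K^\vee_{21}$, so
\[
M^\vee=H_2^{-1}\,DME\,H_1^{-1}.
\]
I use the polar factorizations in whichever transposed form cancels the positive factors. From $D_0=R_1^TH_1$ we get $H_1^{-1}=D_0^{-1}R_1^T=E^{-1}\Bmix^{1/2}R_1^T$. For $H_2$ I use the symmetric form $H_2=H_2^T=E_0^TR_2$, which gives $H_2^{-1}=R_2^{T}E_0^{-T}=R_2^T\Bmix^{-1/2}D^{-1}$ when $E_0=\Bmix^{1/2}D$ is taken literally. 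The product then collapses to
\[
R_2^{\pm}\,\Bmix^{-1/2}M\Bmix^{1/2}\,R_1^T=R_2^{\pm}MR_1^T ,
\]
because $M$ commutes with $\Bmix$.

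The main obstacle is pure bookkeeping. I expect it to be here: tracking which block is $K_{21}$ versus $K_{12}$, and the transpose conventions in $A$ and in the polar decomposition. These decide whether the left orthogonal factor appears as $R_2$ or $R_2^T$. I would fix conventions by checking the frame $(\mathbf E,\mathbf F)A^T$ of Proposition~\ref{prop:graph} explicitly, and adjust the ordering in $E_0$ (i.e.\ $D\Bmix^{1/2}$ versus $\Bmix^{1/2}D$, which agree after transposition) so that the stated form $M^\vee=R_2MR_1^T$ comes out.

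Uniqueness of the graph triple of $g^\vee$ is then immediate from Proposition~\ref{prop:graph}.
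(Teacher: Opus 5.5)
Your route differs from the paper's and is sound in its main steps. You compute $K^\vee=JKJ$ and rerun the Schur-complement reconstruction from the proof of Proposition~\ref{prop:graph}. The paper instead writes down an explicit orthogonal matrix $O$ with $OAJ=\begin{pmatrix}D_0&0\\ MD_0&E_0\end{pmatrix}$ and then left-multiplies by $\diag(R_1,R_2)$. In that way $H_1$, $H_2$ and $M^\vee=R_2MR_1^T$ appear directly as blocks, with nothing inverted. Your identifications $E^\vee=H_2$ and $D^\vee=H_1$ are correct, as are the formulas for $P^\vee$ and $Q^\vee$ and the determinant identities. Your approach does not require guessing $O$. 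The paper's frame computation has the advantage that the orthogonal factors land in the right place automatically, and that is exactly where your bookkeeping went wrong.

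The one real defect is the $M^\vee$ step, which you left as $R_2^{\pm}$. This is a transposition slip, not a convention ambiguity. From $E_0=R_2^TH_2$ you get $H_2=R_2E_0$, hence $H_2=H_2^T=E_0^TR_2^T$ (not $E_0^TR_2$). So $H_2^{-1}=R_2E_0^{-T}=R_2\Bmix^{-1/2}D^{-1}$, and
\[
M^\vee=H_2^{-1}(DME)H_1^{-1}=R_2\Bmix^{-1/2}D^{-1}\,DME\,E^{-1}\Bmix^{1/2}R_1^T=R_2\Bmix^{-1/2}M\Bmix^{1/2}R_1^T=R_2MR_1^T .
\]
As you wrote it, the computation yields $R_2^TMR_1^T$, which is wrong in general.

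Your fallback of reordering $E_0$ is also not legitimate. The statement fixes $E_0=\Bmix^{1/2}D$. Replacing it by $D\Bmix^{1/2}$ changes its positive polar factor to $(\Bmix^{1/2}D^2\Bmix^{1/2})^{1/2}$. This in general differs from $E^\vee=(D\Bmix D)^{1/2}$, since $\Bmix^{1/2}D$ need not be normal, so the change would break your step (1). With the corrected identity for $H_2$, no adjustment is needed and your argument is complete.
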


\begin{proof}
Write
\[
A=\begin{pmatrix}D&0\\MD&E\end{pmatrix},
\qquad
D=(\cof P)^{1/2},\quad E=(\cof Q)^{1/2}.
\]
Define
\[
O=
\begin{pmatrix}
-\Bmix^{-1/2}M&\Bmix^{-1/2}\\
\Bmix^{-1/2}&\Bmix^{-1/2}M
\end{pmatrix}.
\]
Since $M$ commutes with $\Bmix=I+M^2$, direct block multiplication gives $OO^T=I$.  Hence left multiplication by $O$ merely changes the orthonormal frame for the swapped metric, and
\[
OAJ=
\begin{pmatrix}
D_0&0\\
MD_0&E_0
\end{pmatrix}.
\]
The blocks $D_0,E_0$ need not be symmetric, so one must perform the polar decompositions above.  Since
\[
H_1^2=D_0^TD_0=E\Bmix^{-1}E,
\qquad
H_2^2=E_0^TE_0=D\Bmix D,
\]
a further left multiplication by $\diag(R_1,R_2)$ produces the graph form of Proposition~\ref{prop:graph} and gives
\[
M^\vee=R_2MR_1^T,
\qquad
\cof P^\vee=E\Bmix^{-1}E,
\qquad
\cof Q^\vee=D\Bmix D.
\]
Now
\[
E=\sqrt{\det Q}\,Q^{-1/2},
\qquad
D=\sqrt{\det P}\,P^{-1/2}.
\]
Applying Lemma~\ref{lem:cof-inverse} gives \eqref{eq:swap}; determinants give \eqref{eq:swapdet}.
\end{proof}

Since $g$ and $g^\vee$ are isometric, $\kappa$ is unchanged.  Under $\rho_P=1$ we have $2\kappa=\det P$, and therefore
\[
\rho_{Q^\vee}=\frac{2\kappa}{\det Q^\vee}=\frac1d.
\]
Thus
\begin{equation}\label{eq:swappedQhat}
\rho_{Q^\vee}Q^\vee=P^{1/2}\Bmix^{-1}P^{1/2}.
\end{equation}
The matrices $P^{1/2}\Bmix^{-1}P^{1/2}$ and $\Bmix^{-1/2}P\Bmix^{-1/2}$ have the same positive eigenvalues.

\begin{proposition}[Second strict spectral upper bound]\label{prop:second-bound}
Let $g$ be a left-invariant Einstein metric whose graph triple $(P,Q,M)$ has $\rank M=3$.  If $K(g)=\{e\}$, then after normalizing so that $\rho_P=1$,
\begin{equation}\label{eq:second-bound}
\Bmix^{-1/2}P\Bmix^{-1/2}\prec4I.
\end{equation}
\end{proposition}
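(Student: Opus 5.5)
The plan is to apply the first strict spectral bound, Corollary~\ref{cor:first-bound}, to the swapped metric $g^\vee=\tau^*g$ and then read the result back through the factor-interchange formulas of Proposition~\ref{prop:swap}.

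First we check that $g^\vee$ satisfies the hypotheses of Corollary~\ref{cor:first-bound}. The metric $g^\vee$ is Einstein, being isometric to $g$ via $\tau$. It also has $\rank M^\vee=3$, since $M^\vee=R_2MR_1^T$ with $R_1,R_2\in SO(3)$.

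The remaining hypothesis is $\ell^\vee>0$. The interchange $\tau$ normalizes $\Inn(G)$: conjugation by $\tau$ sends an inner automorphism to an inner automorphism. Hence $K(g^\vee)=\tau^{-1}K(g)\tau=\{e\}$. Now suppose $\ell^\vee=0$. Lemma~\ref{lem:full-ell0}, applied to $g^\vee$ with $M^\vee$ invertible, would give $K(g^\vee)\neq\{e\}$, a contradiction. Therefore $\ell^\vee>0$.

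Corollary~\ref{cor:first-bound} now applies to the graph triple $(P^\vee,Q^\vee,M^\vee)$. It gives
\[
\rho_{Q^\vee}Q^\vee\prec4I.
\]
This step uses only the Einstein equations \eqref{eq:EL} for $g^\vee$, which hold in its own graph coordinates. No gauge choice is needed, because Corollary~\ref{cor:first-bound} is $SO(3)\times SO(3)$-invariant.

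To translate back to $g$, we use that $\kappa$ is an isometry invariant, so $\kappa$ is the same for $g$ and $g^\vee$. With the normalization $\rho_P=1$, the computation \eqref{eq:swappedQhat} preceding the proposition gives
\[
\rho_{Q^\vee}Q^\vee=P^{1/2}\Bmix^{-1}P^{1/2}.
\]
This matrix is similar to $\Bmix^{-1/2}P\Bmix^{-1/2}$, because $XY$ and $YX$ have the same spectrum with $X=P^{1/2}\Bmix^{-1/2}$. Both matrices are symmetric positive definite, so the bound $\prec4I$ transfers from one to the other. This yields \eqref{eq:second-bound}.

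The main obstacle is justifying the use of the signed-SVD gauge and the formulas of Proposition~\ref{prop:swap}. That proposition requires $M$ symmetric, with the positive-determinant condition giving $R_1,R_2\in SO(3)$. We first put $g$ in signed-SVD gauge by an inner automorphism. This conjugates $K(g)$, so it preserves $K(g)=\{e\}$, and it leaves $\ell$, $\rho_P$ and the spectral statement invariant. So we may assume $M$ is diagonal from the start.

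The positive-determinant claim is then checked directly. We have $\det D_0=\det E/d>0$ and $\det E_0=d\det D>0$, since $D$ and $E$ are positive definite. Hence $R_1,R_2\in SO(3)$, and the formula $\det Q^\vee=d\det P$ from \eqref{eq:swapdet} is what yields $\rho_{Q^\vee}=1/d$.
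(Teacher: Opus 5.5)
Your proposal is correct and follows the paper's proof. You pass to the swapped metric $g^\vee$, use $K(g^\vee)=\{e\}$ to rule out $\ell^\vee=0$ (citing Lemma~\ref{lem:full-ell0} directly, where the paper re-runs its argument), apply Corollary~\ref{cor:first-bound}, and transfer the bound through \eqref{eq:swappedQhat} and the similarity of $P^{1/2}\Bmix^{-1}P^{1/2}$ and $\Bmix^{-1/2}P\Bmix^{-1/2}$. Your extra checks on the signed-SVD gauge and on $\det D_0,\det E_0>0$ just make explicit what the paper uses implicitly in Proposition~\ref{prop:swap}.
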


\begin{proof}
The factor interchange conjugates inner automorphisms by $(U,V)\mapsto(V,U)$, hence $K(g)=\{e\}$ if and only if $K(g^\vee)=\{e\}$.  Let
\[
\ell^\vee=\tr\bigl((M^\vee)^TM^\vee\Lop(P^\vee)\bigr).
\]
If $\ell^\vee=0$, the invertibility of $M^\vee$ implies $\Lop(P^\vee)=0$, so $P^\vee$ is scalar.  After putting $M^\vee$ in signed-SVD gauge, the same off-diagonal argument as in Lemma~\ref{lem:full-ell0} makes $Q^\vee$ diagonal.  A coordinate $\pi$-rotation is then a nontrivial inner symmetry, contradicting $K(g^\vee)=\{e\}$.  Hence $\ell^\vee>0$.  Applying the first strict spectral upper bound \eqref{eq:first-bound} to the swapped triple and using \eqref{eq:swappedQhat} gives
\[
P^{1/2}\Bmix^{-1}P^{1/2}\prec4I,
\]
which is equivalent to \eqref{eq:second-bound}.
\end{proof}

\section{Off-diagonal Einstein equations}\label{sec:offdiag}

In the full-rank signed-SVD gauge write
\[
M=\diag(m_1,m_2,m_3),
\qquad
m_i\neq0,
\qquad
a_i=m_i^2>0,
\qquad
b_i=1+a_i.
\]
For every cyclic permutation $(i,j,k)$ of $(1,2,3)$, set
\[
S_i=a_j+a_k>0,
\qquad
\Delta_i=S_i+4.
\]
We use the following \emph{edge-coordinate} convention for the off-diagonal entries:
\[
P=\begin{pmatrix}
p_1&x_3&x_2\\
x_3&p_2&x_1\\
x_2&x_1&p_3
\end{pmatrix},
\qquad
Q=\begin{pmatrix}
q_1&y_3&y_2\\
y_3&q_2&y_1\\
y_2&y_1&q_3
\end{pmatrix}.
\]
Thus, for cyclic $(i,j,k)$,
\[
x_i=P_{jk}=P_{kj},
\qquad
y_i=Q_{jk}=Q_{kj}.
\]
The purpose of this section is to extract from the scalar-curvature formula the six equations obtained by varying these six symmetric off-diagonal coordinates.

\begin{proposition}[Off-diagonal part of scalar curvature]\label{prop:Soff}
The part of scalar curvature that depends on the variables $x_i,y_i$ is
\begin{equation}\label{eq:Soff}
S_{\rm off}
=-\frac12\sum_{i=1}^3
\left[b_i\Delta_i x_i^2-2m_iS_ix_iy_i+\Delta_i y_i^2\right].
\end{equation}
In particular, no monomial involving two distinct edge indices occurs.
\end{proposition}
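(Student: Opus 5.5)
The plan is a direct expansion of the scalar-curvature formula \eqref{eq:scalar} of Theorem~\ref{thm:scalar}. We work in full-rank signed-SVD gauge, $M=\diag(m_1,m_2,m_3)$, and write $P,Q$ in the edge coordinates above. We treat the four summands $\Phi(P)$, $\Phi(Q)$, $-\tfrac12\tr(M^TM\Lop(P))$ and $-\tfrac12\norm{CP-QM}_{\Frob}^2$ separately. For each one we isolate the monomials containing some $x_i$ or $y_i$. The key structural point is that $N=M^TM=\diag(a_1,a_2,a_3)$ and $C=\cof M=\diag(m_2m_3,m_3m_1,m_1m_2)$ are diagonal. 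As a result, every off-diagonal variable enters only through squares of single entries, or through products of $x_i$ with $y_i$ carrying the same edge index.

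\emph{Step 1 (the $\Phi$ terms).} Here $(\tr P)^2$ involves only diagonal entries, and $\tr(P^2)=\sum p_i^2+2\sum x_i^2$. Hence $\Phi(P)$ contributes $-2\sum_i x_i^2$, and likewise $\Phi(Q)$ contributes $-2\sum_i y_i^2$.

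\emph{Step 2 (the $\Lop$ term).} Since $N$ is diagonal, only the diagonal entries of $\Lop(P)$ are needed. Using $(P^2)_{ii}=p_i^2+x_j^2+x_k^2$ and $(\cof P)_{ii}=p_jp_k-x_i^2$ for cyclic $(i,j,k)$, one gets
\[
\Lop(P)_{ii}=(p_j-p_k)^2+4x_i^2+x_j^2+x_k^2 .
\]
Therefore $-\tfrac12\sum_i a_i\Lop(P)_{ii}$ contributes $-\tfrac12\sum_i(4a_i+S_i)x_i^2$. There are no cross terms, since each $\Lop(P)_{ii}$ is a sum of squares of single edges.

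\emph{Step 3 (the $R$ term).} We compute the entries of $R=CP-QM$. Its diagonal entries $c_ip_i-q_im_i$ are free of $x,y$. For cyclic $(i,j,k)$, the two entries on edge $i$ are
\[
R_{jk}=m_k(m_ix_i-y_i),\qquad R_{kj}=m_j(m_ix_i-y_i).
\]
Thus $-\tfrac12\norm{R}_{\Frob}^2$ contributes
\[
-\tfrac12\sum_i S_i\,(a_ix_i^2-2m_ix_iy_i+y_i^2).
\]

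\emph{Step 4 (collecting).} Adding the three contributions, the coefficient of $x_i^2$ is
\[
-\tfrac12\bigl(4+4a_i+S_i+a_iS_i\bigr)=-\tfrac12(1+a_i)(S_i+4)=-\tfrac12 b_i\Delta_i .
\]
The coefficient of $y_i^2$ is $-\tfrac12(4+S_i)=-\tfrac12\Delta_i$, and the coefficient of $x_iy_i$ is $m_iS_i$. These are exactly the coefficients in \eqref{eq:Soff}. The absence of mixed-edge monomials was already visible in each of Steps 1--3.

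The only delicate point is the bookkeeping in Step 3. We must correctly identify which diagonal entry of $C$ and of $M$ multiplies each off-diagonal entry of $P$ and $Q$, and this uses the edge convention $x_i=P_{jk}$. It also matters that diagonal $M$ makes $QM$ scale columns while $CP$ scales rows, which is why the two entries on edge $i$ share the same factor $(m_ix_i-y_i)$. I would verify the sign of the cross term on a single edge, for example $i=1$, and then invoke cyclic symmetry.
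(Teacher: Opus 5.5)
Your proposal is correct and follows the paper's proof essentially line for line. You use the same separate expansion of $\Phi(P)$ and $\Phi(Q)$, the same diagonal entries $\Lop(P)_{ii}=(p_j-p_k)^2+4x_i^2+x_j^2+x_k^2$, and the same edge entries $R_{jk}=m_k(m_ix_i-y_i)$, $R_{kj}=m_j(m_ix_i-y_i)$. You then collect terms with the same factorization $4+4a_i+S_i+a_iS_i=b_i\Delta_i$.
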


\begin{proof}
We expand separately the three terms in the scalar-curvature formula
\[
S=\Phi(P)+\Phi(Q)
-\frac12\tr\bigl(M^TM\Lop(P)\bigr)
-\frac12\norm{(\cof M)P-QM}_{\Frob}^2.
\]

\smallskip
\noindent\emph{1. The $\Phi(P)$ and $\Phi(Q)$ terms.}
Since
\[
\tr(P)=p_1+p_2+p_3
\]
is independent of $x_1,x_2,x_3$, whereas
\[
\tr(P^2)=p_1^2+p_2^2+p_3^2+2(x_1^2+x_2^2+x_3^2),
\]
the $x$-dependent part of
$\Phi(P)=\tfrac12(\tr P)^2-\tr(P^2)$ is
\[
-2\sum_{i=1}^3x_i^2.
\]
The same calculation for $Q$ gives the $y$-dependent term
\[
-2\sum_{i=1}^3y_i^2.
\]

\smallskip
\noindent\emph{2. The term containing $\Lop(P)$.}
For cyclic $(i,j,k)$, direct substitution into
\[
\Lop(P)=\tr(P^2)I-P^2-2\cof P
\]
gives
\begin{equation}\label{eq:Ldiag-expanded}
\Lop(P)_{ii}
=(p_j-p_k)^2+4x_i^2+x_j^2+x_k^2.
\end{equation}
For example,
\begin{align*}
\Lop(P)_{11}
&=\tr(P^2)-(P^2)_{11}-2(\cof P)_{11}\\
&=\bigl(p_1^2+p_2^2+p_3^2+2x_1^2+2x_2^2+2x_3^2\bigr)
  -\bigl(p_1^2+x_2^2+x_3^2\bigr)\\
&\qquad -2(p_2p_3-x_1^2)\\
&=(p_2-p_3)^2+4x_1^2+x_2^2+x_3^2,
\end{align*}
and the other two identities follow by cyclic permutation.

Because
\[
M^TM=\diag(a_1,a_2,a_3),
\]
only the diagonal entries of $\Lop(P)$ enter the trace.  From
\eqref{eq:Ldiag-expanded}, the coefficient of $x_i^2$ in
$\tr(M^TM\Lop(P))$ is
\[
4a_i+a_j+a_k=4a_i+S_i.
\]
Consequently, the complete off-diagonal part of this term is
\begin{equation}\label{eq:Lterm-off}
-\frac12\sum_{i=1}^3(4a_i+S_i)x_i^2.
\end{equation}

\smallskip
\noindent\emph{3. The squared norm of $R$.}
Put
\[
C=\cof M=\diag(m_2m_3,m_3m_1,m_1m_2),
\qquad
R=CP-QM.
\]
For cyclic $(i,j,k)$,
\begin{align*}
R_{jk}
&=C_{jj}P_{jk}-Q_{jk}M_{kk}\\
&=(m_km_i)x_i-y_im_k
 =m_k(m_ix_i-y_i),\\
R_{kj}
&=C_{kk}P_{kj}-Q_{kj}M_{jj}\\
&=(m_im_j)x_i-y_im_j
 =m_j(m_ix_i-y_i).
\end{align*}
Thus the two entries indexed by the unordered pair $\{j,k\}$ satisfy
\[
R_{jk}^2+R_{kj}^2
=(m_j^2+m_k^2)(m_ix_i-y_i)^2
=S_i(m_ix_i-y_i)^2.
\]
The diagonal entries of $R$ are independent of all $x_r,y_r$, and the three unordered pairs $\{2,3\}$, $\{3,1\}$, $\{1,2\}$ are disjoint.  Hence the complete off-diagonal part of this squared norm is
\begin{equation}\label{eq:Rterm-off}
-\frac12\sum_{i=1}^3S_i(m_ix_i-y_i)^2.
\end{equation}

Combining the preceding formulas, the summand associated with a fixed index $i$ is
\begin{align*}
&-2x_i^2-2y_i^2
-\frac12(4a_i+S_i)x_i^2
-\frac12S_i(m_ix_i-y_i)^2\\
&\quad=-\frac12\Bigl[
\bigl(4+4a_i+S_i+a_iS_i\bigr)x_i^2
-2m_iS_ix_iy_i
+(4+S_i)y_i^2
\Bigr].
\end{align*}
Finally,
\[
4+4a_i+S_i+a_iS_i
=(1+a_i)(4+S_i)=b_i\Delta_i,
\qquad
4+S_i=\Delta_i,
\]
which proves \eqref{eq:Soff}.
\end{proof}

We next convert the derivatives of \eqref{eq:Soff} into the Einstein equations.  Because $x_i$ represents the two equal entries $P_{jk}=P_{kj}$, a variation of $x_i$ is the symmetric matrix
\[
H_i=E_{jk}+E_{kj}.
\]
Therefore
\[
\frac{\partial S}{\partial x_i}
=DS_P[H_i]
=2(\nabla_PS)_{jk}.
\]
Using $\nabla_PS=\kappa P^{-1}$ and
$P^{-1}=(\det P)^{-1}(\cof P)^T$, with $P$ and $\cof P$ symmetric, we obtain
\begin{equation}\label{eq:x-derivative-einstein}
\frac{\partial S}{\partial x_i}
=\frac{2\kappa}{\det P}(\cof P)_{jk}
=\rho_P(\cof P)_{jk}.
\end{equation}
The identical argument for $y_i$ gives
\begin{equation}\label{eq:y-derivative-einstein}
\frac{\partial S}{\partial y_i}
=\rho_Q(\cof Q)_{jk}.
\end{equation}

\begin{corollary}[Six off-diagonal Einstein equations]\label{cor:offeq}
Let $(P,Q,M)$ be an Einstein graph triple in the full-rank signed-SVD gauge fixed above.  For every cyclic $(i,j,k)$,
\begin{align}
-b_i\Delta_i x_i+m_iS_i y_i
&=\rho_P(\cof P)_{jk},\label{eq:offP-general}\\
m_iS_i x_i-\Delta_i y_i
&=\rho_Q(\cof Q)_{jk}.\label{eq:offQ-general}
\end{align}
Under the normalization $\rho_P=1$, put
\[
\zeta=\rho_Q>0.
\]
Then
\begin{align}
-b_i\Delta_i x_i+m_iS_i y_i
&=(\cof P)_{jk},\label{eq:offP}\\
m_iS_i x_i-\Delta_i y_i
&=\zeta(\cof Q)_{jk}.\label{eq:offQ}
\end{align}
Moreover,
\begin{equation}\label{eq:cofedge}
(\cof P)_{jk}=x_jx_k-p_ix_i,
\qquad
(\cof Q)_{jk}=y_jy_k-q_iy_i.
\end{equation}
\end{corollary}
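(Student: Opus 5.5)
The statement to prove is Corollary~\ref{cor:offeq}. The plan is to differentiate the explicit off-diagonal part \eqref{eq:Soff} of the scalar curvature and combine it with the relations \eqref{eq:x-derivative-einstein} and \eqref{eq:y-derivative-einstein}, which are already derived from the Lagrange system \eqref{eq:EL}.

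The key point is that, by Proposition~\ref{prop:Soff}, all dependence of $S$ on the edge variables $x_i,y_i$ sits in $S_{\rm off}$. Once $M$ is fixed in signed-SVD gauge and the diagonal entries of $P,Q$ are held fixed, $\partial S/\partial x_i=\partial S_{\rm off}/\partial x_i$. Moreover, no monomial couples different edge indices. Differentiating the $i$-th bracket therefore gives
\[
\frac{\partial S}{\partial x_i}=-b_i\Delta_i x_i+m_iS_iy_i,\qquad
\frac{\partial S}{\partial y_i}=m_iS_ix_i-\Delta_i y_i .
\]
Equating these with $\rho_P(\cof P)_{jk}$ and $\rho_Q(\cof Q)_{jk}$ from \eqref{eq:x-derivative-einstein}--\eqref{eq:y-derivative-einstein} yields \eqref{eq:offP-general} and \eqref{eq:offQ-general}.

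The only point needing care is the factor bookkeeping in \eqref{eq:x-derivative-einstein}, which I will recheck. A variation of $x_i$ is $H_i=E_{jk}+E_{kj}$, so $\partial_{x_i}S=\ip{\nabla_PS}{H_i}=2(\nabla_PS)_{jk}=2\kappa(P^{-1})_{jk}$. Since $P^{-1}=(\det P)^{-1}\cof P$ for symmetric $P$, this equals $\rho_P(\cof P)_{jk}$ with $\rho_P=2\kappa/\det P$ as in \eqref{eq:rho}; the same holds for $Q$. Specializing $\rho_P=1$ via \eqref{eq:rhoPnorm}, which is legitimate since $\kappa>0$, and writing $\zeta=\rho_Q$, which is positive because $\kappa>0$ and $\det Q>0$, gives \eqref{eq:offP}--\eqref{eq:offQ}.

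Finally, \eqref{eq:cofedge} is a direct signed-minor computation with the edge convention. Take $(i,j,k)=(1,2,3)$. Then $(\cof P)_{23}=(-1)^{5}\det\begin{pmatrix}p_1&x_3\\x_2&x_1\end{pmatrix}=x_2x_3-p_1x_1$, and the other entries follow by cyclic symmetry; the same computation applies to $Q$. There is no real obstacle here. The only step that must be handled explicitly is the factor of $2$ from the symmetric variation, which is exactly cancelled by the $2$ in the definition of $\rho_P$.
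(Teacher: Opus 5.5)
Your proposal is correct and follows the paper's proof essentially line for line: differentiate \eqref{eq:Soff} in $x_i$ and $y_i$, equate with \eqref{eq:x-derivative-einstein}--\eqref{eq:y-derivative-einstein}, specialize to $\rho_P=1$, $\zeta=\rho_Q>0$, and obtain \eqref{eq:cofedge} from the signed $(2,3)$ minor plus cyclic symmetry. Your explicit check that the factor $2$ from the symmetric variation $H_i=E_{jk}+E_{kj}$ cancels the $2$ in $\rho_P=2\kappa/\det P$ is exactly the bookkeeping the paper does just before the corollary.
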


\begin{proof}
Differentiating \eqref{eq:Soff} gives
\[
\frac{\partial S}{\partial x_i}
=-b_i\Delta_i x_i+m_iS_i y_i,
\qquad
\frac{\partial S}{\partial y_i}
=m_iS_i x_i-\Delta_i y_i.
\]
Equations \eqref{eq:x-derivative-einstein} and
\eqref{eq:y-derivative-einstein} now give
\eqref{eq:offP-general}--\eqref{eq:offQ-general}.

For completeness, take $(i,j,k)=(1,2,3)$.  The $(2,3)$ cofactor of $P$ is
\[
(\cof P)_{23}
=-\det\begin{pmatrix}p_1&x_3\\x_2&x_1\end{pmatrix}
=x_2x_3-p_1x_1.
\]
Cyclic permutation gives the first identity in \eqref{eq:cofedge}; the second is identical.
\end{proof}

Since $M$ is invertible, $m_i\neq0$, and since $S_i=a_j+a_k>0$, the equations above give the following consequences for the vanishing pattern of the off-diagonal entries:
\begin{align}
x_i=0
&\Longrightarrow
m_iS_i y_i=x_jx_k,\label{eq:supportP}\\
y_i=0
&\Longrightarrow
m_iS_i x_i=\zeta y_jy_k.\label{eq:supportQ}
\end{align}
These two implications will be used repeatedly below.

\section{Common coordinate axes and vanishing patterns}\label{sec:support}

We first make precise how a common coordinate axis produces an inner isometry.

\begin{lemma}[Criterion for an inner symmetry]\label{lem:stabilizer}
Suppose $(U,V)\in SO(3)\times SO(3)$ satisfies
\[
UPU^T=P,
\qquad
VQV^T=Q,
\qquad
VMU^T=M.
\]
Then the inner automorphism whose differential is
$T=\diag(U,V)$ is an isometry of the left-invariant metric determined by $(P,Q,M)$.
\end{lemma}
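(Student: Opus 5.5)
The proof will rest on Proposition~\ref{prop:equivariance}, together with the fact that an inner automorphism preserving the background inner product acts on frames by left multiplication by $T=\diag(U,V)$. We will check that under the stated hypotheses the metric matrix $L=(A^TA)^{-1}$ is fixed by the pullback action, i.e.\ $T^TLT=L$ (equivalently $TLT^T=L$, since $T$ is orthogonal). By Proposition~\ref{prop:equivariance},
\[
A(UPU^T,VQV^T,VMU^T)=TA(P,Q,M)T^T .
\]
By hypothesis the left-hand side is just $A(P,Q,M)$, so $A=TAT^T$.

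From $A=TAT^T$ we get
\[
A^TA=TA^TT^TTAT^T=TA^TAT^T .
\]
Inverting both sides gives $L=TLT^T$, which is the same as $T^TLT=L$.

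Next we translate this matrix identity into the statement about $g$. The inner automorphism $c_a\colon x\mapsto axa^{-1}$ with $\Ad(a)=T$ fixes $e$ and commutes with left translations up to conjugation:
\[
c_a\circ L_x=L_{axa^{-1}}\circ c_a .
\]
Hence $c_a^*g$ is again left-invariant. At $e$ it is given by
\[
(c_a^*g)_e(X,Y)=g_e(TX,TY)=\langle LTX,TY\rangle_0=\langle T^TLTX,Y\rangle_0 ,
\]
where we used that $T\in SO(3)\times SO(3)$ is $\langle\cdot,\cdot\rangle_0$-orthogonal, since the bases are orthonormal. Therefore $T^TLT=L$ gives $c_a^*g=g$, and $c_a$ is an isometry.

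The only point needing care is the convention issue flagged in the paper: whether the action is $T$ or $T^{-1}$. This is harmless, because the set of $T$ fixing $L$ is a group, so $T$ fixes $L$ exactly when $T^{-1}$ does. Surjectivity $\Ad(G)=SO(3)\times SO(3)$ guarantees that some $a$ with $\Ad(a)=T$ exists.
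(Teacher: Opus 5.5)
Your proof is correct and takes the same route as the paper. The paper also establishes $TAT^T=A$, deduces $TLT^T=L$, and concludes by left invariance; the only differences are that it derives $UDU^T=D$ and $VEV^T=E$ inline from uniqueness of the positive square root instead of citing Proposition~\ref{prop:equivariance}, and that your check that $c_a^*g$ is left-invariant and your remark on the $T$ versus $T^{-1}$ convention make explicit what the paper leaves implicit.
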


\begin{proof}
The relation $UPU^T=P$ implies
\[
U(\cof P)U^T=\cof P.
\]
By uniqueness of the positive square root,
\[
UDU^T=D,
\qquad
D=(\cof P)^{1/2}.
\]
Similarly,
\[
VEV^T=E,
\qquad
E=(\cof Q)^{1/2}.
\]
For
\[
A=\begin{pmatrix}D&0\\MD&E\end{pmatrix},
\qquad
T=\diag(U,V),
\]
we compute
\begin{align*}
TAT^T
&=\begin{pmatrix}
UDU^T&0\\
VMDU^T&VEV^T
\end{pmatrix}\\
&=\begin{pmatrix}
D&0\\
(VMU^T)(UDU^T)&E
\end{pmatrix}\\
&=\begin{pmatrix}D&0\\MD&E\end{pmatrix}=A.
\end{align*}
It follows that
\[
T(A^TA)T^T=A^TA.
\]
Since $L=(A^TA)^{-1}$, the same identity holds for $L$:
\[
TLT^T=L.
\]
Thus the inner automorphism with differential $T$ preserves the metric at the identity and hence, by left invariance, preserves the metric globally.
\end{proof}

For $i\in\{1,2,3\}$, let
\[
R_i=2e_ie_i^T-I.
\]
This is the rotation by $\pi$ about the axis $\mathbb Re_i$; in coordinates,
\[
R_1=\diag(1,-1,-1),
\quad
R_2=\diag(-1,1,-1),
\quad
R_3=\diag(-1,-1,1).
\]
Because $P$ is symmetric, the axis $\mathbb Re_i$ is $P$-invariant exactly when
\begin{equation}\label{eq:P-axis-condition}
x_j=x_k=0,
\end{equation}
where $(i,j,k)$ is cyclic.  In that case $R_iPR_i^T=P$.  Likewise,
\begin{equation}\label{eq:Q-axis-condition}
y_j=y_k=0
\end{equation}
means that $\mathbb Re_i$ is $Q$-invariant and $R_iQR_i^T=Q$.
Since $M$ is diagonal,
\[
R_iMR_i^T=M.
\]
Consequently, if \eqref{eq:P-axis-condition} and
\eqref{eq:Q-axis-condition} hold simultaneously, Lemma~\ref{lem:stabilizer} applied to $(U,V)=(R_i,R_i)$ produces a nonidentity element of $K(g)$.

Under the contradiction hypothesis $K(g)=\{e\}$, no coordinate axis can therefore be invariant for both $P$ and $Q$.

\begin{definition}[Edge support, support type, and missing index]\label{def:support-type}
In signed-SVD coordinates, the \emph{edge support} of $P$ is
\[
I_P=\{i\in\{1,2,3\}:x_i\neq0\},
\]
and the edge support of $Q$ is
\[
I_Q=\{i\in\{1,2,3\}:y_i\neq0\}.
\]
The ordered pair $(|I_P|,|I_Q|)$ is called the \emph{support type}.  If, for example, $|I_P|=2$, the unique index in $\{1,2,3\}\setminus I_P$ is called the \emph{missing $P$-edge index}; the analogous terminology is used for $Q$.  These are coordinate notions tied to the signed-SVD gauge and are not claimed to be invariants of an arbitrary matrix presentation.
\end{definition}

\begin{lemma}[Exhaustion of support types]\label{lem:support-class}
Let $(P,Q,M)$ be the graph triple of a left-invariant Einstein metric,
in signed-SVD gauge with $M$ invertible and normalized by $\rho_P=1$.
Suppose that $P,Q$ have no common invariant coordinate axis. Then
\[
(|I_P|,|I_Q|)\in\{(3,3),(2,3),(3,2),(2,2)\}.
\]
In the $(2,2)$ case, the missing index of $I_P$ is different from the missing index of $I_Q$.
\end{lemma}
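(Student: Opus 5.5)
The plan is to rule out every support type outside the four listed ones by combining the no-common-axis hypothesis with the implications \eqref{eq:supportP} and \eqref{eq:supportQ}. First record what the hypothesis means combinatorially. By \eqref{eq:P-axis-condition}, the axis $\mathbb Re_i$ is $P$-invariant iff $x_j=x_k=0$, i.e.\ iff $I_P\subseteq\{i\}$; similarly for $Q$. So ``no common invariant axis'' says: there is no $i$ with both $I_P\subseteq\{i\}$ and $I_Q\subseteq\{i\}$. In particular, if $|I_P|\le1$ then $|I_Q|\ge1$ and $I_Q\not\subseteq I_P\cup\{\text{anything forced}\}$, and symmetrically.

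Second, use the support implications. Suppose $|I_P|\le1$, so at least two indices, say $j,k$, have $x_j=x_k=0$. Applying \eqref{eq:supportP} to index $j$ gives $m_jS_jy_j=x_kx_i=0$ (since $x_k=0$), and since $m_j\neq0$, $S_j>0$ we get $y_j=0$; likewise $y_k=0$. Hence $I_Q\subseteq\{i\}$ while $I_P\subseteq\{i\}$, so $\mathbb Re_i$ is a common invariant axis, a contradiction. The identical argument with \eqref{eq:supportQ} (using $\zeta>0$ only as a harmless factor) shows $|I_Q|\le1$ is impossible. Thus $|I_P|,|I_Q|\ge2$, which leaves exactly the four types $(3,3),(2,3),(3,2),(2,2)$.

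Third, the $(2,2)$ refinement. Suppose both missing indices equal the same $i$, so $x_i=y_i=0$ and $x_j,x_k,y_j,y_k\neq0$. Then \eqref{eq:supportP} at index $i$ gives $0=m_iS_iy_i=x_jx_k\neq0$, a contradiction. (Equivalently \eqref{eq:supportQ} gives $0=\zeta y_jy_k$.) So the missing indices differ.

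I do not expect a genuine obstacle. The only point needing care is the bookkeeping in the second step: the edge coordinate $x_i$ vanishing sets the product of the \emph{other two} edges to $m_iS_iy_i$, and one must check that two vanishing $P$-edges really kill both corresponding $Q$-edges. This works because each product $x_jx_k$ with one of $j,k$ among the vanishing indices is zero.
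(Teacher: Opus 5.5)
Your proof is correct and follows essentially the paper's argument. Two vanishing edges of one matrix force, via \eqref{eq:supportP}--\eqref{eq:supportQ}, the corresponding edges of the other matrix to vanish, which yields a common invariant axis; and \eqref{eq:supportP} at the missing index gives $y_i\neq0$, which proves the $(2,2)$ refinement. Where you differ is in ruling out $|I_Q|\le1$ by mirroring the $|I_P|\le1$ argument, whereas the paper checks each value of $|I_P|$ and contradicts $x_j\neq0$ directly; your version is a tidy streamlining rather than a different route, and the loose remark at the end of your first paragraph is never used and can be dropped.
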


\begin{proof}
We examine the four possible values of $|I_P|$.

\smallskip
\noindent\emph{Case $|I_P|=0$.}
Then $x_1=x_2=x_3=0$.  Equation \eqref{eq:supportP} gives
\[
m_iS_i y_i=0
\]
for every $i$.  Because $m_iS_i\neq0$, all $y_i$ vanish.  Hence every coordinate axis is invariant for both $P$ and $Q$, contrary to the hypothesis.

\smallskip
\noindent\emph{Case $|I_P|=1$.}
Suppose $I_P=\{i\}$.  Then $x_i\neq0$ and
$x_j=x_k=0$.  Applying \eqref{eq:supportP} at the two missing indices gives
\[
m_jS_jy_j=x_kx_i=0,
\qquad
m_kS_ky_k=x_ix_j=0.
\]
Thus $y_j=y_k=0$.  Equations
\eqref{eq:P-axis-condition} and \eqref{eq:Q-axis-condition} show that $\mathbb Re_i$ is a common invariant axis, again a contradiction.  Notice also that \eqref{eq:supportQ} at index $i$ forces $y_i\neq0$, so in fact $I_Q=\{i\}$.

\smallskip
\noindent\emph{Case $|I_P|=2$.}
Let $i$ be the unique missing index, so
\[
x_i=0,
\qquad
x_jx_k\neq0.
\]
Equation \eqref{eq:supportP} at index $i$ gives
\begin{equation}\label{eq:missing-x-forces-y}
y_i=\frac{x_jx_k}{m_iS_i}\neq0.
\end{equation}
Therefore $I_Q$ is nonempty.  It cannot have cardinality one: if
$I_Q=\{i\}$, then $y_j=y_k=0$, and \eqref{eq:supportQ} at index $j$ gives
\[
m_jS_jx_j=\zeta y_ky_i=0,
\]
contrary to $x_j\neq0$.  Hence $|I_Q|\in\{2,3\}$.
If $|I_Q|=2$, its missing index cannot be $i$, because
\eqref{eq:missing-x-forces-y} shows that $y_i\neq0$.
Thus this case yields either $(2,3)$ or $(2,2)$ with different missing indices.

\smallskip
\noindent\emph{Case $|I_P|=3$.}
All $x_i$ are nonzero.  If $I_Q=\varnothing$, then
\eqref{eq:supportQ} gives $m_iS_ix_i=0$ for every $i$, impossible.
If $|I_Q|=1$, say $I_Q=\{i\}$, then $y_j=y_k=0$, and
\eqref{eq:supportQ} at index $j$ again gives
\[
m_jS_jx_j=\zeta y_ky_i=0,
\]
contrary to $x_j\neq0$.  Therefore $|I_Q|\in\{2,3\}$, which gives the types $(3,2)$ and $(3,3)$.

These cases exhaust all possibilities.
\end{proof}

\begin{definition}[Fully mixed symmetric matrix]\label{def:fully-mixed-matrix}
After a signed-SVD coordinate system has been chosen, a symmetric
$3\times3$ matrix $H$ is called \emph{fully mixed} if all three of its
off-diagonal entries in that coordinate system are nonzero:
\begin{equation}\label{eq:fully-mixed-matrix}
H_{12}H_{13}H_{23}\neq0.
\end{equation}
This is deliberately a coordinate-dependent shorthand, not an invariant
property of an abstract symmetric bilinear form.  In the notation of
Section~\ref{sec:offdiag}, $P$ is fully mixed iff
$x_1x_2x_3\neq0$, and $Q$ is fully mixed iff
$y_1y_2y_3\neq0$.
\end{definition}

\section{A rank-one decomposition for fully mixed matrices}\label{sec:rankone}

The next elementary lemma converts the cofactor entries of a fully mixed
symmetric matrix, in the sense of Definition~\ref{def:fully-mixed-matrix},
into diagonal parameters.  The strict inequality $H\prec4I$ then forces at
least two of those parameters below $4$.

\begin{lemma}[Fully mixed decomposition]\label{lem:rankone}
Let $H\in\Sym_3$ have all three off-diagonal entries nonzero.  For cyclic $(i,j,k)$ write
\[
h_i=H_{jk}.
\]
Then there exist
\[
D_H=\diag(d_1,d_2,d_3),
\qquad
\varepsilon\in\{\pm1\},
\qquad
v=(v_1,v_2,v_3)^T,
\qquad
v_1v_2v_3\neq0,
\]
such that
\begin{equation}\label{eq:rankone-decomp}
H=D_H+\varepsilon vv^T.
\end{equation}
If in addition $H\prec4I$, then at least two of the three numbers $d_i$ are strictly smaller than $4$.  Finally,
\begin{equation}\label{eq:cof-rankone}
(\cof H)_{jk}=-d_i h_i
\end{equation}
for every cyclic $(i,j,k)$.
\end{lemma}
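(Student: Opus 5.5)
The plan is to construct the decomposition \eqref{eq:rankone-decomp} explicitly from the off-diagonal entries, then read off the spectral claim by rank-one interlacing and the cofactor identity by direct substitution into \eqref{eq:cofedge}.

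\emph{Construction.} We want $\varepsilon v_jv_k=h_i$ for each cyclic $(i,j,k)$. Multiplying the three relations gives $h_1h_2h_3=\varepsilon(v_1v_2v_3)^2$, which forces
\[
\varepsilon=\operatorname{sign}(h_1h_2h_3).
\]
Dividing two relations by the third gives $\varepsilon h_jh_k/h_i=v_i^2$. This number is positive, since its sign is that of $\varepsilon h_1h_2h_3/h_i^2>0$. So set $|v_i|=\sqrt{\varepsilon h_jh_k/h_i}$; these are nonzero, hence $v_1v_2v_3\neq0$. Then fix signs by taking $v_1>0$, $\operatorname{sign}v_2=\operatorname{sign}(\varepsilon h_3)$ and $\operatorname{sign}v_3=\operatorname{sign}(\varepsilon h_2)$. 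A short sign check gives $\varepsilon v_jv_k=h_i$ for all three cyclic triples; the magnitudes already match by construction. Finally put $d_i=H_{ii}-\varepsilon v_i^2$.

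\emph{Spectral claim.} Write $4I-D_H=(4I-H)+\varepsilon vv^T$, with $4I-H\succ0$.
\begin{enumerate}[label=\textup{(\alph*)}]
\item If $\varepsilon=+1$, then $4I-D_H\succ0$, so all three $d_i<4$.
\item If $\varepsilon=-1$, then $4I-D_H$ is a positive-definite matrix minus a rank-one positive semidefinite matrix. By Cauchy/Weyl interlacing, at most one eigenvalue can fail to be positive. Since $4I-D_H$ is diagonal, its eigenvalues are the $4-d_i$, so at least two $d_i<4$.
\end{enumerate}

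\emph{Cofactor identity.} As in \eqref{eq:cofedge}, $(\cof H)_{jk}=h_jh_k-H_{ii}h_i$. Now $h_jh_k=(\varepsilon v_kv_i)(\varepsilon v_iv_j)=v_i^2v_jv_k=\varepsilon v_i^2h_i$. Hence
\[
(\cof H)_{jk}=h_i(\varepsilon v_i^2-H_{ii})=-d_ih_i.
\]

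The only delicate point is the sign bookkeeping in the construction: showing $v_i^2>0$ and that the sign choices for $v_2,v_3$ make all three relations $\varepsilon v_jv_k=h_i$ hold at once. Everything else is routine.
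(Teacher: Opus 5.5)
Your proposal is correct and follows essentially the same route as the paper: the same explicit construction of $\varepsilon$, $v$ and $d_i=H_{ii}-\varepsilon v_i^2$, the same case split on $\varepsilon$, and the same direct cofactor computation. The one difference is in the $\varepsilon=-1$ case: you cite rank-one interlacing, whereas the paper uses the explicit test vector $u=v_se_r-v_re_s\perp v$ in the $(r,s)$-coordinate plane, which is exactly the concrete min--max argument behind that interlacing step.
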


\begin{proof}
Set
\[
\varepsilon=\operatorname{sgn}(H_{12}H_{13}H_{23}).
\]
Because all three factors are nonzero,
\[
\varepsilon\frac{H_{12}H_{13}}{H_{23}}>0.
\]
Choose
\[
v_1=\sqrt{\varepsilon\frac{H_{12}H_{13}}{H_{23}}}>0,
\qquad
v_2=\varepsilon\frac{H_{12}}{v_1},
\qquad
v_3=\varepsilon\frac{H_{13}}{v_1}.
\]
Then
\[
\varepsilon v_1v_2=H_{12},
\qquad
\varepsilon v_1v_3=H_{13}.
\]
Moreover,
\begin{align*}
\varepsilon v_2v_3
&=\varepsilon
\left(\varepsilon\frac{H_{12}}{v_1}\right)
\left(\varepsilon\frac{H_{13}}{v_1}\right)\\
&=\frac{\varepsilon H_{12}H_{13}}{v_1^2}
=H_{23}.
\end{align*}
Thus the off-diagonal entries of $\varepsilon vv^T$ agree with those of $H$.  Defining
\[
d_i=H_{ii}-\varepsilon v_i^2
\]
proves \eqref{eq:rankone-decomp}.

We now impose $H\prec4I$.
If $\varepsilon=1$, then for each coordinate vector $e_i$,
\[
H_{ii}=d_i+v_i^2<4,
\]
so
\[
d_i<4-v_i^2<4
\]
for all three indices.

Suppose $\varepsilon=-1$.  If two indices $r\neq s$ satisfied
$d_r,d_s\ge4$, take
\[
u=v_se_r-v_re_s.
\]
The vector $u$ is nonzero because $v_rv_s\neq0$, is supported in the $(r,s)$-coordinate plane, and satisfies $u^Tv=0$.  Since now
$H=D_H-vv^T$,
\begin{align*}
u^THu
&=u^TD_Hu-(u^Tv)^2\\
&=d_rv_s^2+d_sv_r^2\\
&\ge4(v_s^2+v_r^2)=4\norm{u}^2.
\end{align*}
This contradicts $H\prec4I$, which requires
$u^THu<4\norm{u}^2$ for every nonzero $u$.  Hence at most one $d_i$ can be at least $4$, and at least two are strictly below $4$.

Finally, for cyclic $(i,j,k)$, direct expansion of the $(j,k)$ cofactor gives
\begin{equation}\label{eq:cof-direct-rankone}
(\cof H)_{jk}=h_jh_k-H_{ii}h_i.
\end{equation}
From \eqref{eq:rankone-decomp},
\[
h_i=\varepsilon v_jv_k,
\qquad
h_j=\varepsilon v_kv_i,
\qquad
h_k=\varepsilon v_iv_j.
\]
Therefore
\[
h_jh_k
=v_i^2v_jv_k
=\varepsilon v_i^2h_i.
\]
Since $H_{ii}=d_i+\varepsilon v_i^2$, substitution into
\eqref{eq:cof-direct-rankone} yields
\[
(\cof H)_{jk}
=\varepsilon v_i^2h_i-(d_i+\varepsilon v_i^2)h_i
=-d_ih_i.
\]
\end{proof}

In a decomposition $H=D_H+\varepsilon vv^T$ as in Lemma~\ref{lem:rankone}, we call the entries $d_i$ of the diagonal matrix $D_H$ the \emph{diagonal remainder parameters}.  

\section{Exclusion of all full-rank support types}\label{sec:fullrank-exclusion}

Throughout this section, suppose for contradiction that
\[
K(g)=\{e\}.
\]
The preceding reductions place us in the full-rank, positive-$\ell$ region defined in \eqref{eq:full-rank-positive-ell-region}, and both strict spectral upper bounds hold:
\begin{equation}\label{eq:two-strict-bounds}
\widehat Q:=\zeta Q\prec4I,
\qquad
\widehat P:=\Bmix^{-1/2}P\Bmix^{-1/2}\prec4I,
\qquad
\Bmix=\diag(b_1,b_2,b_3).
\end{equation}
Two elementary consequences will be used repeatedly.  First, evaluating
$4I-\widehat Q$ and $4I-\widehat P$ on the coordinate vectors gives
\begin{equation}\label{eq:diagonal-strict-bounds}
\zeta q_i<4,
\qquad
\frac{p_i}{b_i}<4
\qquad(i=1,2,3).
\end{equation}
Second, multiplication by the positive diagonal matrices $\zeta I$ and
$\Bmix^{-1/2}$ does not change which off-diagonal entries vanish:
\[
(\widehat Q)_{jk}=\zeta y_i,
\qquad
(\widehat P)_{jk}=\frac{x_i}{\sqrt{b_jb_k}}.
\]
Thus $Q$ is fully mixed iff $\widehat Q$ is fully mixed, and $P$ is fully mixed iff $\widehat P$ is fully mixed.

We record explicitly how the off-diagonal Einstein equations interact with the decomposition of Lemma~\ref{lem:rankone}.

\begin{lemma}[Edge ratios and diagonal remainders]\label{lem:edge-ratios}
Under the standing Einstein and normalization hypotheses of this section,
let $(i,j,k)$ be cyclic.

\begin{enumerate}[label=\textup{(\roman*)}]
\item Suppose $P$ is fully mixed and write
\[
P=\diag(c_1,c_2,c_3)+\varepsilon zz^T
\]
as in Lemma~\ref{lem:rankone}.  Since $x_i\neq0$, define
\[
\Theta_i=\frac{y_i}{m_ix_i}.
\]
Then
\begin{equation}\label{eq:ci}
c_i=b_i\Delta_i-a_iS_i\Theta_i,
\end{equation}
and hence
\begin{equation}\label{eq:ci-threshold}
\frac{c_i}{b_i}-4
=\frac{S_i}{b_i}(b_i-a_i\Theta_i).
\end{equation}

\item Suppose $Q$ is fully mixed and $x_iy_i\neq0$.  Define
\[
\Theta_i=\frac{y_i}{m_ix_i}\neq0
\]
and write
\[
\widehat Q=\diag(e_1,e_2,e_3)+\eta ww^T.
\]
Then
\begin{equation}\label{eq:ei}
e_i=\Delta_i-\frac{S_i}{\Theta_i},
\end{equation}
and therefore
\begin{equation}\label{eq:ei-threshold}
e_i-4=S_i\frac{\Theta_i-1}{\Theta_i}.
\end{equation}
\end{enumerate}
\end{lemma}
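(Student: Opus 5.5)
We prove both parts by combining the off-diagonal Einstein equations of Corollary~\ref{cor:offeq} with the cofactor formula \eqref{eq:cof-rankone} of Lemma~\ref{lem:rankone}. Part~(i) uses the $P$-equation \eqref{eq:offP}, where $\rho_P=1$. Part~(ii) uses the $Q$-equation \eqref{eq:offQ}, after rescaling it to express $\widehat Q$.

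\emph{Part (i).} Apply Lemma~\ref{lem:rankone} to $H=P$, with $h_i=x_i$ and $d_i=c_i$. Then \eqref{eq:cof-rankone} gives $(\cof P)_{jk}=-c_ix_i$. Substituting this into \eqref{eq:offP} yields
\[
-b_i\Delta_ix_i+m_iS_iy_i=-c_ix_i.
\]
Since $x_i\neq0$, divide by $x_i$ and write $y_i/x_i=m_i\Theta_i$. Because $m_i^2=a_i$, this gives $c_i=b_i\Delta_i-a_iS_i\Theta_i$, which is \eqref{eq:ci}. To get \eqref{eq:ci-threshold}, use $\Delta_i=S_i+4$:
\[
\frac{c_i}{b_i}-4=\Delta_i-4-\frac{a_iS_i\Theta_i}{b_i}=\frac{S_i}{b_i}\bigl(b_i-a_i\Theta_i\bigr).
\]

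\emph{Part (ii).} Apply Lemma~\ref{lem:rankone} to $\widehat Q=\zeta Q$. Its edge entries are $h_i=\zeta y_i$, so $(\cof\widehat Q)_{jk}=-e_i\zeta y_i$. Since $\cof$ is homogeneous of degree two, $(\cof\widehat Q)_{jk}=\zeta^2(\cof Q)_{jk}$, and therefore
\[
\zeta(\cof Q)_{jk}=-e_iy_i.
\]
Substituting into \eqref{eq:offQ} gives $m_iS_ix_i-\Delta_iy_i=-e_iy_i$. Divide by $y_i\neq0$ and use $m_ix_i/y_i=1/\Theta_i$. This gives $e_i=\Delta_i-S_i/\Theta_i$, which is \eqref{eq:ei}. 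Subtracting $4=\Delta_i-S_i$ then gives \eqref{eq:ei-threshold}.

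The only step that needs care is the normalization in part~(ii). The decomposition is taken for $\widehat Q$, not for $Q$, so the factor $\zeta$ has to be tracked through the homogeneity of the cofactor. Once this is done, both parts are immediate algebra.
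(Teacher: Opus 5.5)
Your proposal is correct and follows the paper's proof essentially verbatim. In part (i) you substitute $(\cof P)_{jk}=-c_ix_i$ into \eqref{eq:offP}. In part (ii) you use $\cof(\zeta Q)=\zeta^2\cof Q$ to obtain $\zeta(\cof Q)_{jk}=-e_iy_i$ and substitute it into \eqref{eq:offQ}, and the divisions by $x_i$ and $y_i$ are justified exactly as in the paper.
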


\begin{proof}
For part (i), Lemma~\ref{lem:rankone} gives
\[
(\cof P)_{jk}=-c_ix_i.
\]
Substitution into \eqref{eq:offP} gives
\[
-b_i\Delta_i x_i+m_iS_i y_i=-c_ix_i.
\]
Divide by $x_i\neq0$ and use
\[
\frac{m_iy_i}{x_i}=m_i^2\Theta_i=a_i\Theta_i
\]
to obtain \eqref{eq:ci}.  Since $\Delta_i=S_i+4$,
\begin{align*}
\frac{c_i}{b_i}-4
&=\Delta_i-\frac{a_iS_i}{b_i}\Theta_i-4\\
&=S_i-\frac{a_iS_i}{b_i}\Theta_i
=\frac{S_i}{b_i}(b_i-a_i\Theta_i),
\end{align*}
which is \eqref{eq:ci-threshold}.

For part (ii), Lemma~\ref{lem:rankone}, applied to $\widehat Q$, gives
\[
(\cof\widehat Q)_{jk}=-e_i(\widehat Q)_{jk}=-\zeta e_i y_i.
\]
Since $\cof(\zeta Q)=\zeta^2\cof Q$, this identity is equivalent to
\[
\zeta(\cof Q)_{jk}=-e_iy_i.
\]
Substituting into \eqref{eq:offQ} yields
\[
m_iS_ix_i-\Delta_i y_i=-e_iy_i.
\]
After division by $y_i\neq0$,
\[
e_i=\Delta_i-\frac{m_iS_ix_i}{y_i}
=\Delta_i-\frac{S_i}{\Theta_i},
\]
which is \eqref{eq:ei}.  Subtracting $4$ gives
\eqref{eq:ei-threshold}.
\end{proof}

\subsection{The fully mixed type \texorpdfstring{$(3,3)$}{(3,3)}}

Assume
\[
x_1x_2x_3y_1y_2y_3\neq0.
\]
Both $P$ and $\widehat Q$ are fully mixed.  Write
\[
P=\diag(c_1,c_2,c_3)+\varepsilon zz^T,
\qquad
\widehat Q=\diag(e_1,e_2,e_3)+\eta ww^T,
\]
and define
\[
\Theta_i=\frac{y_i}{m_ix_i}\neq0
\qquad(i=1,2,3).
\]

Because $\widehat Q\prec4I$, Lemma~\ref{lem:rankone} says that
$e_i<4$ for at least two indices.  By \eqref{eq:ei-threshold},
\[
e_i<4
\quad\Longleftrightarrow\quad
\frac{\Theta_i-1}{\Theta_i}<0
\quad\Longleftrightarrow\quad
0<\Theta_i<1.
\]
Hence the set
\begin{equation}\label{eq:index-set-small}
J_-:=\{i:0<\Theta_i<1\}
\end{equation}
has cardinality at least two.

Next apply $\Bmix^{-1/2}$ to the decomposition of $P$:
\begin{align*}
\widehat P
&=\Bmix^{-1/2}P\Bmix^{-1/2}\\
&=\diag\left(\frac{c_1}{b_1},
              \frac{c_2}{b_2},
              \frac{c_3}{b_3}\right)
 +\varepsilon\widetilde z\widetilde z^T,
\qquad
\widetilde z=\Bmix^{-1/2}z.
\end{align*}
The matrix $\widehat P$ is fully mixed and satisfies
$\widehat P\prec4I$, so Lemma~\ref{lem:rankone} implies
$c_i/b_i<4$ for at least two indices.  Formula
\eqref{eq:ci-threshold} and the positivity of $S_i,b_i,a_i$ give
\begin{align*}
\frac{c_i}{b_i}<4
&\quad\Longleftrightarrow\quad
b_i-a_i\Theta_i<0\\
&\quad\Longleftrightarrow\quad
\Theta_i>\frac{b_i}{a_i}
=1+\frac1{a_i}>1.
\end{align*}
Thus the set
\begin{equation}\label{eq:index-set-large}
J_+:=\left\{i:\Theta_i>\frac{b_i}{a_i}\right\}
\end{equation}
also has cardinality at least two.

Any two subsets of a three-element set with cardinality at least two have nonempty intersection.  Choose
$i\in J_-\cap J_+$.  Then simultaneously
\[
0<\Theta_i<1
\qquad\text{and}\qquad
\Theta_i>1+\frac1{a_i}>1,
\]
which is impossible.  Therefore the support type $(3,3)$ cannot occur.

\subsection{The type \texorpdfstring{$(2,3)$}{(2,3)}}

Assume that $i$ is the missing $P$-edge:
\begin{equation}\label{eq:23-support}
x_i=0,
\qquad
x_jx_k\neq0,
\qquad
y_1y_2y_3\neq0.
\end{equation}
For $r=j$, the cofactor formula \eqref{eq:cofedge} gives
\[
(\cof P)_{ki}=x_kx_i-p_jx_j=-p_jx_j.
\]
For $r=k$ it gives similarly
\[
(\cof P)_{ij}=x_ix_j-p_kx_k=-p_kx_k.
\]
Thus, for each $r\in\{j,k\}$, equation \eqref{eq:offP} becomes
\begin{equation}\label{eq:23-edge-equation}
-b_r\Delta_rx_r+m_rS_ry_r=-p_rx_r,
\end{equation}
or equivalently
\begin{equation}\label{eq:23-rearranged}
m_rS_ry_r=(b_r\Delta_r-p_r)x_r.
\end{equation}
All factors $m_r,x_r,y_r$ are nonzero, so define
\[
\Theta_r=\frac{y_r}{m_rx_r}.
\]
Dividing \eqref{eq:23-rearranged} by $x_r$ gives
\begin{equation}\label{eq:23-theta-exact}
a_rS_r\Theta_r=b_r\Delta_r-p_r.
\end{equation}
The diagonal estimate $p_r/b_r<4$ in
\eqref{eq:diagonal-strict-bounds} implies
\[
b_r\Delta_r-p_r
>b_r(\Delta_r-4)
=b_rS_r.
\]
Since $a_rS_r>0$, division yields
\begin{equation}\label{eq:23-theta-large}
\Theta_r>\frac{b_r}{a_r}>1
\qquad(r=j,k).
\end{equation}

Because $Q$ is fully mixed, write
\[
\widehat Q=\diag(e_1,e_2,e_3)+\eta ww^T.
\]
For $r=j,k$, Lemma~\ref{lem:edge-ratios}(ii) applies.  Using
\eqref{eq:23-theta-large},
\[
e_r-4
=S_r\frac{\Theta_r-1}{\Theta_r}>0.
\]
Hence
\[
e_j>4,
\qquad
e_k>4.
\]
At most the remaining number $e_i$ can be below $4$, whereas
Lemma~\ref{lem:rankone} applied to
$\widehat Q\prec4I$ requires at least two of the $e_r$ to be below $4$.  This contradiction excludes the type $(2,3)$.

\subsection{The type \texorpdfstring{$(3,2)$}{(3,2)}}

Assume that $i$ is the missing $Q$-edge:
\begin{equation}\label{eq:32-support}
y_i=0,
\qquad
y_jy_k\neq0,
\qquad
x_1x_2x_3\neq0.
\end{equation}
For $r=j$, formula \eqref{eq:cofedge} gives
\[
(\cof Q)_{ki}=y_ky_i-q_jy_j=-q_jy_j,
\]
and similarly
\[
(\cof Q)_{ij}=-q_ky_k.
\]
Thus equation \eqref{eq:offQ} becomes, for $r=j,k$,
\begin{equation}\label{eq:32-edge-equation}
m_rS_rx_r-\Delta_ry_r=-\zeta q_ry_r,
\end{equation}
or
\begin{equation}\label{eq:32-rearranged}
m_rS_rx_r=(\Delta_r-\zeta q_r)y_r.
\end{equation}
Define
\[
\Theta_r=\frac{y_r}{m_rx_r}
\qquad(r=j,k).
\]
After division by $S_ry_r$, equation
\eqref{eq:32-rearranged} gives
\begin{equation}\label{eq:32-theta-inverse}
\frac1{\Theta_r}
=\frac{m_rx_r}{y_r}
=\frac{\Delta_r-\zeta q_r}{S_r}.
\end{equation}
The diagonal estimate $\zeta q_r<4$ implies
\[
\Delta_r-\zeta q_r>\Delta_r-4=S_r,
\]
so \eqref{eq:32-theta-inverse} yields
\begin{equation}\label{eq:32-theta-small}
0<\Theta_r<1
\qquad(r=j,k).
\end{equation}

Now $P$ is fully mixed.  Write
\[
P=\diag(c_1,c_2,c_3)+\varepsilon zz^T.
\]
Because $m_ix_i\neq0$ and $y_i=0$, we may also set
\[
\Theta_i=\frac{y_i}{m_ix_i}=0.
\]
Formula \eqref{eq:ci-threshold} applies to all three indices.  At the missing $Q$-edge,
\[
\frac{c_i}{b_i}-4
=\frac{S_i}{b_i}b_i=S_i>0.
\]
For $r=j,k$, using \eqref{eq:32-theta-small},
\[
b_r-a_r\Theta_r
=1+a_r(1-\Theta_r)>0,
\]
and therefore
\[
\frac{c_r}{b_r}-4
=\frac{S_r}{b_r}(b_r-a_r\Theta_r)>0.
\]
We have proved
\[
\frac{c_1}{b_1}>4,
\qquad
\frac{c_2}{b_2}>4,
\qquad
\frac{c_3}{b_3}>4.
\]
On the other hand,
\[
\widehat P
=\diag\left(\frac{c_1}{b_1},
              \frac{c_2}{b_2},
              \frac{c_3}{b_3}\right)
 +\varepsilon\widetilde z\widetilde z^T
\prec4I
\]
is fully mixed, so Lemma~\ref{lem:rankone} requires at least two of the diagonal remainders $c_r/b_r$ to be smaller than $4$.  This contradiction excludes the type $(3,2)$.

\subsection{The type \texorpdfstring{$(2,2)$}{(2,2)}}

By Lemma~\ref{lem:support-class}, the missing indices are different.  Let
\begin{equation}\label{eq:22-support}
x_i=0,
\qquad
y_j=0,
\qquad
i\neq j,
\end{equation}
and let $k$ be the remaining index.  Since $|I_P|=2$,
\[
x_j\neq0,
\qquad
x_k\neq0.
\]
For the $j$th edge equation, the cofactor formula gives
\[
(\cof P)_{ki}=x_kx_i-p_jx_j=-p_jx_j.
\]
Because $y_j=0$, equation \eqref{eq:offP} at index $j$ is therefore
\[
-b_j\Delta_jx_j=-p_jx_j.
\]
Division by $x_j\neq0$ gives the identity
\[
\frac{p_j}{b_j}=\Delta_j=S_j+4.
\]
Since $S_j=a_i+a_k>0$,
\[
\frac{p_j}{b_j}>4.
\]
This contradicts the diagonal estimate $p_j/b_j<4$ in
\eqref{eq:diagonal-strict-bounds}.  Hence the type $(2,2)$ is impossible.

All four support types allowed by Lemma~\ref{lem:support-class} have now been excluded.  Therefore no full-rank left-invariant Einstein metric with $\ell>0$ can have trivial inner isotropy.

\section{Proof of automatic symmetry}\label{sec:auto-proof}

\begin{proof}[Proof of Theorem~\ref{thm:auto}]
Let $(P,Q,M)$ be the graph triple of a left-invariant Einstein metric
$g$, and suppose that $K(g)=\{e\}$.
Lemma~\ref{lem:rank0} excludes $M=0$, while
Lemmas~\ref{lem:rank1} and \ref{lem:rank2} exclude ranks one and two.
Thus $M$ is invertible. Lemma~\ref{lem:full-ell0} also excludes
$\ell=0$, so $\ell>0$.

Choose signed-SVD gauge and normalize by $\rho_P=1$.
Corollary~\ref{cor:first-bound} and Proposition~\ref{prop:second-bound}
give both strict bounds \eqref{eq:two-strict-bounds}.
A common invariant coordinate axis of $P,Q$ would yield a nonidentity
inner isometry by Lemma~\ref{lem:stabilizer}. Therefore there is no
such axis, and Lemma~\ref{lem:support-class} restricts the support
type to $(3,3)$, $(2,3)$, $(3,2)$, or $(2,2)$, with distinct
missing indices in the last case.
Section~\ref{sec:fullrank-exclusion} excludes each of these four
types using these bounds and the off-diagonal Einstein
equations. This contradiction proves $K(g)\neq\{e\}$.
\end{proof}

\section{The remaining trace \texorpdfstring{$-2$}{-2} involution case: setup}\label{sec:z2-setup}

We now prove Theorem~\ref{thm:z2upgrade}, using the normal-form
coordinates of \cite[Section~3.2]{BCHL}. The theorem is invariant
under homothety, so in this part we impose unit \emph{relative} volume:
\[
V(g)=\frac{\operatorname{Vol}(G,g)}{\operatorname{Vol}(G,g_{\rm can})}=1.
\]
This normalization replaces the condition $\rho_P=1$ used in the
proof of Theorem~\ref{thm:auto}. The two normalizations are not imposed
simultaneously.

Assume that $g$ is preserved by an inner involution $\sigma$ with
$\tr\sigma=-2$. After an inner change of oriented bases in the two
factors, the normal form in \cite[Section~3.2]{BCHL} gives the
following $g$-orthonormal basis:
\begin{equation}\label{eq:z2-normal}
\begin{aligned}
(X_1,X_2,X_3,Y_1,Y_2,Y_3)
={}&(a_0E_1,b_0E_2,c_0E_3,\\[-0.25em]
&x_0E_1+d_0F_1,
 y_0E_2+w_0E_3+e_0F_2,
 \gamma_0E_2+z_0E_3+f_0F_3),
\end{aligned}
\end{equation}
where $a_0,\ldots,f_0>0$.  After the change of variables
\begin{equation}\label{eq:z2-coordinates}
\begin{gathered}
a_0=\sqrt{BC},\quad b_0=\sqrt{AC},\quad c_0=\sqrt{AB},
\qquad
d_0=\sqrt{EF},\quad e_0=\sqrt{DF},\quad f_0=\sqrt{DE},\\
x_0=X\sqrt{BC},\quad y_0=Y\sqrt{AC},\quad z_0=Z\sqrt{AB},
\quad w_0=W\sqrt{AB},\quad \gamma_0=\Th\sqrt{AC},
\end{gathered}
\end{equation}
we have
\[
A,B,C,D,E,F>0,
\qquad X,Y,Z,W,\Th\in\mathbb R,
\qquad ABCDEF=1.
\]
The last equation is the unit-relative-volume normalization, since $V(g)=(ABCDEF)^{-1}$.  The symbols $X,Y,Z,W,\Th$ are mixing parameters; they are real and need not be positive.  The symbol $\Th$ in this part is unrelated to the edge ratios $\Theta_i$ used in Sections~\ref{sec:fullrank-exclusion}--\ref{sec:auto-proof}.  Belgun--Cort\'es--Haupt--Lindemann computed the scalar curvature and the corresponding polynomial Einstein equations in these coordinates.  We reorganize those equations into a form adapted to the remaining cases.

Throughout this part, a \emph{branch} is a subset of the real solution set specified by the indicated parameter conditions.

Define
\begin{equation}\label{eq:z2-basic-defs}
\Delta:=YZ-W\Th,
\qquad
\Pi:=AD+BE+CF,
\qquad
\Omega:=AD+BF+CE,
\end{equation}
and
\begin{equation}\label{eq:z2-r-defs}
\begin{aligned}
\alpha_0&:=(B-C)^2+D^2,\\
r_1&:=(A-C)^2+E^2+C^2X^2,
& s_1&:=(A-B)^2+F^2+B^2X^2,\\
r_2&:=(A-B)^2+E^2+B^2X^2,
& s_2&:=(A-C)^2+F^2+C^2X^2.
\end{aligned}
\end{equation}
All four $r_i,s_i$ are strictly positive.  Put
\begin{equation}\label{eq:z2-S0}
S_0=-\frac12(A^2+B^2+C^2+D^2+E^2+F^2)
+AB+AC+BC+DE+DF+EF.
\end{equation}

\begin{lemma}[Paired form of scalar curvature]\label{lem:z2-regroup}
The scalar-curvature polynomial of \cite{BCHL} is equivalent to
\begin{equation}\label{eq:z2-S}
\begin{aligned}
S={}&S_0-\frac12\Bigl(
\alpha_0X^2+r_1Y^2+s_1Z^2+r_2W^2+s_2\Th^2+A^2\Delta^2
\Bigr)\\
&\quad+\Pi XYZ-\Omega XW\Th.
\end{aligned}
\end{equation}
\end{lemma}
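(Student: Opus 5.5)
The plan is to derive the paired form from the paper's own scalar-curvature formula, Theorem~\ref{thm:scalar}, rather than from the explicit polynomial of \cite{BCHL}. Since both are the scalar curvature of the same metric, equivalence with the BCHL polynomial follows once the orthonormal frame \eqref{eq:z2-normal} is identified with a graph frame. Appendix~\ref{app:dictionary} can serve as a term-by-term cross-check.

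\emph{Step 1: graph triple of the normal form.} Compare \eqref{eq:z2-normal} with the graph frame $X_i=d_iE_i$, $Y_i=\sum_r m_{ir}d_rE_r+e_iF_i$ from the proof of Theorem~\ref{thm:scalar}. Under \eqref{eq:z2-coordinates} this gives
\[
P=\diag(A,B,C),\qquad Q=\diag(D,E,F),
\]
because $d_1=\sqrt{BC}$, $d_2=\sqrt{AC}$, $d_3=\sqrt{AB}$ and similarly for $e_i$. Reading off $Y_2=Y\sqrt{AC}\,E_2+W\sqrt{AB}\,E_3+e_0F_2$ and the analogous expressions for $Y_1$ and $Y_3$ gives
\[
M=\begin{pmatrix}X&0&0\\0&Y&W\\0&\Th&Z\end{pmatrix}.
\]
Proposition~\ref{prop:graph} and the uniqueness of the graph triple then justify applying \eqref{eq:scalar} to this triple.

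\emph{Step 2: expand the four terms of \eqref{eq:scalar}.}
\begin{itemize}
\item For diagonal $H$, $\Phi(H)=-\tfrac12\sum h_i^2+\sum_{i<j}h_ih_j$, so $\Phi(P)+\Phi(Q)=S_0$.
\item By Lemma~\ref{lem:L-spectrum}, $\Lop(P)=\diag((B-C)^2,(C-A)^2,(A-B)^2)$. The diagonal of $M^TM$ is $(X^2,\,Y^2+\Th^2,\,W^2+Z^2)$, so
\[
\tr(M^TM\Lop(P))=X^2(B-C)^2+(Y^2+\Th^2)(A-C)^2+(W^2+Z^2)(A-B)^2 .
\]
\item The cofactor matrix $C=\cof M$ has $C_{11}=\Delta$, $C_{22}=XZ$, $C_{23}=-X\Th$, $C_{32}=-XW$, $C_{33}=XY$, and all other entries zero.
\item Hence $R=CP-QM$ has only the nonzero entries
\[
R_{11}=A\Delta-DX,\quad R_{22}=BXZ-EY,\quad R_{23}=-CX\Th-EW,\quad R_{32}=-BXW-F\Th,\quad R_{33}=CXY-FZ .
\]
\end{itemize}

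\emph{Step 3: regroup.} Expand $-\tfrac12\norm{R}_{\Frob}^2$.
\begin{itemize}
\item Squares: $A^2\Delta^2+D^2X^2+X^2(B^2Z^2+C^2\Th^2+B^2W^2+C^2Y^2)+E^2(Y^2+W^2)+F^2(\Th^2+Z^2)$.
\item Cross terms: $-2ADX\Delta$, $-2BEXYZ$, $+2CEXW\Th$, $+2BFXW\Th$, $-2CFXYZ$. Splitting $-2ADX\Delta=-2ADXYZ+2ADXW\Th$, these sum, after the factor $-\tfrac12$, to $\Pi XYZ-\Omega XW\Th$.
\end{itemize}
Now collect coefficients against the $\Lop$ term:
\begin{itemize}
\item $X^2$: $(B-C)^2+D^2=\alpha_0$.
\item $Y^2$: $(A-C)^2+E^2+C^2X^2=r_1$.
\item $Z^2$: $(A-B)^2+F^2+B^2X^2=s_1$.
\item $W^2$: $(A-B)^2+E^2+B^2X^2=r_2$.
\item $\Th^2$: $(A-C)^2+F^2+C^2X^2=s_2$.
\end{itemize}
This yields \eqref{eq:z2-S}.

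The only delicate point is Step~1: matching the BCHL frame exactly to the lower-triangular graph convention, including which index of $M$ is the row (the $Y$-vector) and which is the column (the $E$-direction). An error there transposes $M$ and swaps $W$ with $\Th$, which swaps $r_2$ and $s_2$. I would guard against this by recomputing $\Kill(Y_2,Y_2)$ directly from \eqref{eq:z2-normal} and checking it against the $\tr(M^TM\cof P)$ term in the proof of Theorem~\ref{thm:scalar}. The rest is routine bookkeeping.
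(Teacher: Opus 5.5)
Your proposal is correct, but it follows a different route from the paper.

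\textbf{The paper's route.} The paper's proof is a bookkeeping check against an external formula. It expands $-\tfrac12A^2(YZ-W\Th)^2$ in the paired form, collects terms, and observes that the result coincides term by term with \cite[Eq.~(3.17)]{BCHL}.

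\textbf{Your route.} You derive the paired form from the paper's own Theorem~\ref{thm:scalar}. You read the normal-form frame \eqref{eq:z2-normal} as the graph frame $(\mathbf E,\mathbf F)A^T$ with $P=\diag(A,B,C)$, $Q=\diag(D,E,F)$ and $M=\begin{pmatrix}X&0&0\\0&Y&W\\0&\Th&Z\end{pmatrix}$. Your entries of $\cof M$ and $R$, the five cross terms, and the coefficients $\alpha_0,r_1,s_1,r_2,s_2$ all check out. This includes the orientation $M_{23}=W$, $M_{32}=\Th$, which is what puts $r_2$ on $W^2$ and $s_2$ on $\Th^2$.

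\textbf{What your route buys.} It is self-contained and explains the structure of the formula. It does not depend on the correctness or transcription of the BCHL polynomial. It also shows why the formula is ``paired'':
\begin{itemize}
\item $\cof M$ has $\Delta$, the determinant of the $2\times2$ block, as its $(1,1)$ entry, and $X$ times the $2\times2$ cofactor matrix of that block in its lower-right corner;
\item hence $\norm{R}_{\Frob}^2$ produces the quartic term $A^2\Delta^2$;
\item the single cross term $-2ADX\Delta$ supplies exactly the $AD$ parts of $\Pi$ and $\Omega$, and the remaining parts come from $R_{22},R_{33}$ and $R_{23},R_{32}$.
\end{itemize}

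\textbf{What the paper's route buys.} It is brief, and it gives a literal identification with the BCHL system that Appendix~\ref{app:z2-system} compares against. In your version that identification is indirect. Either both polynomials compute the scalar curvature of the same metric in the same coordinates, or one invokes the dictionary of Appendix~\ref{app:dictionary}. This suffices logically, but it is an extra step rather than a term-by-term match.
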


\begin{proof}
Expand
\[
-\frac12A^2(YZ-W\Th)^2
=-\frac12A^2Y^2Z^2+A^2WYZ\Th-\frac12A^2W^2\Th^2
\]
and collect the quadratic terms in $X,Y,Z,W,\Th$.  This recovers the scalar-curvature expression of \cite[Eq. (3.17)]{BCHL} term by term.
\end{proof}

For a function $S=S(A,B,C,D,E,F,X,Y,Z,W,\Th)$, subscripts such as $S_A$ and $S_X$ denote ordinary partial derivatives.  On the normalized-volume hypersurface $ABCDEF=1$, the Lagrange-multiplier equations for an Einstein metric are
\begin{equation}\label{eq:z2-diag-einstein}
AS_A=BS_B=CS_C=DS_D=ES_E=FS_F=-\mu,
\qquad
S_X=S_Y=S_Z=S_W=S_{\Th}=0.
\end{equation}
Define
\begin{equation}\label{eq:z2-uv}
u:=X\Pi-A^2\Delta,
\qquad
v:=-X\Omega+A^2\Delta.
\end{equation}
Differentiating \eqref{eq:z2-S} gives four equations that split into two coupled pairs, one involving $(Y,Z)$ and one involving $(W,\Th)$:
\begin{equation}\label{eq:z2-pair-eqs}
r_1Y=uZ,
\qquad s_1Z=uY,
\qquad r_2W=v\Th,
\qquad s_2\Th=vW,
\end{equation}
and the $X$-equation
\begin{equation}\label{eq:z2-X-eq}
X\Hc=\Pi YZ-\Omega W\Th,
\end{equation}
where
\begin{equation}\label{eq:z2-H}
\Hc=(B-C)^2+D^2+B^2(W^2+Z^2)+C^2(Y^2+\Th^2)>0.
\end{equation}

Equation \eqref{eq:z2-pair-eqs} immediately implies
\begin{equation}\label{eq:z2-pair-zero}
Y=0\Longleftrightarrow Z=0,
\qquad
W=0\Longleftrightarrow\Th=0.
\end{equation}

\begin{definition}[Fully mixed anisotropic BCHL branch]
In the BCHL coordinates used in this part of the paper, the two \emph{paired mixing sectors} are the parameter pairs $(Y,Z)$ and $(W,\Th)$.  A solution is called
\emph{fully mixed} if both sectors are active:
\[
YZ\neq0,\qquad W\Th\neq0.
\]
By \eqref{eq:z2-pair-zero}, this is equivalent to requiring each of
$Y,Z,W,\Th$ to be nonzero.  It is called \emph{anisotropic} if, in addition,
\[
B\neq C,\qquad E\neq F.
\]
Thus the fully mixed anisotropic BCHL branch is precisely
\begin{equation}\label{eq:z2-genuine}
(YZ)(W\Th)(B-C)(E-F)\neq0.
\end{equation}
This terminology is separate from
Definition~\ref{def:fully-mixed-matrix}: here it describes two active mixing
pairs in the residual $\mathbb Z_2$ normal form, rather than the three
off-diagonal entries of a symmetric $3\times3$ matrix.
\end{definition}

We first exclude the fully mixed anisotropic branch. The complementary parameter cases are treated in Section~\ref{sec:z2-upgrade}.

\section{Sign structure and necessary inequalities}\label{sec:z2-sign}

Let
\begin{equation}\label{eq:z2-RV}
R_0:=\sqrt{r_1s_1}>0,
\qquad
V_0:=\sqrt{r_2s_2}>0.
\end{equation}
On the branch \eqref{eq:z2-genuine}, equations \eqref{eq:z2-pair-eqs} imply
\begin{equation}\label{eq:z2-u2v2}
u^2=R_0^2,
\qquad
v^2=V_0^2,
\end{equation}
and
\begin{equation}\label{eq:z2-sign-products}
\operatorname{sgn}u=\operatorname{sgn}(YZ),
\qquad
\operatorname{sgn}v=\operatorname{sgn}(W\Th).
\end{equation}

\begin{lemma}\label{lem:z2-strict-RV}
On \eqref{eq:z2-genuine},
\begin{equation}\label{eq:z2-strict-RV}
R_0>|X|(BE+CF),
\qquad
V_0>|X|(BF+CE).
\end{equation}
\end{lemma}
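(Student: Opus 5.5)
The plan is to obtain both inequalities from a single Cauchy--Schwarz estimate applied to the explicit sums of squares that define $r_1,s_1,r_2,s_2$ in \eqref{eq:z2-r-defs}. I will use the Einstein equations only through the branch condition \eqref{eq:z2-genuine}, and there only the factor $B\neq C$, which is needed for strictness.

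For $R_0$, pair the vectors
\[
\xi=\bigl(E,\;C|X|,\;|A-C|\bigr),\qquad \eta=\bigl(B|X|,\;F,\;|A-B|\bigr)
\]
in $\mathbb R^3$. Then $\norm{\xi}^2=r_1$ and $\norm{\eta}^2=s_1$. Cauchy--Schwarz together with $E,F,B,C>0$ gives
\[
R_0=\norm{\xi}\,\norm{\eta}\ \ge\ \ip{\xi}{\eta}=|X|(BE+CF)+|A-C|\,|A-B|\ \ge\ |X|(BE+CF).
\]
For $V_0$, pair $\xi'=(E,\,B|X|,\,|A-B|)$ with $\eta'=(C|X|,\,F,\,|A-C|)$. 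Here $\norm{\xi'}^2=r_2$ and $\norm{\eta'}^2=s_2$, and the same chain gives $V_0\ge|X|(CE+BF)+|A-B|\,|A-C|\ge|X|(BF+CE)$.

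The main step is strictness. If $X=0$, the right-hand sides vanish while $R_0,V_0>0$, so the inequalities are strict. Suppose $X\neq0$ and equality holds in the first chain. Then both inequalities in the chain are equalities. Equality in the second forces $|A-C|\,|A-B|=0$. Equality in Cauchy--Schwarz forces $\xi=t\eta$ for some $t\ge0$, and in fact $t>0$ because $E>0$. If $A=C$, the third components give $0=t|A-B|$, so $A=B$. If $A=B$, then $|A-C|=0$, so again $A=C$. In either case $A=B=C$, which contradicts $B\neq C$ from \eqref{eq:z2-genuine}. The same argument applies to $(\xi',\eta')$ and gives strictness for $V_0$.

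The only delicate point is this equality analysis. The pure Cauchy--Schwarz equality conditions alone, such as $EF=BCX^2$, are not contradictory, so the contradiction has to come from the third components combined with the anisotropy $B\neq C$. The hypotheses $YZ\neq0$, $W\Th\neq0$ and $E\neq F$ are not needed for this lemma.
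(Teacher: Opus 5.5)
Your proof is correct and is essentially the paper's argument. The paper first discards $(A-C)^2$ and $(A-B)^2$ from $r_1s_1$ with a strict inequality, which holds because $B\neq C$ prevents both from vanishing; it then applies the Lagrange identity $(E^2+C^2X^2)(F^2+B^2X^2)-X^2(BE+CF)^2=(EF-BCX^2)^2$, which is your Cauchy--Schwarz step in two dimensions. Keeping the third components and getting strictness from the equality case instead is an equivalent bookkeeping of the same idea, and you are right that only $B\neq C$ is used.
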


\begin{proof}
First,
\begin{align*}
R_0^2
&=\bigl((A-C)^2+E^2+C^2X^2\bigr)
  \bigl((A-B)^2+F^2+B^2X^2\bigr)\\
&>(E^2+C^2X^2)(F^2+B^2X^2),
\end{align*}
where strictness follows because $B\neq C$, so $A-B$ and $A-C$ cannot both vanish.  Moreover
\[
(E^2+C^2X^2)(F^2+B^2X^2)-X^2(BE+CF)^2
=(EF-BCX^2)^2\ge0.
\]
This proves the first inequality; the second is analogous.
\end{proof}

From \eqref{eq:z2-uv},
\begin{equation}\label{eq:z2-uplusv}
u+v=X(\Pi-\Omega)=X(B-C)(E-F).
\end{equation}
If $u$ and $v$ had the same sign, Lemma~\ref{lem:z2-strict-RV} would give
\begin{align*}
|u+v|=R_0+V_0
&>|X|(BE+CF+BF+CE)\\
&=|X|(B+C)(E+F)\\
&\ge |X|\,|B-C|\,|E-F|=|u+v|,
\end{align*}
a contradiction.  Thus
\begin{equation}\label{eq:z2-opposite}
uv<0.
\end{equation}
If $X=0$, then $u=-A^2\Delta$ and $v=A^2\Delta$, so $YZ$ and $W\Th$ have opposite signs.  But \eqref{eq:z2-X-eq} becomes
$\Pi YZ-\Omega W\Th=0$, impossible because $\Pi,\Omega>0$.  Hence
\begin{equation}\label{eq:z2-Xnonzero}
X\neq0.
\end{equation}
The signs in \eqref{eq:z2-X-eq} now give
\[
\operatorname{sgn}X=\operatorname{sgn}(YZ)=-\operatorname{sgn}(W\Th).
\]
Set
\begin{equation}\label{eq:z2-sign-variables}
x:=|X|>0,
\qquad
\varepsilon:=\operatorname{sgn}X,
\qquad
\rho:=\varepsilon YZ>0,
\qquad
\omega:=-\varepsilon W\Th>0.
\end{equation}
Then
\begin{equation}\label{eq:z2-sign-final}
u=\varepsilon R_0,
\qquad
v=-\varepsilon V_0,
\qquad
\Delta=\varepsilon(\rho+\omega).
\end{equation}

Introduce
\begin{equation}\label{eq:z2-sbte}
s=B+C,
\qquad b=B-C,
\qquad t=E+F,
\qquad e=E-F.
\end{equation}
Thus $s,t>0$ and $be\neq0$.  Equations \eqref{eq:z2-uplusv} and \eqref{eq:z2-sign-final} give
\begin{equation}\label{eq:z2-RminusV}
R_0-V_0=xbe.
\end{equation}
Direct expansion of \eqref{eq:z2-r-defs} gives
\begin{equation}\label{eq:z2-R2minusV2}
r_1s_1-r_2s_2
=bet\,[s(1+x^2)-2A].
\end{equation}
Using $R_0^2-V_0^2=(R_0-V_0)(R_0+V_0)$ and dividing by $be\neq0$ yields
\begin{equation}\label{eq:z2-RplusV}
R_0+V_0=\frac{t}{x}[s(1+x^2)-2A].
\end{equation}
Lemma~\ref{lem:z2-strict-RV} also gives $R_0+V_0>xst$, hence
\begin{equation}\label{eq:z2-s-lower}
s>2A.
\end{equation}
Put
\begin{equation}\label{eq:z2-mdef}
m=s-2A>0.
\end{equation}
Adding and subtracting \eqref{eq:z2-RminusV} and \eqref{eq:z2-RplusV} gives the following formulas
\begin{equation}\label{eq:z2-R-exact}
R_0=x(BE+CF)+\frac{tm}{2x},
\qquad
V_0=x(BF+CE)+\frac{tm}{2x}.
\end{equation}

Let
\begin{equation}\label{eq:z2-delta}
\delta=|\Delta|=\rho+\omega>0.
\end{equation}
The first identity in \eqref{eq:z2-uv}, together with \eqref{eq:z2-R-exact}, gives
\begin{equation}\label{eq:z2-delta-exact}
\delta=\frac{Dx}{A}-\frac{tm}{2A^2x}.
\end{equation}
In particular
\begin{equation}\label{eq:z2-D-lower}
D>\frac{tm}{2Ax^2}.
\end{equation}
On the other hand, differentiating \eqref{eq:z2-S} gives
\[
S_D=-D+t-DX^2+AX\Delta.
\]
Since $X\Delta=x\delta$, substitution of \eqref{eq:z2-delta-exact} yields
\begin{equation}\label{eq:z2-SD}
S_D=-D+\frac{t(2A-m)}{2A}.
\end{equation}
At a solution of the Einstein equations, $DS_D=-\mu$.  By the degree-two homogeneity in $A,\ldots,F$,
\[
2S=AS_A+BS_B+CS_C+DS_D+ES_E+FS_F=-6\mu,
\]
so $-\mu=S/3>0$.  Therefore $S_D>0$, and \eqref{eq:z2-SD} gives
\begin{equation}\label{eq:z2-m-upper}
0<m<2A,
\qquad
D<\frac{t(2A-m)}{2A}.
\end{equation}
Combining this with \eqref{eq:z2-D-lower} gives
\begin{equation}\label{eq:z2-x-lower}
x^2>\frac{m}{2A-m}.
\end{equation}
Consequently, every fully mixed anisotropic solution lies in the region
\begin{equation}\label{eq:z2-chamber}
2A<B+C<4A,
\qquad
X^2>\frac{B+C-2A}{4A-B-C}.
\end{equation}

\section{An exact square identity and anisotropy bound}\label{sec:z2-square}

Set
\begin{equation}\label{eq:z2-betavars}
\xi=x^2,
\qquad
\beta=b^2,
\qquad
\eta=e^2,
\qquad
\tau=t^2.
\end{equation}
Since $s=2A+m$,
\begin{equation}\label{eq:z2-4r1s1}
\begin{aligned}
4r_1&=(b-m)^2+(t+e)^2+(2A+m-b)^2\xi,\\
4s_1&=(b+m)^2+(t-e)^2+(2A+m+b)^2\xi.
\end{aligned}
\end{equation}
Equation \eqref{eq:z2-R-exact} is equivalent to
\begin{equation}\label{eq:z2-2xR}
2xR_0=\mathfrak N,
\qquad
\mathfrak N:=t[m+(2A+m)\xi]+be\,\xi.
\end{equation}
Define
\begin{equation}\label{eq:z2-LM}
\begin{aligned}
\mathfrak L&=4A^2\xi+4Am\xi-\beta\xi+\beta+m^2\xi+m^2-\tau,\\
\mathfrak M&=m^2(1+\xi)+4Am\xi-\beta\xi.
\end{aligned}
\end{equation}

\begin{lemma}[Polynomial square identity]\label{lem:z2-square}
The following identity holds identically in the variables $A,m,b,t,e,\xi$:
\begin{equation}\label{eq:z2-square-identity}
16\xi r_1s_1-4\mathfrak N^2
=\xi(\eta+\mathfrak L)^2-4(\tau+\beta\xi)\mathfrak M.
\end{equation}
\end{lemma}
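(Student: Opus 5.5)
This is a polynomial identity, so the plan is direct verification after rewriting everything in the variables $A,m,b,t,e,\xi$. First express $B,C,E,F$ through these variables: $B=\tfrac12(s+b)$, $C=\tfrac12(s-b)$ with $s=2A+m$, and $E=\tfrac12(t+e)$, $F=\tfrac12(t-e)$. Then $A-C=\tfrac12(b-m)$ and $A-B=-\tfrac12(b+m)$, which gives \eqref{eq:z2-4r1s1} at once. Note also $BE+CF=\tfrac12(st+be)$, so $2x^2(BE+CF)+tm=\mathfrak N$ with $\xi=x^2$; this confirms the consistency of \eqref{eq:z2-2xR} with \eqref{eq:z2-R-exact}, although the identity itself does not use this.

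Next, organize the left side as $\xi(4r_1)(4s_1)-4\mathfrak N^2$. I would write $4r_1=\mathcal P-\mathcal Q$ and $4s_1=\mathcal P+\mathcal Q$. Here $\mathcal P$ is even in the pair $(b,e)$ jointly:
\[
\mathcal P=\beta+m^2+\tau+\eta+\bigl((2A+m)^2+\beta\bigr)\xi .
\]
The odd part is
\[
\mathcal Q=2bm-2te+2(2A+m)b\xi .
\]
Then $16r_1s_1=\mathcal P^2-\mathcal Q^2$, and the left side becomes $\xi\mathcal P^2-\xi\mathcal Q^2-4\mathfrak N^2$. On the right, $\eta+\mathfrak L=\mathcal P-2\tau-2\beta\xi+2\beta$ after collecting terms, or some such shift. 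Equivalently, $\eta+\mathfrak L$ differs from $\mathcal P$ by a term linear in $\tau,\beta$, so $\xi(\eta+\mathfrak L)^2-\xi\mathcal P^2$ factors as a difference of squares. The identity then reduces to checking that this difference plus $\xi\mathcal Q^2+4\mathfrak N^2$ equals $4(\tau+\beta\xi)\mathfrak M$. That check is a lower-degree identity, and I would compare it coefficient by coefficient in powers of $\xi$, from $\xi^0$ through $\xi^3$.

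The main obstacle is bookkeeping: both sides have degree four in $(A,m,b,t,e)$ and degree up to three in $\xi$. The cross terms $be$, $bte$ and $te$ are delicate and must cancel between $\mathcal Q^2$ and $\mathfrak N^2$. I would first check the terms odd in $e$: these are $-2\xi\cdot 2bm\cdot(-2te)$-type terms from $\mathcal Q^2$ and $8t\,be\,\xi[m+(2A+m)\xi]$ from $4\mathfrak N^2$, and they should cancel exactly, since the right side depends on $e$ only through $\eta$. I would then check the remaining even terms grouped by powers of $\xi$. As a sanity check, specializing $b=0$ or $e=0$ gives simpler identities that must hold. Since the paper has a Lean formalization, the expected proof is simply ``expand both sides'', which a \texttt{ring}-style normalization confirms.
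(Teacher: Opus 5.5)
Your proposal is correct and is essentially the paper's own proof: substitute the expressions for $4r_1,4s_1$ and $\mathfrak N$, observe that the terms odd in $e$ cancel between $\xi\mathcal Q^2$ and $4\mathfrak N^2$, and match the remaining terms. One correction: the shift is exactly $\eta+\mathfrak L=\mathcal P-2(\tau+\beta\xi)$, with no $+2\beta$. The reduced check is therefore $\xi\mathcal Q^2+4\mathfrak N^2=4(\tau+\beta\xi)\bigl[(m+s\xi)^2+\eta\xi\bigr]$, where $s=2A+m$. Setting $u=m+s\xi$, we have $\mathcal Q=2(bu-te)$ and $\mathfrak N=tu+\xi be$, so this is just the two-square identity $(t^2+\xi b^2)(u^2+\xi e^2)=(tu+\xi be)^2+\xi(bu-te)^2$.
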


\begin{proof}
Substitute \eqref{eq:z2-4r1s1} and the definition of $\mathfrak N$ into the left side.  The terms of odd degree in $e$ cancel.  Collecting the remaining terms gives
\[
16\xi r_1s_1-4\mathfrak N^2
=\xi\eta^2+2\xi\mathfrak L\eta+\xi\mathfrak L^2
-4(\tau+\beta\xi)\mathfrak M,
\]
which is \eqref{eq:z2-square-identity}.
\end{proof}

At a solution, $R_0^2=r_1s_1$ by definition and
$\mathfrak N=2xR_0$ by \eqref{eq:z2-2xR}.  Since $\xi=x^2$, the left-hand side of
\eqref{eq:z2-square-identity} vanishes.  Hence
\[
\xi(\eta+\mathfrak L)^2
=4(\tau+\beta\xi)\mathfrak M.
\]
Because $\xi>0$ and $\tau+\beta\xi>0$, one has $\mathfrak M\ge0$, and therefore
\begin{equation}\label{eq:z2-b-bound-raw}
b^2\le4Am+m^2+\frac{m^2}{x^2}.
\end{equation}
Using \eqref{eq:z2-x-lower},
\[
\frac1{x^2}<\frac{2A-m}{m},
\]
and thus
\begin{equation}\label{eq:z2-b-bound}
b^2<6Am<4A(A+m).
\end{equation}
This strict bound will make the final quadratic form positive definite.

\section{A diagonal Einstein difference with fixed sign}\label{sec:z2-difference}

Among the six diagonal Lagrange-multiplier equations in \eqref{eq:z2-diag-einstein}, the $B$- and $C$-equations require
\[
BS_B=CS_C.
\]
We will show that this equality is impossible on the fully mixed anisotropic BCHL branch by proving that, after multiplication by $B-C$, the difference $BS_B-CS_C$ has a definite negative sign.

Set
\begin{equation}\label{eq:z2-UBUC}
U_B=A-B(1+x^2),
\qquad
U_C=A-C(1+x^2),
\end{equation}
and
\begin{equation}\label{eq:z2-kell}
k=\frac{R_0}{r_1}=\sqrt{\frac{s_1}{r_1}}>0,
\qquad
\ell_0=\frac{V_0}{r_2}=\sqrt{\frac{s_2}{r_2}}>0.
\end{equation}
The paired equations and the sign conventions imply
\begin{equation}\label{eq:z2-amplitudes}
Y^2=k\rho,
\qquad
Z^2=\frac{\rho}{k},
\qquad
W^2=\ell_0\omega,
\qquad
\Th^2=\frac{\omega}{\ell_0}.
\end{equation}
A direct differentiation of \eqref{eq:z2-S} gives
\begin{align}
BS_B-CS_C
={}&AB-AC-(B^2-C^2)(1+X^2)\nonumber\\
&+B[A-B(1+X^2)](W^2+Z^2)\nonumber\\
&-C[A-C(1+X^2)](Y^2+\Th^2)\nonumber\\
&+X(BE-CF)YZ+X(CE-BF)W\Th.\label{eq:z2-raw-difference}
\end{align}
Substituting \eqref{eq:z2-sign-variables} and \eqref{eq:z2-amplitudes} yields
\begin{equation}\label{eq:z2-difference-decomp}
BS_B-CS_C
=b[A-s(1+x^2)]+\rho T_++\omega T_-,
\end{equation}
where
\begin{equation}\label{eq:z2-Tpm}
\begin{aligned}
T_+&=\frac{BU_B}{k}-CU_Ck+x(BE-CF),\\
T_-&=BU_B\ell_0-\frac{CU_C}{\ell_0}+x(BF-CE).
\end{aligned}
\end{equation}
Define
\begin{equation}\label{eq:z2-Hpm}
\begin{aligned}
H_-&=A(1+x^2)(m^2-b^2)-4A^3x^2-At^2-(A+m)e^2-bet,\\
H_+&=A(1+x^2)(m^2-b^2)-4A^3x^2-At^2-(A+m)e^2+bet.
\end{aligned}
\end{equation}

\begin{lemma}[Factorization of the difference terms]\label{lem:z2-Tfactor}
Using the relation \eqref{eq:z2-R-exact},
\begin{equation}\label{eq:z2-Tfactor}
4R_0T_+=bH_-,
\qquad
4V_0T_-=bH_+.
\end{equation}
\end{lemma}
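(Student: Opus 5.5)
The plan is to clear denominators and reduce \eqref{eq:z2-Tfactor} to polynomial identities. I treat $T_+$ first; $T_-$ will follow from the interchange $E\leftrightarrow F$, which sends $e\mapsto -e$, $r_1\mapsto s_2$, $s_1\mapsto r_2$ (up to the evident relabelling), $R_0\mapsto V_0$, and $H_-\mapsto H_+$.

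Since $k=R_0/r_1$ and $1/k=R_0/s_1=r_1/R_0$, multiplying $T_+$ by $4R_0$ gives
\[
4R_0T_+=4\bigl(BU_B\,r_1-CU_C\,s_1\bigr)+4xR_0(BE-CF).
\]
The first bracket is polynomial in $A,B,C,E,F,x$. The only non-polynomial term is $4xR_0(BE-CF)$, and I remove it with the exact relation \eqref{eq:z2-R-exact}, in the form $2xR_0=\mathfrak N$ of \eqref{eq:z2-2xR}. This rewrites it as $2\mathfrak N(BE-CF)$. So the claim reduces to the polynomial identity
\[
4\bigl(BU_Br_1-CU_Cs_1\bigr)+2\mathfrak N(BE-CF)=bH_-
\]
in the variables $(A,m,b,t,e,\xi)$.

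To verify it, I substitute $B=(s+b)/2$, $C=(s-b)/2$, $E=(t+e)/2$, $F=(t-e)/2$ and $s=2A+m$, $x^2=\xi$, using \eqref{eq:z2-4r1s1} for $r_1,s_1$.

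First I check that the left side is odd under $b\mapsto -b$, so that a factor $b$ must appear. Under $b\mapsto-b$ the roles $B\leftrightarrow C$ are exchanged, and at the same time $r_1\leftrightarrow s_1$ up to $e\mapsto -e$. So the antisymmetry is really joint in $(b,e)$. The term $bet$ in $H_-$, which is even under $(b,e)\mapsto(-b,-e)$, is consistent with this. It then suffices to expand and compare coefficients, organized by degree in $\xi$ (at most $2$) and in $e$ (at most $2$).

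The most delicate bookkeeping is the $\xi$-dependence. In $r_1$ the coefficient of $\xi$ is $C^2$, and $U_B$ contains $-B\xi$, so $BU_Br_1$ has $\xi^2$-terms $-B^2C^2\xi^2$; these cancel against the corresponding term in $CU_Cs_1$. The $\xi^1$ terms must then combine with $2\mathfrak N(BE-CF)$ to produce $A\xi(m^2-b^2)b-4A^3\xi b$.

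The main obstacle is this expansion: it is routine but long. I would organize it by collecting, in turn, the coefficients of $\xi^0$, $\xi^1$ and $\xi^2$, each as a polynomial in $(e,t)$. I would cross-check with a computer algebra system, or with the Lean formalization mentioned in the introduction.

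For $T_-$ I repeat the argument with $\ell_0=V_0/r_2$. Using \eqref{eq:z2-R-exact}, the identity $2xV_0=t[m+(2A+m)\xi]-be\,\xi$ follows, since $BF+CE$ differs from $BE+CF$ by $-be/2$. The same computation with $e\to-e$ then gives $4V_0T_-=bH_+$.
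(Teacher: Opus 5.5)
Your proposal is correct and follows the paper's proof essentially verbatim. You multiply by $R_0$ using $R_0/k=r_1$ and $R_0k=s_1$, replace $2xR_0$ by $\mathfrak N$, substitute the $(A,m,b,t,e,\xi)$ parametrization and collect terms. The identity does check out: the $\xi^2$ terms cancel, and the $\xi^0,\xi^1$ coefficients match $bH_-$. You then get $T_-$ from $E\leftrightarrow F$, i.e.\ $e\mapsto-e$.

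Two side remarks are slightly off but harmless. Joint oddness under $(b,e)\mapsto(-b,-e)$ does not by itself force a factor $b$; only the expansion does. Also $BF+CE=BE+CF-be$, not $-be/2$, although your resulting formula $2xV_0=t[m+(2A+m)\xi]-be\,\xi$ is correct.
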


\begin{proof}
Since $R_0/k=r_1$ and $R_0k=s_1$,
\[
R_0T_+=r_1BU_B-s_1CU_C+xR_0(BE-CF).
\]
Write
\[
B=\frac{2A+m+b}{2},\quad
C=\frac{2A+m-b}{2},\quad
E=\frac{t+e}{2},\quad
F=\frac{t-e}{2},
\]
and substitute \eqref{eq:z2-4r1s1} and \eqref{eq:z2-2xR}; collecting terms gives $4R_0T_+=bH_-$.  The calculation for $T_-$ is identical, with the sign of the term linear in $e$ reversed.
\end{proof}

\begin{lemma}\label{lem:z2-Hnegative}
On the fully mixed anisotropic BCHL branch,
\[
H_-<0,
\qquad
H_+<0.
\]
\end{lemma}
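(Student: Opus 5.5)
The plan is to split $H_\pm$ into a part that involves only $(m,b,x)$ and a binary quadratic form in $(t,e)$, and to show that each part is negative. The two parts use the two inequalities already obtained on the branch: \eqref{eq:z2-x-lower} for the first and \eqref{eq:z2-b-bound} for the second. Concretely, I would write
\[
H_\mp=\Bigl[A(1+x^2)m^2-4A^3x^2\Bigr]-A(1+x^2)b^2-\Bigl[At^2\pm bet+(A+m)e^2\Bigr].
\]
This is the definition \eqref{eq:z2-Hpm} regrouped, with $H_-$ carrying $+bet$ inside the last bracket and $H_+$ carrying $-bet$.

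\emph{Step 1 (the $(m,x)$ part).} Write $A(1+x^2)m^2-4A^3x^2=Am^2-Ax^2(4A^2-m^2)$. By \eqref{eq:z2-m-upper} we have $0<m<2A$, so $4A^2-m^2=(2A-m)(2A+m)>0$. Then \eqref{eq:z2-x-lower}, $x^2>m/(2A-m)$, gives
\[
Am^2-Ax^2(2A-m)(2A+m)<Am^2-Am(2A+m)=-2A^2m<0 .
\]
The term $-A(1+x^2)b^2$ is nonpositive, indeed strictly negative since $b\neq0$. So the first two groups sum to something strictly negative.

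\emph{Step 2 (the $(t,e)$ form).} The form $q_\pm(t,e)=At^2\pm bte+(A+m)e^2$ has positive leading coefficient $A$ and discriminant $b^2-4A(A+m)$. The strict bound \eqref{eq:z2-b-bound}, $b^2<6Am<4A(A+m)$, makes this discriminant negative. Hence $q_\pm$ is positive definite and $q_\pm(t,e)>0$ for $(t,e)\neq(0,0)$; here this holds because $t>0$. The sign of the cross term $\pm bet$ is therefore irrelevant, which is why $H_+$ and $H_-$ are treated simultaneously. This is exactly the role announced for \eqref{eq:z2-b-bound} at the end of Section~\ref{sec:z2-square}.

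\emph{Conclusion.} Each $H_\mp$ is a strictly negative quantity from Step~1 minus a strictly positive quantity from Step~2, so $H_\pm<0$.

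I do not expect a serious obstacle, since the needed inequalities are already established. The only point to check carefully is Step~1. There the positive term $Am^2$, which carries no factor of $x^2$, must be absorbed, and this uses the lower bound on $x^2$ in the specific form $x^2(2A-m)>m$, together with $m<2A$ so that the multiplier $(2A-m)(2A+m)$ is positive. If that bookkeeping failed, I would fall back on \eqref{eq:z2-b-bound-raw} to trade part of the $b^2$ term, but the direct estimate above appears sufficient.
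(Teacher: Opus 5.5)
Your proposal is correct and follows the paper's proof. It uses the same split into an $(m,x,b)$ part, made negative by \eqref{eq:z2-x-lower}, and a binary quadratic form in $(t,e)$, made positive definite by \eqref{eq:z2-b-bound}. The only cosmetic difference is that the paper completes the square and gets strictness from $e\neq0$, whereas you use the discriminant and $t>0$; these are equivalent.
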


\begin{proof}
From \eqref{eq:z2-x-lower},
\begin{align*}
(1+x^2)m^2-4A^2x^2
&=m^2-x^2(4A^2-m^2)\\
&<m^2-\frac{m}{2A-m}(2A-m)(2A+m)\\
&=-2Am<0.
\end{align*}
Hence
\begin{equation}\label{eq:z2-first-negative}
A(1+x^2)(m^2-b^2)-4A^3x^2<0.
\end{equation}
For either sign,
\begin{equation}\label{eq:z2-quadratic-positive}
At^2+(A+m)e^2\pm bet
=A\left(t\pm\frac{be}{2A}\right)^2
+\left(A+m-\frac{b^2}{4A}\right)e^2.
\end{equation}
By \eqref{eq:z2-b-bound}, the second coefficient is strictly positive, and $e\neq0$.  Thus \eqref{eq:z2-quadratic-positive} is strictly positive.  Combining this with \eqref{eq:z2-first-negative} proves $H_\pm<0$.
\end{proof}

Multiplying \eqref{eq:z2-difference-decomp} by $b$ and using Lemma~\ref{lem:z2-Tfactor},
\begin{equation}\label{eq:z2-final-negative}
\begin{aligned}
b(BS_B-CS_C)
={}&b^2[A-s(1+x^2)]\\
&+\rho\frac{b^2H_-}{4R_0}
+\omega\frac{b^2H_+}{4V_0}.
\end{aligned}
\end{equation}
Every term on the right is strictly negative: $b^2>0$, $s>2A$, $\rho,\omega,R_0,V_0>0$, and $H_\pm<0$.  Therefore
\begin{equation}\label{eq:z2-strict-contradiction}
b(BS_B-CS_C)<0,
\end{equation}
contradicting $BS_B=CS_C$.  We have proved:

\begin{proposition}[No fully mixed anisotropic branch]\label{prop:z2-no-genuine}
Every trace $-2$, $\mathbb Z_2$-invariant Einstein solution satisfies
\begin{equation}\label{eq:z2-degenerate}
(YZ)(W\Th)(B-C)(E-F)=0.
\end{equation}
\end{proposition}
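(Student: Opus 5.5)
The argument is by contradiction, assembling the chain of Sections~\ref{sec:z2-sign}--\ref{sec:z2-difference}. Suppose a trace $-2$, $\mathbb Z_2$-invariant Einstein solution in the normal form \eqref{eq:z2-normal}--\eqref{eq:z2-coordinates} satisfies \eqref{eq:z2-genuine}. Then $YZ\neq0$ and $W\Th\neq0$, and by \eqref{eq:z2-pair-eqs} we get $u^2=r_1s_1$, $v^2=r_2s_2$ with the signs \eqref{eq:z2-sign-products}. Anisotropy $B\neq C$ is exactly what makes Lemma~\ref{lem:z2-strict-RV} strict. The identity \eqref{eq:z2-uplusv} then forces $uv<0$. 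The $X$-equation \eqref{eq:z2-X-eq} forces $X\neq0$ and fixes the sign pattern \eqref{eq:z2-sign-final}, so the sign variables $x,\varepsilon,\rho,\omega$ are well defined and positive.

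Next I would derive the chamber inequalities. Because $be\neq0$, one may divide the difference identity \eqref{eq:z2-R2minusV2} by $be$. This gives the exact formulas \eqref{eq:z2-R-exact} and, together with the strict lower bound from Lemma~\ref{lem:z2-strict-RV}, the inequality $m=s-2A>0$. Substituting into $u=X\Pi-A^2\Delta$ gives $\delta$ explicitly. Positivity of $\delta$ yields the lower bound \eqref{eq:z2-D-lower} on $D$. Positivity of $S_D$, which follows from $-\mu=S/3>0$ since the Einstein constant is positive (Section~\ref{sec:einstein-crit}), yields the upper bound \eqref{eq:z2-m-upper}. Combining the two bounds gives $x^2>m/(2A-m)$.

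The anisotropy bound $b^2<6Am$ comes from the square identity of Lemma~\ref{lem:z2-square}. At a solution its left side vanishes, so $\mathfrak M\ge0$, and the chamber bound on $x^2$ then gives \eqref{eq:z2-b-bound}. Finally, take the diagonal Einstein equation $BS_B=CS_C$ and expand it as in \eqref{eq:z2-difference-decomp} using the amplitudes \eqref{eq:z2-amplitudes}. The factorization of Lemma~\ref{lem:z2-Tfactor} turns $b(BS_B-CS_C)$ into \eqref{eq:z2-final-negative}. Lemma~\ref{lem:z2-Hnegative} makes $H_\pm<0$, where the completed-square form \eqref{eq:z2-quadratic-positive} uses both $e\neq0$ and $b^2<4A(A+m)$. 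Since also $A-s(1+x^2)<0$, every term is strictly negative. This contradicts $BS_B=CS_C$, so at least one factor in \eqref{eq:z2-degenerate} must vanish.

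The main obstacle is the verification of the polynomial identities: Lemma~\ref{lem:z2-square} and the factorization $4R_0T_+=bH_-$. These are long expansions in which the exact relation \eqref{eq:z2-R-exact} must be substituted for $xR_0$ at the right moment. I would check both by symbolic computation in the variables $A,m,b,t,e,\xi$, and I would confirm that the sign of the linear-in-$e$ term flips correctly between $H_-$ and $H_+$.
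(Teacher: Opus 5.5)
Your proposal is correct and follows the paper's argument step for step. That chain is: the sign analysis giving $uv<0$, $X\neq0$ and \eqref{eq:z2-sign-final}; the exact formulas \eqref{eq:z2-R-exact}, obtained by dividing $R_0^2-V_0^2$ by $R_0-V_0=xbe$, which you also need and not merely division by $be$; the chamber bound \eqref{eq:z2-x-lower} from $\delta>0$ and $S_D>0$; the anisotropy bound from $\mathfrak M\ge0$; and the sign-definite expression \eqref{eq:z2-final-negative}, which contradicts $BS_B=CS_C$. The two polynomial identities you flag, Lemma~\ref{lem:z2-square} and $4R_0T_+=bH_-$ (with $e\mapsto-e$ giving $H_+$), do hold as stated.
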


\section{Vanishing-parameter cases and the larger symmetry group}\label{sec:z2-upgrade}

By Proposition~\ref{prop:z2-no-genuine}, every solution has
$YZ=0$, $W\Th=0$, $B=C$, or $E=F$.
We call these the \emph{degenerate parameter cases}; the metric itself
remains positive definite.

\begin{proof}[Completion of the proof of Theorem~\ref{thm:z2upgrade}]
It remains to prove that every solution satisfying \eqref{eq:z2-degenerate} has at least $\mathbb Z_2^2$ inner symmetry.  After the initial inner change of basis, the assumed involution is $(R_1,R_1)$, where $R_1=\diag(1,-1,-1)$.  Every additional basis rotation used below lies in the oriented $2$--$3$ plane and hence commutes with this involution.

Write
\[
E_\perp=\binom{E_2}{E_3},
\qquad
F_\perp=\binom{F_2}{F_3},
\qquad
X_\perp=\binom{X_2}{X_3},
\qquad
Y_\perp=\binom{Y_2}{Y_3}.
\]
Set
\[
D_E=\begin{pmatrix}b_0&0\\0&c_0\end{pmatrix},
\qquad
D_F=\begin{pmatrix}e_0&0\\0&f_0\end{pmatrix}.
\]
Then the $2$--$3$ part of \eqref{eq:z2-normal} is
\[
X_\perp=D_EE_\perp,
\qquad
Y_\perp=NE_\perp+D_FF_\perp,
\]
where
\begin{equation}\label{eq:z2-N}
N=
\begin{pmatrix}y_0&w_0\\ \gamma_0&z_0\end{pmatrix}
=
\begin{pmatrix}
\sqrt{AC}\,Y&\sqrt{AB}\,W\\
\sqrt{AC}\,\Th&\sqrt{AB}\,Z
\end{pmatrix}.
\end{equation}
The larger $\mathbb Z_2^2$ symmetry becomes transparent when the $2\times2$ mixing block $N$ is diagonal.  We may act on the oriented bases $E_\perp,F_\perp$ by matrices $Q_E,Q_F\in SO(2)$; these are induced by inner automorphisms.  Independently, we may apply matrices $P_X,P_Y\in O(2)$ to the orthonormal frame pairs $X_\perp,Y_\perp$; this changes only the chosen orthonormal frame, not the metric.  Under
\[
E_\perp'=Q_EE_\perp,\quad F_\perp'=Q_FF_\perp,
\quad X_\perp'=P_XX_\perp,\quad Y_\perp'=P_YY_\perp,
\]
the three coefficient blocks become
\[
P_XD_EQ_E^T,\qquad P_YNQ_E^T,\qquad P_YD_FQ_F^T.
\]
We now choose these four matrices explicitly in each case.

If $YZ=0$, then \eqref{eq:z2-pair-zero} gives $Y=Z=0$, so
\[
N=\begin{pmatrix}0&w_0\\ \gamma_0&0\end{pmatrix}.
\]
Let
\[
J_2=\begin{pmatrix}0&-1\\1&0\end{pmatrix}\in SO(2).
\]
Replace the $E$-basis by $E_\perp'=J_2E_\perp$ and, simultaneously, the orthonormal frame pair by $X_\perp'=J_2X_\perp$.  Then
\[
X_\perp'=(J_2D_EJ_2^T)E_\perp',
\qquad
Y_\perp=(NJ_2^T)E_\perp'+D_FF_\perp.
\]
Here $J_2D_EJ_2^T=\diag(c_0,b_0)$ and $NJ_2^T=\diag(-w_0,\gamma_0)$, so both blocks are diagonal.  If $W\Th=0$, then \eqref{eq:z2-pair-zero} gives $W=\Th=0$, and $N$ is already diagonal.

Assume now $YZ\neq0$ and $W\Th\neq0$.  Then \eqref{eq:z2-degenerate} gives either $B=C$ or $E=F$.

If $B=C$, then $u+v=0$ and $r_1=r_2$.  The paired equations give
\[
r_1(Y\Th+WZ)=(r_1Y)\Th+(r_2W)Z=uZ\Th+v\Th Z=0,
\]
so
\begin{equation}\label{eq:z2-row-orthogonal}
Y\Th+WZ=0.
\end{equation}
Because $B=C$, the two rows of $N$ are orthogonal.  Writing $N_{1*}$ and $N_{2*}$ for the first and second rows,
\[
\ip{N_{1*}}{N_{2*}}=AB(Y\Th+WZ)=0.
\]
Choose $Q_2\in SO(2)$ such that $NQ_2^T$ is diagonal.  Since $B=C$ is equivalent to $D_E=b_0I_2$, replace $E_\perp$ by $E_\perp'=Q_2E_\perp$ and the orthonormal frame pair by $X_\perp'=Q_2X_\perp$.  Then
\[
X_\perp'=b_0E_\perp',
\qquad
Y_\perp=(NQ_2^T)E_\perp'+D_FF_\perp,
\]
so the required diagonal form is preserved in the first block and achieved in the mixing block.

If $E=F$, then $u+v=0$ and $r_1=s_2$.  The paired equations similarly give
\begin{equation}\label{eq:z2-column-orthogonal}
YW+\Th Z=0.
\end{equation}
Hence the columns of $N$ are orthogonal.  Choose $P_2\in SO(2)$ such that $P_2N$ is diagonal.  Since $E=F$ is equivalent to $D_F=e_0I_2$, replace the orthonormal frame pair by $Y_\perp'=P_2Y_\perp$ and the $F$-basis by $F_\perp'=P_2F_\perp$.  Then
\[
Y_\perp'=(P_2N)E_\perp+e_0F_\perp',
\]
again giving diagonal blocks.

Thus every degenerate branch is equivalent, by inner automorphisms and orthogonal changes of orthonormal frame, to
\begin{equation}\label{eq:z2x2-normal}
(X_1,X_2,X_3,Y_1,Y_2,Y_3)
=(a_0E_1,b_0E_2,c_0E_3,
 x_0E_1+d_0F_1,
 y_0E_2+e_0F_2,
 z_0E_3+f_0F_3),
\end{equation}
with possibly relabelled positive diagonal coefficients.  Let
\[
R_1=\diag(1,-1,-1),\quad
R_2=\diag(-1,1,-1),\quad
R_3=\diag(-1,-1,1).
\]
Then
\[
\Gamma_0=\{(I,I),(R_1,R_1),(R_2,R_2),(R_3,R_3)\}
\subset\Ad(G)
\]
preserves \eqref{eq:z2x2-normal} and is isomorphic to $\mathbb Z_2^2$: each of its elements sends every vector in the displayed orthonormal frame to that same vector up to sign.  It contains the normalized form $(R_1,R_1)$ of the original involution.  Conjugating back by the inner automorphisms used in the reduction therefore gives a Klein four subgroup containing $\sigma$, as asserted.
\end{proof}

\section{Synthesis: proof of the classification theorem}\label{sec:synthesis}

\begin{proof}[Proof of Theorem~\ref{thm:classification}]
Let $g$ be any left-invariant Einstein metric on $G$.  Theorem~\ref{thm:auto} shows first that $K(g)$ contains a nonidentity element, so the case of trivial inner isotropy does not occur.

Because $K(g)$ is a compact Lie subgroup of $\Ad(G)$, it is either finite or positive-dimensional.  If it has positive dimension, its identity component contains a circle subgroup, and \cite[Theorem~1]{BCHL} gives, up to homothety and isometry, $g_{\rm can}$ or $g_{\rm NK}$.  It remains to consider finite $K(g)$.  If this finite group is not isomorphic to a single copy of $\mathbb Z_2$, \cite[Theorem~2]{BCHL}, applied with $\Gamma=K(g)$, again gives the same two metrics.  Therefore the only case not yet covered by those results is
\[
K(g)\cong\mathbb Z_2.
\]

Let $\sigma$ be the nonidentity element of this group.  As noted above, a nonidentity involution in $SO(3)\times SO(3)$ has trace either $2$ or $-2$.  The trace $2$ case is classified in \cite[Proposition~7]{BCHL}.  If $\tr\sigma=-2$, Theorem~\ref{thm:z2upgrade} shows that $g$ is in fact invariant under a Klein four subgroup.  The metric therefore falls under \cite[Theorem~2]{BCHL}, giving the same classification.

All possibilities for $K(g)$ have now been exhausted, which proves the theorem.
\end{proof}

\appendix
\section*{Acknowledgements}
The author Sixuan Gu is simultaneously supervised by Professor Jiu-Kang Yu of Hetao Institute of Mathematics and Interdisciplinary Sciences and Professor Caihua Luo of the Chinese University of Hong Kong (Shenzhen), and is funded by the latter during this research.  
\section*{Declaration of generative AI and AI-assisted technologies in the manuscript preparation process}

During the preparation of this work, the author used OpenAI's ChatGPT to explore proof strategies and assist in the drafting. The author reviewed and edited the resulting material and takes full responsibility for the content of the publication.

\section{Dictionary with the explicit BCHL scalar-curvature polynomial}\label{app:dictionary}

For the graph-coordinate formula \eqref{eq:scalar}, take
\[
P=\diag(A,B,C),
\qquad
Q=\diag(D,E,F),
\qquad
M=
\begin{pmatrix}
X&U&V\\
\mathcal A&Y&W\\
\mathcal B&\mathcal C&Z
\end{pmatrix}.
\]
Then
\[
(\cof P)^{1/2}=\diag(\sqrt{BC},\sqrt{AC},\sqrt{AB}),
\]
and the lower-left block in the explicit coordinates of \cite{BCHL} is
\[
W_{\rm BCHL}=M(\cof P)^{1/2}.
\]
The terms depending only on $P$ are
\[
AB+AC+BC-\frac12(A^2+B^2+C^2)=\Phi(P),
\]
and similarly for $Q$.  The coefficient of the squared norm of the $i$th column of $M$ is
\[
p_jp_k-\frac12(p_j^2+p_k^2)
=-\frac12(p_j-p_k)^2,
\]
so the column terms sum to
\[
-\frac12\tr(M^TM\Lop(P)).
\]
Finally, the remaining nine squares are 
\[
\bigl(p_j(\cof M)_{ij}-q_iM_{ij}\bigr)^2,
\]
the nine component squares of
\[
R=(\cof M)P-QM.
\]
Thus, after the displayed substitution, the coordinate-free formula \eqref{eq:scalar} is algebraically equivalent term by term to the explicit polynomial in \cite{BCHL}.

\section{Polynomial form of the trace \texorpdfstring{$-2$}{-2} system}\label{app:z2-system}

For comparison with \cite[Eq. (3.18)]{BCHL}, the normalized-volume constraint and diagonal equations are
\begin{align*}
0={}&ABCDEF-1,\\
0={}&-D+E+F+ABCEF\mu-DX^2+AXYZ-AWX\Th,\\
0={}&D-E+F+ABCDF\mu-EW^2-EY^2+BXYZ-CWX\Th,\\
0={}&A-B+C+ACDEF\mu+AW^2-BW^2-BX^2+CX^2-BW^2X^2\\
&\quad+EXYZ+AZ^2-BZ^2-BX^2Z^2-FWX\Th,\\
0={}&D+E-F+ABCDE\mu+CXYZ-FZ^2-BWX\Th-F\Th^2,\\
0={}&-A+B+C+BCDEF\mu-AW^2+BW^2-AY^2+CY^2+DXYZ\\
&\quad-AZ^2+BZ^2-AY^2Z^2-DWX\Th+2AWYZ\Th-A\Th^2+C\Th^2-AW^2\Th^2,\\
0={}&A+B-C+ABDEF\mu+BX^2-CX^2+AY^2-CY^2-CX^2Y^2+FXYZ\\
&\quad-EWX\Th+A\Th^2-C\Th^2-CX^2\Th^2.
\end{align*}
The five off-diagonal equations are
\begin{align*}
0={}&-ADWX-CEWX-BFWX+A^2WYZ-A^2\Th+2AC\Th-C^2\Th-F^2\Th\\
&\quad-A^2W^2\Th-C^2X^2\Th,\\
0={}&ADXY+BEXY+CFXY-A^2Z+2ABZ-B^2Z-F^2Z-B^2X^2Z\\
&\quad-A^2Y^2Z+A^2WY\Th,\\
0={}&-A^2Y+2ACY-C^2Y-E^2Y-C^2X^2Y+ADXZ+BEXZ+CFXZ\\
&\quad-A^2YZ^2+A^2WZ\Th,\\
0={}&-A^2W+2ABW-B^2W-E^2W-B^2WX^2-ADX\Th-CEX\Th-BFX\Th\\
&\quad+A^2YZ\Th-A^2W\Th^2,\\
0={}&-B^2X+2BCX-C^2X-D^2X-B^2W^2X-C^2XY^2+ADYZ+BEYZ+CFYZ\\
&\quad-B^2XZ^2-ADW\Th-CEW\Th-BFW\Th-C^2X\Th^2.
\end{align*}
Collecting the last five equations with respect to $Y,Z,W,\Th,X$ gives precisely \eqref{eq:z2-pair-eqs} and \eqref{eq:z2-X-eq}.

\end{document}